\let\csname equation*\endcsname\relax
\let\csname endequation*\endcsname\relax
\renewcommand{\epsilon}{\varepsilon}
\newcommand{\CC}{\ensuremath{\mathbb{C}}}
\newcommand{\dd}{\ensuremath{\;\mathrm{d}}}
\newcommand{\loc}{\ensuremath{\mathrm{loc}}}
\newcommand{\R}{\ensuremath{\mathbb{R}}}
\newcommand{\tx}{\ensuremath{\underline{x}}}
\newcommand{\J}{\ensuremath{\mathcal{J}_{\R^{d-1}}}}
\newcommand{\JRand}{\ensuremath{\mathcal{J}}}
\renewcommand*{\Re}{\ensuremath{\mathrm{Re}~}}
\renewcommand*{\Im}{\ensuremath{\mathrm{Im}~}}
\renewcommand*{\tilde}{\widetilde}
\newcommand\ii{\ensuremath{\mathrm i}}
\newcommand\N{\ensuremath{\mathbb N}}
\newcommand\Z{\ensuremath{\mathbb Z}}
\renewcommand*{\hat}{\widehat}
\newtheorem{theorem}{Theorem}
\newtheorem{lemma}[theorem]{Lemma}
\newtheorem{corollary}[theorem]{Corollary}
\newtheorem{definition}[theorem]{Definition}
\newtheorem{assumption}{Assumption}
\newtheorem{problem}{Problem}
\newtheorem{remark}[theorem]{Remark}
\newtheorem{proposition}[theorem]{Proposition}
\begin{document}

\title[Reconstruction of Perturbation in Inhomogeneous Periodic Layer]{Reconstruction of a  Local Perturbation in Inhomogeneous Periodic Layers from Partial Near Field Measurements}

\author{Alexander Konschin$^1$ and Armin Lechleiter}

\address{$^1$ RTG 2224 ``Parameter Identification - Analysis, Algorithms, Applications'' and Center for Industrial Mathematics, University of Bremen, Germany}
\ead{alexk@uni-bremen.de}
\vspace{10pt}
\begin{indented}
\item[]\today
\end{indented}

\begin{abstract}
    We consider the inverse scattering problem to reconstruct a local perturbation of a given inhomogeneous periodic layer in $\R^d$, $d=2,3$, using near field measurements of the scattered wave on an open set of the boundary above the medium, or, the measurements of the full wave in some area. The appearance of the perturbation prevents the reduction of the problem to one periodic cell, such that classical methods are not applicable and the problem becomes more challenging. 		
    We first show the equivalence of the direct scattering problem, modeled by the Helmholtz equation formulated on an unbounded domain, to a family of quasi-periodic problems on a bounded domain, for which we can apply some classical results to provide unique existence of the solution to the scattering problem.
    The reformulation of the problem is also the key idea for the numerical algorithm to approximate the solution, which we will describe in more detail. Moreover, we characterize the smoothness of the Bloch-Floquet transformed solution of the perturbed problem w.r.t.\ the quasi-periodicity to improve the convergence rate of the numerical approximation.
    Afterward, we define two measurement operators, which map the perturbation to some measurement data, and show uniqueness results for the inverse problems, and the ill-posedness of these. 
    Finally, we provide numerical examples for the direct problem solver as well as examples of the reconstruction in 2D and 3D.
\end{abstract}

	\section{Introduction}
	{
        
        The growing industrial interest for micro or nano-structured materials and the resulting challenge to construct an automated non-destructing testing method for the structures is one of the fundamental motivations to study perturbed periodic scattering problems.
        The direct and inverse scattering problems from unbounded periodic structures is a well-established topic in mathematics, especially if one considers quasi-periodic incident fields. This assumption allows to reduce the problem on the infinite periodic domain into one periodic cell, such that standard techniques for the existence theory and the standard numerical methods for bounded domains can be applied (see, e.g., \cite{Bonne1994}, \cite{Dobson1992}, \cite{Abboud1992}, \cite{BaoDobsomCox95}, \cite{Bao1994}, \cite{Bao1995}, \cite{Kirsc1993}, \cite{Kirsc1995a}). If the periodicity is perturbed, or, one uses non-periodic incident fields, such as Gaussian beams, the reduction is typically impossible and one has to treat the problem as a scattering problem for an unbounded rough layer 
        (see, e.g., \cite{Hadda2011}, \cite{HuLiuQuZhang2015}, \cite{ArensMeier2000}). The disadvantage is that for the existence theory one has to assume more regularity for the parameter, which we can avoid by considering the periodicity of the unperturbed parameter and applying the Bloch-Floquet transform to the variational problem to get an alternative problem. There are, however, some approaches for problems on locally perturbed periodic waveguides based on the Bloch-Floquet transform, see \cite{Joly2006}, \cite{Fliss2015}, \cite{Ehrhardt2009}.
        
        In this paper, we study the scattering problem formulated in the upper half space $\R_+^d := \{ x \in \R^d : x_d > 0\}$, $d=2,3$,
        \begin{align*}
            \Delta u + k^2 n^2 u
            &= -f
            &&\text{ in }
            \R_+^d
            ,
            \\
            u
            &= 0
            &&\text{ on }
            \{x_d = 0\}
            ,
        \end{align*}
        for a locally perturbed inhomogeneous layer, which is described by the refractive index $n^2 \in L^\infty(\R_+^d)$. Applying the Bloch-Floquet transform to decompose the (non-periodic) incident field into its quasi-periodic components, we can reformulate the scattering problem as a family of quasi-periodic scattering problems on a bounded domain.
        We show equivalence of the two problems and consider the latter to prove existence of the solution to the scattering problem by applying Fredholm theory for the reduced problem. 
        Moreover, we stay in the framework of the equivalent formulation to introduce a numerical method to approximate the solution to the original problem, which is based on \cite{LechleiterZhang2017} and \cite{Zhang2018}, where the algorithm for the sound-soft scattering layer is developed. Considering the regularity of the transformed solution w.r.t.\ the quasi-periodicity, we are able to improve the convergence rate of the inverse Bloch transform, approximated by the trapezoidal rule, and by choosing an adequate variable transform. For the implementation of the direct problem solver, we use the Finite-Element-Method library \emph{deal.II} (\cite{dealII85}). 
        The drawback of this method is that one needs to be able to compute analytically, or numerically, the transformed function of the incident wave. At least for incident point sources and Herglotz wave functions, which are models for Gaussian beams, some semi-analytic expressions are available in \cite{Lechl2015e}.
        
        In the second part, we consider the inverse scattering problem to reconstruct the local perturbation by analyzing the measurement operator $\Lambda : \mathcal{D}(\Lambda) \to \mathcal{L}(L^2(\Omega_0^{R_0}), L^2(\Omega^R_0))$, where $\Omega^R_0$ will be later defined  as one periodic cell  for $R >0$. The operator maps the perturbation $q$ to the solution operator dependent on $q$, which maps right hand sides in $L^2$ supported in $\Omega_0^{R_0}$ to scattered waves restricted to $\Omega^R_0$. 
        Furthermore, we consider the second measurement operator $\mathcal{S}: \mathcal{D}(\mathcal{S}) \to \mathcal{L}(L^2(\Omega_0^{R_0}), L^2(\partial \Omega^R_0 \cap \{x_d = R\}))$ mapping the perturbation to the operator, which maps $L^2(\Omega_0^{R_0})$ right hand sides to the upper trace of the scattered field, also restricted to one periodic cell. 
        We show injectivity of $\mathcal{S}$ in the case that $d=3$, as long as the parameters are twice differentiable, and the whole trace on $\Gamma^R$ is given as data, considering the \emph{complex geometrical optics} (see, e.g., \cite{SylvesterUhlmann1988}). In addition, we show the injectivity of $\Lambda$ (without these restrictions). Moreover, we compute the Fr\'echet derivative of these operators and show that the Fr\'echet derivative is a compact operator and the so-called \emph{tangential cone condition} is satisfied by these operators, such that both inverse problems are locally ill-posed as well as the inverse problem for their linearizations.
        To show some numerical examples, we use the inexact Newton method {CG-REGINN} (\cite{Rieder2005}), to reconstruct the perturbation from artificially generated noisy data.
        
        The Bloch-Floquet transform is a well-known approach in electrical engineering, which is called the \emph{array scanning method}, see, e.g., \cite{MunkBurrell1979}, \cite{Valerio2008}. 
        Nevertheless, the consideration of applying the transform to scattering problems was given just recently by constructing a numerical scheme and analyzing error bounds for the acoustic and electromagnetic scattering problem in the case of sound-soft boundary conditions (see \cite{LechleiterZhang2017}, \cite{Zhang2018}, \cite{LechleiterZhang2017b}). Moreover, in \cite{Hadda2016} the acoustic scattering problem for an inhomogeneous layer was studied by applying the Bloch-Floquet transform and considering integral equations. The setting of the direct problem is close to the one in this paper, but with the somewhat easier assumption of a wave number with a positive imaginary part.

        The remainder of this paper is structured as follows. In  \Cref{Sec21} we consider the direct problem, for which we present the setting of the scattering problem corresponding to the locally perturbed periodic layer. We use the Bloch-Floquet transform to show unique existence of the solution for the unperturbed case in  \Cref{Sec22}  and consider the perturbed layer problem in  \Cref{Sec23}.
        In \Cref{Sec3} we analyze the inverse problem by defining a suitable parameter space, defining the parameter-to-state map, calculating the Fr\'echet derivative and show the ill-posedness as well as the uniqueness for the inverse problems. 
        In the last two sections, we introduce the numerical method for the direct and inverse problem in \Cref{Sec4} and show some numerical examples in \Cref{Sec5}.
	}

	\section{Direct Scattering Problem}\label{Sec2}
    {
   		In this section we formulate the scattering problem for a perturbed periodic layer and prove unique existence of the scattered field. For that, we use the Bloch-Floquet transform to reduce the problem to a family of quasi-periodic problems on a bounded domain. 
	}
	\subsection{Formulation of the problem}\label{Sec21}
    {
        Suppose $n_{p}^2 \in L^\infty(\R_+^d)$, $d = 2,3$, is a $L$-periodic refractive index in $\tx := (x_1, \ldots, x_{d-1})$, which satisfies $n_p^2 = 1$ for $x_d > R_0 > 0$ and characterizes the unperturbed scattering layer. 
        To simplify the notation, we assume that $L$ equals to the scaled identity matrix $2\pi I_{d-1} \in \R^{ (d-1)^2}$ and that the local perturbation $q \in L^\infty(\R_+^{d})$ has the support in $\Omega_0^R := (-\pi, \pi)^{d-1} \times (0, R)$ for $R > R_0$, such that we consider the perturbed refractive index $n^2 := n^2_p + q$.
        Define for $R \geq 0$ the sets
        \begin{align*}
            \Omega^R &:= \R^{d-1} \times (0, R)
            ,
            &\Gamma^R &:= \R^{d-1} \times \{R\}
            ,
            \\
            \Gamma_0^R &:= (-\pi, \pi)^{d-1} \times \{R\}
            \text{ and }
            &I_{} &:= (\nicefrac{-1}{2}, \nicefrac{1}{2})^{d-1}
            .
        \end{align*}
        
        The scattering problem is to find the scattered field $u \in H_{0,\loc}^1(\R_+^d) \cap H^1(\Omega^R)$ for every $R > R_0$, such that
        \[
            \Delta u + k^2 n^2 u 
            = -f
            \text{ in }
            \R_+^d
            ,
            \quad
            u = 0
            \text{ on }
            \Gamma^0
            .
        \]
        Moreover, the scattering field is assumed to satisfy the so-called angular spectrum representation
        \begin{equation}\label{eq_radiationCond}
            u(x)
            := \frac{1}{(2\pi)^{\nicefrac{(d-1)}{2}}} \int_{\R^{d-1}} e^{\ii \tx \cdot \xi + \ii \sqrt{k^2 - |\xi|^2} (x_d - R)} \hat{u}(\xi, R) \dd \xi
            \quad \text{for }
            x_d > R
            ,
        \end{equation}
        where $\hat{u}$ is the Fourier transform of $u\big|_{\Gamma^{R}}$ and the square root is extend by a branch cut at the negative imaginary axis. As a consequence, we can define the exterior Dirichlet-to-Neumann map $T$ as
        \begin{equation}\label{eq_DtN1}
            \frac{\partial u}{\partial x_d}(\tx,  R)
            = \frac{1}{(2\pi)^{\nicefrac{(d-1)}{2}}} \int_{\R^{d-1}} \ii \sqrt{k^2-|\xi|^2} e^{\ii \tx \cdot \xi} \hat{u}(\xi, R) \dd \xi 
            =: T(u\big|_{\Gamma^R})(\tx)
            ,
        \end{equation}
        which is a bounded linear operator from $H^{\nicefrac{1}{2}}(\Gamma^R)$ to $H^{\nicefrac{-1}{2}}(\Gamma^R)$.
        
        The analysis is easily extendable to the setting of free space scattering problem, assuming that the scattered field satisfies the angular spectrum representation in both directions. From now on, we call the space of $H^1(\Omega^R)$-functions with vanishing trace on $\Gamma^0$ as $\tilde{H}^1(\Omega^R)$ and we consider an arbitrary function $f \in L^2(\Omega^R)$, thus, the variational formulation is to 
        \\
        \begin{problem}\label{prob_Var1}
            Find a function  $u \in \tilde{H}^1(\Omega^R)$, such that
            \begin{equation}\label{eq_Var1}
                a_q(u, v)
                := \int_{\Omega^R} \nabla u \cdot \nabla \overline{v} - k^2 n^2 u \overline{v} \dd x - \int_{\Gamma^R} T(u \big|_{\Gamma^R}) \overline{v} \dd S
                = \int_{\Omega^R} f \overline{v} \dd x
            \end{equation}
            for all $v \in \tilde{H}^1(\Omega^R)$, where $n^2 = n_p^2+q \in L^\infty(\Omega^R)$.
        \end{problem}
        
        Since for real wave numbers $k$ and for a real refractive index some surface waves can exist, we assume a small area of absorption.
        
        \begin{assumption}\label{assumption_absorption}
        	The set $\{\Im n_p^2 > 0\}$ is not empty and contains an open subset. Moreover, it holds $\Im n_p^2 \geq 0$ and $\Im q \geq 0$.
        \end{assumption}
        
        The main result for this section is to prove unique existence of the scattered field.
        \begin{theorem}\label{thm_whole}
            If the \Cref{assumption_absorption} holds, the variational problem \ref{prob_Var1} has a unique solution.
        \end{theorem}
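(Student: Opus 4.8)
I would split the proof into a uniqueness part, which can be read off directly from the variational form on the slab $\Omega^R$ by exploiting the absorption, and an existence part, which requires the Bloch–Floquet reduction to a bounded cell where Fredholm theory becomes available.

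\emph{Uniqueness.} Suppose $u\in\tilde H^1(\Omega^R)$ solves Problem~\ref{prob_Var1} with $f=0$ and test \eqref{eq_Var1} with $v=u$. As $\int_{\Omega^R}|\nabla u|^2\dd x$ is real, taking imaginary parts leaves
\[
    k^2\int_{\Omega^R}\Im(n^2)\,|u|^2\dd x \;+\; \Im\!\int_{\Gamma^R} T(u|_{\Gamma^R})\,\overline{u}\dd S \;=\;0 .
\]
By Parseval's identity and \eqref{eq_DtN1} the boundary term equals $\int_{|\xi|<k}\sqrt{k^2-|\xi|^2}\,|\hat u(\xi,R)|^2\dd\xi\ge 0$, since the modes with $|\xi|>k$ contribute a real quantity owing to the branch cut on the negative imaginary axis; moreover $\Im(n^2)=\Im(n_p^2)+\Im(q)\ge0$ by \Cref{assumption_absorption}. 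Hence both terms vanish, and in particular $\int_{\Omega^R}\Im(n_p^2)\,|u|^2\dd x=0$. Because $\Im(n_p^2)$ is $L$-periodic and, by \Cref{assumption_absorption}, strictly positive on some open set, it is strictly positive on an open subset $\mathcal O\subset\Omega^R$ (using $R>R_0$ and horizontal periodicity), so $u\equiv0$ on $\mathcal O$. Since $u$ satisfies $\Delta u+k^2n^2u=0$ in the distributional sense on the connected set $\Omega^R$ with $n^2\in L^\infty$, the unique continuation principle yields $u\equiv0$ on $\Omega^R$, and then \eqref{eq_radiationCond} extends this to $x_d>R$. This proves uniqueness.

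\emph{Existence.} Since $\tilde H^1(\Omega^R)\hookrightarrow L^2(\Omega^R)$ is not compact, I would not apply Fredholm theory directly to \eqref{eq_Var1}; instead I pass to the Bloch–Floquet transform $\J$ in $\tx$, which (as developed in \Cref{Sec22} and \Cref{Sec23}) is an isomorphism from $\tilde H^1(\Omega^R)$ onto an $L^2$-space of families $L^2(I;\tilde H^1_\alpha(\Omega_0^R))$ of quasi-periodic functions on the single cell $\Omega_0^R$, and which turns \eqref{eq_Var1} into an equivalent variational equation whose sesquilinear form splits as $\mathcal B_0-k^2\mathcal Q$. Here $\mathcal B_0(w,\psi)=\int_I b_\alpha\big(w(\alpha),\psi(\alpha)\big)\dd\alpha$ is the direct integral over $\alpha\in I$ of the quasi-periodic forms on $\Omega_0^R$ built from $n_p^2$ together with the quasi-periodic Dirichlet-to-Neumann maps $T_\alpha$, and $\mathcal Q$ is the coupling produced by the perturbation. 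Because $\supp q\subset\Omega_0^R$ lies in one cell, $q\,u$ restricted to $\Omega_0^R$ depends on $w$ only through $\int_I w(\alpha)\dd\alpha\in H^1(\Omega_0^R)$ (the inversion formula for $\J$ on the reference cell), so $\mathcal Q$ factors through multiplication by $q$ composed with the \emph{compact} embedding $H^1(\Omega_0^R)\hookrightarrow L^2(\Omega_0^R)$, and hence induces a compact operator from $L^2(I;\tilde H^1_\alpha(\Omega_0^R))$ to its dual.

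It then remains to invert $\mathcal B_0$. After the customary conjugation by $e^{\ii\alpha\cdot\tx}$, each fiber form $b_\alpha$ lives on the bounded domain $\Omega_0^R$ and satisfies a Gårding inequality (coercive gradient part, plus compact and finite-rank lower-order terms coming from $n_p^2$ and $T_\alpha$), and it is injective: the imaginary-part computation of the uniqueness step applies verbatim to a quasi-periodic solution, and unique continuation within the cell then forces it to vanish. Thus each $b_\alpha$ is boundedly invertible, and since $\alpha\mapsto b_\alpha$ is continuous on the compact closure of $I$ we get $\sup_\alpha\|b_\alpha^{-1}\|<\infty$, so $\mathcal B_0$ is boundedly invertible on $L^2(I;\tilde H^1_\alpha(\Omega_0^R))$. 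Writing $\mathcal B_0-k^2\mathcal Q=\mathcal B_0\big(\id-k^2\mathcal B_0^{-1}\mathcal Q\big)$ with $\mathcal B_0^{-1}\mathcal Q$ compact, the Riesz–Fredholm alternative reduces solvability to injectivity of $\mathcal B_0-k^2\mathcal Q$, which follows from its equivalence with Problem~\ref{prob_Var1} and the uniqueness already shown. Hence the transformed problem, and with it Problem~\ref{prob_Var1}, has a unique solution. I expect the main obstacle to be the Bloch–Floquet reduction itself: proving the exact equivalence of \eqref{eq_Var1} with the $\alpha$-family formulation, identifying the precise form of the coupling operator $\mathcal Q$ and verifying its compactness, and — most importantly — establishing the \emph{uniform-in-$\alpha$} bounded invertibility of $\mathcal B_0$. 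The absorption in \Cref{assumption_absorption} is exactly what makes this route work: it yields uniqueness both for the full problem and for every fiber problem without invoking any Rellich-type radiation argument, which in turn unlocks existence via Fredholm.
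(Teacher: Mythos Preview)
Your proposal is correct and follows essentially the same route as the paper: invertibility of the unperturbed form via the Bloch--Floquet reduction to quasi-periodic cell problems (G{\aa}rding plus absorption-based injectivity, with continuity in $\alpha$ over $\overline{I}$ giving the uniform bound), compactness of the $q$-term because $\supp q\subset\Omega_0^{R_0}$ forces it to factor through the compact embedding $H^1(\Omega_0^R)\hookrightarrow L^2(\Omega_0^R)$, and then the Fredholm alternative together with uniqueness from the imaginary-part identity and unique continuation. The only cosmetic differences are that the paper phrases the compact perturbation directly on $\tilde H^1(\Omega^R)$ rather than in the transformed space, and carries out the uniqueness computation partly in the transformed variables; neither changes the substance of the argument.
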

        To prove the theorem, we consider the quasi-periodic problem first.
		\begin{figure}[!ht]
			\center
            \includegraphics[width=0.94\textwidth]{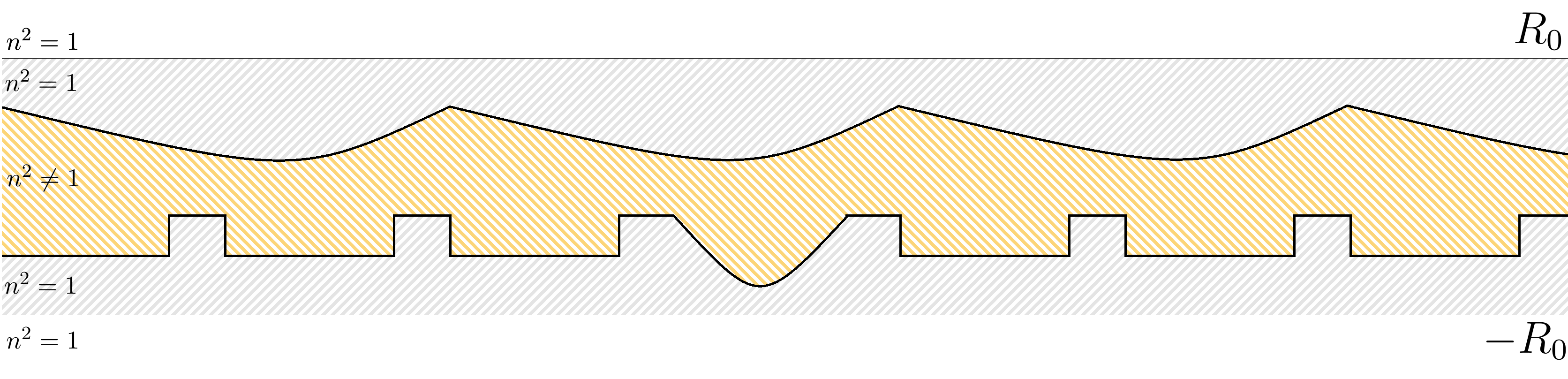}
			\caption{Example for the refractive index $n^2 = n^2_p + q$.}
			\label{image_setting}
		\end{figure}
    }

	\subsection{Quasi-periodic inhomogeneous layer scattering}\label{Sec22}
    {
        In this subsection we will be concerned with the quasi-periodic scattering problem and show the equivalence of the variational problem \ref{prob_Var1} to a family of quasi-periodic problems applying the Bloch-Floquet transform. For that, we treat the case that there is no perturbation at first, that means that $q=0$ and $n^2 = n^2_p$.
        A function is called $\alpha$-quasi-periodic with $\alpha \in \R^{d-1}$ and period $2 \pi$, if
        \[
            u(\tx + 2 \pi j, x_d)
            = e^{-2 \pi \ii \alpha \cdot j} u(\tx, x_d)
            \quad\text{for all }
            j \in \Z^{d-1}
            .
        \]
        For smooth functions $\phi \in C^\infty_0(\overline{\Omega^R})$, the horizontal Bloch-Floquet transform $\J$ is defined by
        \[
            \J \phi (\alpha, \tx, x_d)
            :=  \sum_{j \in \Z^{d-1}} \phi (\tx + 2 \pi j, x_d) e^{2 \pi \ii \alpha \cdot j}
            .
        \]
        Recall the spaces $H^s_\alpha(\Omega_0^R)$ and $H^s_\alpha(\Gamma_0^R)$ of $\alpha$-quasi-periodic Sobolev functions, and set $\tilde{H}^s_\alpha(\Omega_0^R)$ as the subspace of functions $u \in H^s_\alpha(\Omega_0^R)$, such that $u \big|_{\Gamma_0^0} = 0$. The Bloch-Floquet transform extends for $s \in \R$ to an isomorphism between $\tilde{H}^s(\Omega^R)$ and $L^2(I_{}; \tilde{H}^s_\alpha(\Omega_0^R))$ as well as between ${H}^s(\Gamma^R)$ and $L^2(I_{}; {H}^s_\alpha(\Gamma_0^R))$, where the index $\alpha$ indicates that the space depends on $\alpha \in I_{}$ (see \cite{Lechl2016}).
        The inverse of the transform is given by
        \[
            \J^{-1} w
            (\tx + 2 \pi j, x_d)
            =  \int_{I_{}} w(\alpha, \tx, x_d) e^{-2 \pi \ii \alpha \cdot j}
            \dd \alpha
            ,
            \ 
            x \in \Omega_0^R
            ,
            \ 
            j \in \Z^{d-1}
            .
        \]
        
        The scattered field $u_{\alpha} \in \tilde{H}^s_\alpha(\Omega_0^R)$ of the quasi-periodic scattering problem should satisfy the Rayleigh radiation condition
        \begin{equation}\label{eq_alphaRadiationCond}
            u_{\alpha}(\tx, x_d)
            := \sum_{j \in \Z^{d-1}} \widehat{(u_{\alpha}\big|_{\Gamma_0^R})}_j e^{-\ii \alpha_j \cdot \tx + \ii\beta_j(x_d - R)}
            \quad\text{for }
            x_d > R
            ,
        \end{equation}
        where $\widehat{(u_{\alpha}\big|_{\Gamma_0^R})}_j$ is the $j$-th Fourier coefficient of the trace. For $\phi \in H^s_\alpha(\Gamma_0^R)$, $ s \in \R$, $j \in \Z^{d-1}$ and $\alpha \in I_{}$, the $j$-th Fourier coefficient $\hat{\phi}_j(\alpha)$ of $\phi$ is defined by
        \begin{equation}
            \hat{\phi}_j(\alpha)
            := \int_{\Gamma_0^R} \phi(\tx) e^{\ii\alpha \cdot \tx} \overline{\psi^j(\tx) }  \dd \tx
            ,
            \quad 
            \text{where}
            \quad 
            \psi^j(\tx) 
            := \frac{1}{(2\pi)^{\nicefrac{(d-1)}{2}}} e^{-\ii j \cdot \tx}
            .
        \end{equation}
        From the radiation condition, we derive
        the bounded quasi-periodic Dirichlet-to-Neumann operator
        $T_\alpha: H_\alpha^{\nicefrac{1}{2}}(\Gamma_0^R) \to H_\alpha^{\nicefrac{-1}{2}}(\Gamma_0^R)$ for $\phi = \sum_{j \in \Z^{d-1}} \hat{\phi}_j(\alpha) e^{-\ii \alpha \cdot \tx} \psi^j(\tx) $ by
        \[
            T_\alpha(\phi)(\tx)
            = \frac{\ii}{(2\pi)^{\nicefrac{(d-1)}{2}}}  \sum_{j \in \Z^{d-1}} \sqrt{k^2 - |\alpha + j |^2} \hat{\phi}_j(\alpha) e^{-\ii (\alpha + j) \cdot \tx}
            .
        \]
        \begin{theorem}\label{thm_equivalence}
            Set $\mathcal{J} := \J$. A function $u \in \tilde{H}^1(\Omega^R)$ solves \Cref{prob_Var1} for $q=0$ if and only if $\mathcal{J} u \in L^2(I_{}; \tilde{H}^1_{\alpha}(\Omega_0^R))$ solves the transformed variational problem
            \begin{align}\label{eq_VarUnterIntegral}
				&\int_{I_{}} \left[ \int_{\Omega_0^R} \nabla_x \mathcal{J} u \cdot \nabla_x \overline{v} - k^2 n_p^2 \mathcal{J} u \overline{v}
				\dd x
				- \int_{\Gamma_0^R} T_\alpha(\mathcal{J} u \big|_{\Gamma_0^R}) \overline{v} \big|_{\Gamma_0^R} \dd S \right] \dd \alpha
				\\
                \nonumber
                &
                =  \int_{I_{}} \int_{\Omega_0^R} (\mathcal{J} f)(\alpha, \cdot) \overline{v} \dd x \dd \alpha
			\end{align}
			for all $v \in L^2(I_{}; \tilde{H}^1_{\alpha}(\Omega_0^R))$. Furthermore, the radiation conditions \eqref{eq_radiationCond} and \eqref{eq_alphaRadiationCond} are equivalent for the corresponding problem.
        \end{theorem}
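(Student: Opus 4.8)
The plan is to exploit the interplay of the Bloch--Floquet transform $\mathcal{J}$ with the operations that make up the sesquilinear form in \eqref{eq_Var1}. First I would record the following structural identities, each checked directly on the dense class $C^\infty_0(\overline{\Omega^R})$ (respectively $C^\infty_0(\overline{\Gamma^R})$) and then extended to the relevant Sobolev spaces by the isomorphism property recalled above: $\mathcal{J}(\partial_{x_\ell}\phi)=\partial_{x_\ell}(\mathcal{J}\phi)$ for every $\ell$, since $\mathcal{J}$ is a superposition of translations in $\tx$; $\mathcal{J}(n_p^2\,\phi)=n_p^2\,\mathcal{J}\phi$, which uses the $2\pi$-periodicity of $n_p^2$ in $\tx$ and is the only place where the hypothesis $q=0$ enters; $\mathcal{J}(\phi|_{\Gamma^R})=(\mathcal{J}\phi)|_{\Gamma_0^R}$ and $\mathcal{J}(\phi|_{\Gamma^0})=(\mathcal{J}\phi)|_{\Gamma_0^0}$, because restriction in the $x_d$-variable commutes with $\mathcal{J}$ (hence $\mathcal{J}$ maps $\tilde{H}^1(\Omega^R)$ onto $L^2(I;\tilde{H}^1_\alpha(\Omega_0^R))$); and the Plancherel-type pairing formula $\int_{\Omega^R}\phi\,\overline{\psi}\dd x=\int_I\int_{\Omega_0^R}\mathcal{J}\phi\,\overline{\mathcal{J}\psi}\dd x\dd\alpha$, with its counterpart for the $H^{1/2}$--$H^{-1/2}$ duality on $\Gamma^R$ versus $\int_I\langle\,\cdot\,,\,\cdot\,\rangle_{\Gamma_0^R}\dd\alpha$.

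Second I would establish the intertwining of the Dirichlet-to-Neumann maps, $\mathcal{J}(T\psi)(\alpha,\cdot)=T_\alpha\big(\mathcal{J}\psi(\alpha,\cdot)\big)$ for $\psi\in H^{1/2}(\Gamma^R)$. Writing the Fourier variable on $\R^{d-1}$ uniquely as $\xi=\alpha+j$ with $\alpha\in I$ and $j\in\Z^{d-1}$, a short computation (change of variables $\tx\mapsto\tx+2\pi j$, together with the Poisson-type identity $\sum_{m\in\Z^{d-1}}e^{2\pi\ii m\cdot(\xi+\alpha)}=\sum_{\ell\in\Z^{d-1}}\delta(\xi+\alpha-\ell)$) identifies $\hat{\psi}(\alpha+j,R)$ with the $j$-th Fourier coefficient $\widehat{(\mathcal{J}\psi(\alpha,\cdot)|_{\Gamma_0^R})}_j(\alpha)$ of the $\alpha$-slice, up to the fixed normalisation and possibly a reflection $j\mapsto-j$ from the sign conventions in \eqref{eq_radiationCond}, which is immaterial because $\xi\mapsto\sqrt{k^2-|\xi|^2}$ is even. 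Since $T$ acts as the Fourier multiplier $\ii\sqrt{k^2-|\xi|^2}$ and $T_\alpha$ as the Fourier-series multiplier $\ii\sqrt{k^2-|\alpha+j|^2}$ with the same branch of the square root, the two are carried into one another; in particular the boundedness $T:H^{1/2}(\Gamma^R)\to H^{-1/2}(\Gamma^R)$ corresponds to the uniform-in-$\alpha$ boundedness of $T_\alpha:H^{1/2}_\alpha(\Gamma_0^R)\to H^{-1/2}_\alpha(\Gamma_0^R)$.

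Granting these facts, the equivalence of the variational problems is then immediate. For $u\in\tilde{H}^1(\Omega^R)$ and an arbitrary test function $w\in\tilde{H}^1(\Omega^R)$, apply $\mathcal{J}$ to \eqref{eq_Var1} with $q=0$: the Plancherel formula turns each of the three terms on the left and the term on the right into an integral over $\alpha$, and the identities above turn the integrand at fixed $\alpha$ into precisely the bracket of \eqref{eq_VarUnterIntegral} evaluated at $\mathcal{J}u(\alpha,\cdot)$ and $v(\alpha,\cdot):=\mathcal{J}w(\alpha,\cdot)$, the right-hand side becoming $\int_I\int_{\Omega_0^R}(\mathcal{J}f)(\alpha,\cdot)\,\overline{\mathcal{J}w}\dd x\dd\alpha$. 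Since $w\mapsto\mathcal{J}w$ is a bijection onto $L^2(I;\tilde{H}^1_\alpha(\Omega_0^R))$, testing against all admissible $w$ is the same as testing against all admissible $v$, so $u$ solves \Cref{prob_Var1} with $q=0$ if and only if $\mathcal{J}u$ solves \eqref{eq_VarUnterIntegral}; the converse implication is obtained by applying $\mathcal{J}^{-1}$, which is again an isomorphism between the two spaces. For the radiation conditions I would apply $\mathcal{J}$ in the variable $\tx$ to \eqref{eq_radiationCond}, split $\int_{\R^{d-1}}=\sum_{j\in\Z^{d-1}}\int_{I+j}$ (equivalently, use the Poisson-type identity above), and exploit the same identification of $\hat{u}(\alpha+j,R)$ with the Fourier coefficients of the Bloch slice to see that $\mathcal{J}u(\alpha,\tx,x_d)$ for $x_d>R$ is exactly of the form \eqref{eq_alphaRadiationCond} with $\beta_j=\sqrt{k^2-|\alpha+j|^2}$; reversing the steps with $\mathcal{J}^{-1}$ gives the other direction.

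The main obstacle I anticipate is making the last two steps rigorous at the level of Sobolev regularity: the manipulations with the Poisson-type identity and with the Fourier multipliers should first be carried out on a dense class of Schwartz-type data, where the interchange of $\sum_j$ with $\int_{\R^{d-1}}$ is legitimate and the Rayleigh series converges absolutely, the branch cut of $\sqrt{k^2-|\xi|^2}$ being tracked uniformly across the decomposition $\xi=\alpha+j$ (only finitely many $j$ yield propagating, real-exponent modes, so \eqref{eq_alphaRadiationCond} converges in $H^1_\alpha$ above $\Gamma_0^R$); one then passes to the limit using continuity on $H^{\pm 1/2}$ of all operators involved and the isomorphism property of $\mathcal{J}$.
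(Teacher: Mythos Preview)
Your proposal is correct and follows essentially the same route as the paper: the volume terms are handled via the isometry/adjoint property of $\mathcal{J}$ together with its commutation with weak derivatives and with multiplication by the periodic $n_p^2$, and the boundary term and radiation condition via the identification $\mathcal{F}(\gamma_{\Gamma^R} u)(\alpha+j)=\widehat{(\gamma_{\Gamma_0^R}u_\alpha)}_j(\alpha)$. The only cosmetic difference is that the paper packages this last identification through the operator $\tilde{J}:=\mathcal{J}\circ\mathcal{F}^{-1}$, recorded as an isomorphism $L^2_s(\Gamma^R)\to L^2(I;H^s_\alpha(\Gamma_0^R))$, which cleanly bypasses the density/regularity issue you flag at the end since the DtN intertwining then becomes a statement about Fourier multipliers between weighted $L^2$ spaces.
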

        
        \begin{proof}
            Set additionally $u_\alpha := (\J u)(\alpha, \cdot) \in \tilde{H}^1_{\alpha}(\Omega_0^R)$ for  $u \in \tilde{H}^1(\Omega^R)$. 
            From \cite{Lechl2016} we know that the transform is an isomorphism between $\tilde{H}^s(\Omega^R)$ and $L^2(I_{}; \tilde{H}^s_{\alpha}(\Omega_0^R))$ for $s \in \R$, that the adjoint operator  $\mathcal{J}^*$ can be identified with the inverse operator $\mathcal{J}^{-1}$, that one can interchange the transform with weak derivation and that the identity $\mathcal{J} (n^2 w) = n^2 \mathcal{J} w$ holds for every $w \in L^2(\Omega^R)$. Applying these properties, we derive the equivalent sesquilinear form for the volume part as follows:
            \begin{align*}
                \int_{\Omega^R} \nabla u \cdot \nabla \overline{v} - k^2 n^2 u \overline{v}  \dd x 
                &= \int_{\Omega^R}   \nabla u \cdot \overline{\mathcal{J}^{-1} (\mathcal{J}  \nabla v)} - k^2 n^2 u  \overline{\mathcal{J}^{-1} (\mathcal{J} v) }
                 \dd x 
                \\
                &= \int_{\Omega^R}   \nabla u \cdot \overline{\mathcal{J}^{*}(\nabla_x \mathcal{J}   v)} - k^2 n^2 u  \overline{\mathcal{J}^{*}(\mathcal{J} v) }
                 \dd x 
                \\
                &=  \int_{I_{}} \int_{\Omega_0^R} \nabla_x  (\mathcal{J}  u) \cdot \nabla_x (\overline{\mathcal{J}   v}) -  k^2 n^2 (\mathcal{J} u) (\overline{\mathcal{J} v} )
                \dd x  \dd \alpha
                \\
                &=  \int_{I_{}}\int_{\Omega_0^R} \left( \nabla_x {u}_{\alpha} \cdot \nabla_x \overline{{v}}_{\alpha} -  k^2 n^2 {u}_{\alpha} \overline{{v}}_{\alpha}  \right)\dd x  \dd \alpha
                .
            \end{align*}
            The right hand side can be treated analogously. Now we have to show the equivalence on the boundary.
            
            Calling $\gamma_{\Gamma^{R}} : \tilde{H}^1(\Omega^R) \to H^{\nicefrac{1}{2}}(\Gamma^R)$ the trace operator on $\tilde{H}^1(\Omega^R)$ and $\gamma_{\Gamma_0^{R}}  : \tilde{H}_\alpha^1(\Omega_0^R) \to {H}_\alpha^{\nicefrac{1}{2}}(\Gamma_0^{R})$ the trace operator on $\tilde{H}^1_{\alpha}(\Omega_0^R)$, we use the identification of the inverse Bloch-Floquet transform with its adjoint operator to get
            \begin{align*}
                \int_{\Gamma^R} T (\gamma_{\Gamma^{R}} u) \gamma_{\Gamma^{R}}\overline{v} \dd S
                = \int_{I_{}} \int_{\Gamma_0^R} (\mathcal{J} T (\gamma_{\Gamma^{R}} u)) (\alpha, \tx) (\overline{\mathcal{J} \gamma_{\Gamma^{R}} v}) (\alpha, \tx)\dd S(\tx) \dd \alpha
                .
            \end{align*}
            It holds the identity $\gamma_{\Gamma_0^{R}} \mathcal{J} u = \mathcal{J} \gamma_{\Gamma^{R}} u$, such that it remains to show that 
            \[
                T_\alpha  (\mathcal{J} u)(\alpha, \cdot) 
                = (\mathcal{J}  T  u)(\alpha, \cdot)
                \quad\text{for all }
                u \in H^{\nicefrac{1}{2}}(\R^{d-1})
                .
            \]

            We define for every smooth function with compact support $ \phi \in C_0^\infty(\R^{d-1})$ the operator 
            \begin{align*}
                (\tilde{J} \phi)(\alpha, \tx)
                := \sum_{j \in \Z^{d-1}} \phi(\alpha +  j) e^{-\ii \alpha \cdot \tx} \psi^j(\tx)
                ,\
                (\alpha, \tx, R) \in I_{} \times \Gamma_0^R
                ,
            \end{align*}
            which can be written as $\tilde{J} = \mathcal{J} \circ \mathcal{F}^{-1}$, where  $\mathcal{F} $ is the Fourier transform (see \cite{Lechl2016}). This implies, in particular, that $\tilde{J}$ is an isomorphism between the spaces $L^2_s(\Gamma^R)$ and $L^2(I; H^s_\alpha(\Gamma_0^R))$ for $s \in \R$, where $L^2_s(\Gamma^R)$ is the subspace of $L^2(\Gamma^R)$ functions, for which the norm $||\xi \mapsto (1+|\xi|^2)^{s/2} w(\xi) ||_{L^2(\R^{d-1})}$ is finite.
            Putting the operator $\tilde{J}$ into the definition of the Dirichlet-to-Neumann operator $T $, we conclude that
            \begin{equation*}
                \mathcal{J} \circ T   u
                = \mathcal{J} \circ \mathcal{F}^{-1}\left( \xi \mapsto  \ii \sqrt{k^2 - |\xi|^2} \mathcal{F}{ (\gamma_{\Gamma^{R}} u)}(\xi) \right)
                = \tilde{J} \left( \xi \mapsto  \ii \sqrt{k^2 - |\xi|^2} \mathcal{F}{ (\gamma_{\Gamma^{R}} u)}(\xi) \right)
                .
            \end{equation*}
            Since it holds $(\xi \mapsto  \ii \sqrt{k^2 - |\xi|^2} \mathcal{F}{ (\gamma_{\Gamma^{R}} u)}(\xi) ) \in L^2_{s-1}(\R^{d-1})$ and $\mathcal{F}{ (\gamma_{\Gamma^{R}} u)} (\alpha +  j )  = \widehat{(\gamma_{\Gamma_0^{R}} {u}_\alpha)}_j(\alpha)$, we finally obtain the claimed identification.
                
            For the radiation condition, one can use the same identity $\widehat{(\gamma_{\Gamma_0^R} {u}_\alpha)}_j(\alpha)  = \mathcal{F} (\gamma_{\Gamma^R} u) (\alpha + j )$ to directly calculate the equivalence of the radiation conditions.
        \end{proof}

        \begin{theorem}\label{thm_uniquesolution_per}
            If the \Cref{assumption_absorption} holds, then the variational problem \eqref{eq_VarUnterIntegral} is uniquely solvable.
        \end{theorem}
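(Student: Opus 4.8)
The plan is to use that \eqref{eq_VarUnterIntegral}, although written as a single integral over $I$, \emph{decouples} into a family of independent quasi-periodic cell problems parametrised by $\alpha$. Indeed, since the test space $L^2(I;\tilde{H}^1_\alpha(\Omega_0^R))$ contains functions concentrated on arbitrarily small $\alpha$-subsets of $I$, a function $u\in L^2(I;\tilde{H}^1_\alpha(\Omega_0^R))$ solves \eqref{eq_VarUnterIntegral} if and only if, for almost every $\alpha\in I$, its slice $u_\alpha:=u(\alpha,\cdot)\in\tilde{H}^1_\alpha(\Omega_0^R)$ satisfies
\[
  b_\alpha(u_\alpha,w):=\int_{\Omega_0^R}\nabla u_\alpha\cdot\nabla\overline{w}-k^2 n_p^2\,u_\alpha\overline{w}\dd x-\int_{\Gamma_0^R}T_\alpha\big(u_\alpha\big|_{\Gamma_0^R}\big)\,\overline{w}\dd S=\int_{\Omega_0^R}(\mathcal{J}f)(\alpha,\cdot)\,\overline{w}\dd x
\]
for all $w\in\tilde{H}^1_\alpha(\Omega_0^R)$. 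So it suffices to establish (a) that the bounded operator $B_\alpha$ on $\tilde{H}^1_\alpha(\Omega_0^R)$ representing $b_\alpha$ is boundedly invertible for every $\alpha$ in the compact cell $\overline{I}$, and (b) that $\sup_{\alpha\in\overline I}\|B_\alpha^{-1}\|<\infty$ with $\alpha\mapsto B_\alpha^{-1}$ strongly measurable; then $u(\alpha,\cdot):=B_\alpha^{-1}(\mathcal{J}f)(\alpha,\cdot)$ is the unique element of $L^2(I;\tilde{H}^1_\alpha(\Omega_0^R))$ solving \eqref{eq_VarUnterIntegral}, and it depends boundedly on $\mathcal{J}f$.

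For the Fredholm property in (a), fix $\alpha$ and split $T_\alpha=T_\alpha^{\mathrm{ev}}+T_\alpha^{\mathrm{pr}}$ into the contribution of the evanescent modes $|\alpha+j|>k$ and that of the finitely many propagating modes $|\alpha+j|\leq k$. Since the branch cut of the square root lies on the negative imaginary axis, $\sqrt{k^2-|\alpha+j|^2}=\ii\sqrt{|\alpha+j|^2-k^2}$ for the evanescent modes, so $\Re\int_{\Gamma_0^R}T_\alpha^{\mathrm{ev}}(w)\overline{w}\dd S\leq 0$; together with the Poincaré inequality on $\tilde{H}^1_\alpha(\Omega_0^R)$ (valid because of the vanishing trace on $\Gamma_0^0$), the form $w\mapsto\int_{\Omega_0^R}|\nabla w|^2\dd x-\int_{\Gamma_0^R}T_\alpha^{\mathrm{ev}}(w)\overline{w}\dd S$ has real part bounded below by $c\|w\|^2_{H^1(\Omega_0^R)}$ and is therefore coercive. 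The remaining contributions to $b_\alpha$ — the zeroth-order term $-k^2\int_{\Omega_0^R}n_p^2 w\overline{w}\dd x$, which is compact by the Rellich embedding $H^1_\alpha(\Omega_0^R)\hookrightarrow L^2(\Omega_0^R)$, and the finite-rank term $-\int_{\Gamma_0^R}T_\alpha^{\mathrm{pr}}(w)\overline{w}\dd S$ — constitute a compact perturbation, so $B_\alpha$ is Fredholm of index zero and bounded invertibility reduces to injectivity. (Alternatively one may simply invoke the classical well-posedness theory for quasi-periodic inhomogeneous-layer problems, e.g.\ \cite{Bonne1994,Kirsc1993}.)

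Injectivity is exactly where \Cref{assumption_absorption} is needed. Suppose $b_\alpha(u_\alpha,u_\alpha)=0$ and take imaginary parts. A short Parseval computation with the Rayleigh expansion \eqref{eq_alphaRadiationCond} shows $\Im\int_{\Gamma_0^R}T_\alpha(\psi)\overline{\psi}\dd S\geq 0$ for every $\psi$, only the finitely many propagating modes contributing, with coefficient $\sqrt{k^2-|\alpha+j|^2}>0$. Hence $0=\Im b_\alpha(u_\alpha,u_\alpha)=-k^2\int_{\Omega_0^R}\Im n_p^2\,|u_\alpha|^2\dd x-\Im\int_{\Gamma_0^R}T_\alpha\big(u_\alpha\big|_{\Gamma_0^R}\big)\overline{u_\alpha\big|_{\Gamma_0^R}}\dd S$ with both terms nonpositive, which forces $\int_{\Omega_0^R}\Im n_p^2\,|u_\alpha|^2\dd x=0$. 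As $\Im n_p^2\geq 0$ and $\{\Im n_p^2>0\}$ contains an open set which, being $L$-periodic in $\tx$ and vanishing for $x_d>R_0$, has an integer translate lying inside $\Omega_0^R$, we obtain $u_\alpha=0$ on a nonempty open subset of the connected domain $\Omega_0^R$. Since $u_\alpha\in H^1(\Omega_0^R)$ solves $\Delta u_\alpha+k^2 n_p^2 u_\alpha=0$ weakly in $\Omega_0^R$ with $n_p^2\in L^\infty$, the weak unique continuation principle yields $u_\alpha\equiv 0$. The argument is uniform in $\alpha$ and unaffected by the Rayleigh values $|\alpha+j|=k$, so (a) holds for every $\alpha\in\overline I$.

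For (b), transform all the spaces $\tilde{H}^1_\alpha(\Omega_0^R)$ onto the single space $\tilde{H}^1_0(\Omega_0^R)$ by $w\mapsto e^{\ii\alpha\cdot\tx}w$; then $B_\alpha$ becomes an operator on a fixed Hilbert space whose symbol depends norm-continuously on $\alpha$ — the only non-smooth dependence, through $\sqrt{k^2-|\alpha+j|^2}$, is still continuous, and the Fourier tail behaves like $\ii|j|$ uniformly on bounded $\alpha$-sets. Since $B_\alpha$ is invertible for every $\alpha$ and inversion is continuous and stable under small perturbations, $\alpha\mapsto\|B_\alpha^{-1}\|$ is a continuous, everywhere-finite function on the compact set $\overline I$, hence bounded, say by $M$; likewise $\alpha\mapsto B_\alpha^{-1}$ is norm-continuous, so $\alpha\mapsto B_\alpha^{-1}(\mathcal{J}f)(\alpha,\cdot)$ is strongly measurable with norm $\leq M\|(\mathcal{J}f)(\alpha,\cdot)\|$, which produces the unique solution of \eqref{eq_VarUnterIntegral} with $\|u\|\leq M\|\mathcal{J}f\|$. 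The step I expect to be the main obstacle is (b): one must control $B_\alpha$ as $\alpha$ ranges over the whole cell, in particular near the Rayleigh values $|\alpha+j|=k$ where $T_\alpha$ depends only continuously (not smoothly) on $\alpha$, and make sure that invertibility — hence the uniform bound $M$ — genuinely survives there. This is precisely where the absorption hypothesis earns its keep, since without \Cref{assumption_absorption} the norm $\|B_\alpha^{-1}\|$ would in general blow up at guided-mode values of $\alpha$ and the $L^2(I;\cdot)$ problem would fail to be solvable.
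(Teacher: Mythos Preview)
Your proposal is correct and follows essentially the same three-part strategy as the paper: (i) each fixed-$\alpha$ cell problem is Fredholm of index zero (the paper obtains this via a direct G\aa rding inequality rather than your evanescent/propagating splitting of $T_\alpha$, but these are equivalent) and is injective by the absorption-plus-unique-continuation argument; (ii) uniqueness for the integrated problem; (iii) the transformation $w\mapsto e^{\ii\alpha\cdot\tx}w$ to the fixed space $\tilde H^1_p(\Omega_0^R)$, followed by a continuity argument for the resulting sesquilinear form in $\alpha$ on the compact cell $\overline I$, yielding a uniform bound on the solution operators. The only organisational difference is that you derive uniqueness of the integrated problem from your decoupling observation plus pointwise injectivity, whereas the paper proves it by a separate direct imaginary-part argument on the integrated form; and the paper carries out the continuity estimate in (iii) with explicit bounds on $|\beta_j(\alpha_\varepsilon)-\beta_j(\alpha)|/(1+|j|^2)^{1/2}$ where you only sketch it---your identification of this step as the main obstacle is apt, and the paper's computation confirms that continuity (not smoothness) of $\sqrt{k^2-|\alpha+j|^2}$ suffices.
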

        We split the proof into three lemmas.
        
        {
            \begin{lemma}\label{lemma_pktExistence}
                For all $\alpha \in \overline{I_{}}$, there exists a unique solution ${w}_\alpha \in \tilde{H}_\alpha^1(\Omega_0^R)$ to the variational problem
                \begin{align}\label{eq_VarPointwise}
                    a_\alpha(w_\alpha, v) 
                    &:=\int_{\Omega_0^R} \nabla_x w_\alpha \cdot \nabla_x \overline{v} - k^2 n_p^2 w_\alpha \overline{v}
                    \dd x
                    - \int_{\Gamma_0^R} T_\alpha(w_\alpha \big|_{\Gamma_0^R}) \overline{v} \big|_{\Gamma_0^R} \dd S
                    \nonumber
                    \\
                    &=  \int_{\Omega_0^R} (\J f)(\alpha, \cdot) \overline{v} \dd x
                \end{align}
                for every $\overline{v} \in \tilde{H}_\alpha^1(\Omega_0^R)$.
            \end{lemma}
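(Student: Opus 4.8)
The plan is to apply the Riesz--Fredholm theory to the sesquilinear form $a_\alpha$ on $\tilde H^1_\alpha(\Omega_0^R)$: first show that the associated operator is Fredholm of index zero, so that existence reduces to uniqueness, and then prove uniqueness by an imaginary-part (energy) argument combined with the unique continuation principle, exploiting \Cref{assumption_absorption}. This is the classical scheme for quasi-periodic layer problems (cf.\ \cite{Bonne1994}, \cite{Kirsc1993}); the only points that need care are the sign of the Dirichlet-to-Neumann term and the correct use of the absorption hypothesis.

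First I would record that $a_\alpha$ is bounded on $\tilde H^1_\alpha(\Omega_0^R)\times\tilde H^1_\alpha(\Omega_0^R)$: the volume terms are obvious since $n_p^2\in L^\infty$, and the boundary term is controlled because $T_\alpha\colon H^{\nicefrac 12}_\alpha(\Gamma_0^R)\to H^{\nicefrac{-1}{2}}_\alpha(\Gamma_0^R)$ and the trace operator $\gamma_{\Gamma_0^R}$ are bounded. Next I would split $a_\alpha = b_\alpha - c_\alpha$ into a coercive principal part and a compact remainder. Let $T_\alpha^{\mathrm{ev}}$ and $T_\alpha^{\mathrm{prop}}$ denote the parts of $T_\alpha$ supported on the modes $j$ with $|\alpha+j|^2 > k^2$ (evanescent) and $|\alpha+j|^2\le k^2$ (propagating), respectively, and set
\[
b_\alpha(u,v) := \int_{\Omega_0^R}\nabla u\cdot\nabla\overline v + u\overline v\dd x - \int_{\Gamma_0^R} T_\alpha^{\mathrm{ev}}(u|_{\Gamma_0^R})\,\overline v|_{\Gamma_0^R}\dd S,
\]
so that $c_\alpha(u,v) = \int_{\Omega_0^R}(1+k^2 n_p^2)u\overline v\dd x + \int_{\Gamma_0^R}T_\alpha^{\mathrm{prop}}(u|_{\Gamma_0^R})\,\overline v|_{\Gamma_0^R}\dd S$. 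For an evanescent mode, $\ii\sqrt{k^2-|\alpha+j|^2} = -\sqrt{|\alpha+j|^2-k^2}\le 0$, hence $-\Re\langle T_\alpha^{\mathrm{ev}}\phi,\phi\rangle\ge 0$ and $\Re b_\alpha(u,u)\ge \|u\|_{H^1_\alpha(\Omega_0^R)}^2$; so $b_\alpha$ is coercive and the associated operator is an isomorphism. The remainder $c_\alpha$ is associated with a compact operator, the volume part by the compact embedding $H^1_\alpha(\Omega_0^R)\hookrightarrow L^2(\Omega_0^R)$ and the boundary part because $T_\alpha^{\mathrm{prop}}$ has finite rank (only finitely many $j$ satisfy $|\alpha+j|^2\le k^2$). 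Therefore the operator $A_\alpha$ representing $a_\alpha$ is a compact perturbation of an isomorphism, i.e.\ Fredholm of index zero, and the existence of $w_\alpha$ is equivalent to the injectivity of $A_\alpha$.

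For uniqueness, take $w\in\tilde H^1_\alpha(\Omega_0^R)$ with $a_\alpha(w,v)=0$ for all $v$, put $v=w$ and examine the imaginary part. Using the explicit form of $T_\alpha$, $\Im\int_{\Gamma_0^R}T_\alpha(w|_{\Gamma_0^R})\overline w\dd S = \sum_{|\alpha+j|^2\le k^2}\sqrt{k^2-|\alpha+j|^2}\,|\hat w_j(\alpha)|^2 \ge 0$, so that
\[
0 = \Im a_\alpha(w,w) = -k^2\int_{\Omega_0^R}\Im(n_p^2)\,|w|^2\dd x - \sum_{|\alpha+j|^2\le k^2}\sqrt{k^2-|\alpha+j|^2}\,|\hat w_j(\alpha)|^2 .
\]
Since $\Im n_p^2\ge 0$, both (nonpositive) terms vanish; in particular $w=0$ a.e.\ on $\{\Im n_p^2>0\}\cap\Omega_0^R$, which by the $L$-periodicity of $n_p^2$ together with \Cref{assumption_absorption} contains a nonempty open set. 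Testing the variational identity against $C_0^\infty(\Omega_0^R)$ shows that $w$ is a weak solution of $\Delta w + k^2 n_p^2 w = 0$ in the connected set $\Omega_0^R$; the unique continuation principle for second-order elliptic equations with bounded potential then gives $w\equiv 0$ in $\Omega_0^R$. Hence $A_\alpha$ is injective and consequently bijective, which proves the lemma; note that every step goes through verbatim for each $\alpha\in\overline{I_{}}$.

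The step I expect to be the genuine obstacle, or at least the one requiring the most care, is the uniqueness argument rather than the Fredholm setup: one must ensure that the vanishing set obtained from the imaginary part is truly open and meets $\Omega_0^R$ (this needs the periodicity of $n_p^2$, not just \Cref{assumption_absorption} as stated for $\R_+^d$), and that the unique continuation principle is invoked in the correct generality, namely for an $L^\infty$ — not smooth — refractive index. Once the sign of the evanescent part of $T_\alpha$ has been used to make $b_\alpha$ coercive and the finite rank of $T_\alpha^{\mathrm{prop}}$ has been noted, the Fredholm part is routine.
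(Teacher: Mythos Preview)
Your proof is correct and follows essentially the same route as the paper: establish a G\aa rding-type inequality to obtain the Fredholm property, then prove injectivity via the imaginary part of $a_\alpha(w,w)$, \Cref{assumption_absorption}, and unique continuation. The only cosmetic difference is that the paper handles the whole boundary term at once (observing that $-\Re\int_{\Gamma_0^R}T_\alpha w\,\overline w\,\dd S=\sum_{|\alpha+j|>k}|\beta_j|\,|\hat w_j|^2\ge 0$, since the propagating modes contribute nothing to the real part) and hence needs only the compact embedding $H^1_\alpha\hookrightarrow L^2$ for the compact remainder, whereas you additionally peel off $T_\alpha^{\mathrm{prop}}$ as a finite-rank term; both lead to the same Fredholm conclusion.
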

            \begin{proof}
                Let $w_\alpha$ be in $ \tilde{H}_\alpha^1(\Omega_0^R)$ for a fixed $\alpha \in \overline{I_{}}$ and set $\widehat{{w}}_j := \widehat{({w}_\alpha  \big|_{\Gamma_0^R})}_j(\alpha)$, then it holds for $\beta_j := \sqrt{k^2 - |\alpha + j |^2}$
                \begin{align*}
                    - \Re\left(  \int_{\Gamma_0^{ R}} T_\alpha({w}_\alpha \big|_{\Gamma_0^R})  \overline{{w}}_\alpha \big|_{\Gamma_0^R} \dd S  \right)
                    =  \sum_{\substack{j \in \Z^{d-1} \\ |\alpha_j| > k}} |\beta_j|\ |\widehat{{w}}_j|^2
                    \geq 0
                    ,
                \end{align*}
                which implies
                \begin{equation*}
                    \Re  a_\alpha({w}_\alpha, {w}_\alpha)
                    \geq ||{w}_\alpha||^2_{H_{\alpha}^1(\Omega_0^R)} - ||1-k^2 n^2||_{L^2(\Omega_0^{R_0})} ||{w}_\alpha||^2_{L^2(\Omega_0^R)}
                    .
                \end{equation*}
                Thus, the sesquilinear form fulfills the G\aa rding inequality. In the case of $||1-k^2 n^2||_{L^2(\Omega_0^{R_0})} = 0$, the problem is solvable by the theorem of Lax and Milgram. Because of the compact embedding of $\tilde{H}_\alpha^1(\Omega_0^R)$ into $L^2(\Omega_0^R)$, the equation corresponds to a Fredholm operator of index zero. 
                Consequently, by showing the injectivity, we obtain the unique existence of the solution.
            
                For the boundary integral, it holds the inequality
                \begin{equation}\label{eq_ImRand}
                    \Im \left(  \int_{\Gamma_0^{R}} T_\alpha ({w}_\alpha \big|_{\Gamma_0^R}) \overline{w}_\alpha \big|_{\Gamma_0^R} \dd S  \right)
                    =  \sum_{\substack{j \in \Z^{d-1} \\ |\alpha_j| < k}} |\beta_j|\ |\widehat{{w}}_j|^2
                    \geq 0
                    .
                \end{equation}
                Since we assume $\Im(n_p^2) \geq 0$ in $\Omega_0^R$ and $\Im(n_p^2)> 0$ on an open ball of $\Omega_0^R$, we derive for $(\J f)(\alpha, \cdot) = 0$
                \begin{equation*}
                    0
                    = \Im \left( \int_{\Omega_0^R} k^2 n_p^2 |w_\alpha|^2  \dd x + \int_{\Gamma_0^{R}} T_\alpha( w_\alpha) \overline{w}_\alpha \dd S  \right)
                    \geq   \int_{\Omega_0^R} k^2 \Im n_p^2 |w_\alpha|^2  \dd x 
                    \geq 0
                    .
                \end{equation*}
                We conclude that $w_\alpha$ vanishes on the open set, where $\Im n_p^2 > 0$, and the theorem of unique continuation implies that $w_\alpha$ is equal to zero everywhere in $\Omega_0^R$.
            \end{proof}

            Using the same argumentation of the second part, we also get uniqueness for the integrated form \eqref{eq_VarUnterIntegral}.
            \begin{corollary}[]{}
                Every solution to the variational problem \eqref{eq_VarUnterIntegral} is unique.
            \end{corollary}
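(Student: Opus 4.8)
The plan is to reduce the uniqueness of the integrated problem \eqref{eq_VarUnterIntegral} to the fibrewise uniqueness already contained in \Cref{lemma_pktExistence}. By linearity it suffices to show that the only $u\in L^2(I_{};\tilde H^1_\alpha(\Omega_0^R))$ satisfying \eqref{eq_VarUnterIntegral} with $\mathcal{J}f=0$ (equivalently with $f=0$, since $\mathcal{J}$ is an isomorphism) is $u=0$. I would first localise the integrated variational identity in the Bloch variable $\alpha$ so as to recover, for almost every $\alpha\in I_{}$, the pointwise problem \eqref{eq_VarPointwise} with vanishing right hand side for $u_\alpha:=u(\alpha,\cdot)$, and then apply the uniqueness from \Cref{lemma_pktExistence}.

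For the localisation step: for any $\psi\in C_0^\infty(\overline{\Omega^R})$ with $\psi|_{\Gamma^0}=0$ the function $\alpha\mapsto(\mathcal{J}\psi)(\alpha,\cdot)$ lies in $L^2(I_{};\tilde H^1_\alpha(\Omega_0^R))$, and for every $\varphi\in C(\overline{I_{}})$ the product $v(\alpha,\cdot):=\varphi(\alpha)(\mathcal{J}\psi)(\alpha,\cdot)$ is an admissible test function in \eqref{eq_VarUnterIntegral}. With $\mathcal{J}f=0$ this yields $\int_{I_{}}\overline{\varphi(\alpha)}\,a_\alpha\big(u_\alpha,(\mathcal{J}\psi)(\alpha,\cdot)\big)\dd\alpha=0$ for all such $\varphi$. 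Since the $a_\alpha$ are bounded uniformly in $\alpha\in\overline{I_{}}$ (the symbols $\sqrt{k^2-|\alpha+j|^2}$ of $T_\alpha$ are controlled uniformly in $\alpha$ on the bounded set $\overline{I_{}}$) and $\alpha\mapsto\|u_\alpha\|_{H^1_\alpha}$, $\alpha\mapsto\|(\mathcal{J}\psi)(\alpha,\cdot)\|_{H^1_\alpha}$ lie in $L^2(I_{})$, the map $\alpha\mapsto a_\alpha(u_\alpha,(\mathcal{J}\psi)(\alpha,\cdot))$ is in $L^1(I_{})$, and the fundamental lemma of the calculus of variations gives $a_\alpha(u_\alpha,(\mathcal{J}\psi)(\alpha,\cdot))=0$ for a.e.\ $\alpha$. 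Running this over a countable family $\{\psi_n\}$ for which $\{\mathcal{J}\psi_n\}$ is dense in $L^2(I_{};\tilde H^1_\alpha(\Omega_0^R))$ — which exists because $C_0^\infty(\overline{\Omega^R})\cap\tilde H^1(\Omega^R)$ is dense in $\tilde H^1(\Omega^R)$ and $\mathcal{J}$ is an isometric isomorphism — produces a single $\alpha$-null set outside of which simultaneously $\{(\mathcal{J}\psi_n)(\alpha,\cdot)\}_n$ is dense in $\tilde H^1_\alpha(\Omega_0^R)$ and $a_\alpha(u_\alpha,(\mathcal{J}\psi_n)(\alpha,\cdot))=0$ for all $n$. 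Continuity of the linear functional $a_\alpha(u_\alpha,\cdot)$ then forces $a_\alpha(u_\alpha,\cdot)\equiv0$ on $\tilde H^1_\alpha(\Omega_0^R)$ for a.e.\ $\alpha\in I_{}$. (Equivalently one can phrase this as $\alpha\mapsto a_\alpha(u_\alpha,\cdot)\in L^2(I_{};\tilde H^{-1}_\alpha(\Omega_0^R))$ and use the duality $L^2(I_{};\tilde H^1_\alpha)^\ast\cong L^2(I_{};\tilde H^{-1}_\alpha)$ for these direct integrals.)

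To conclude: for a.e.\ $\alpha\in I_{}$ the function $u_\alpha$ solves \eqref{eq_VarPointwise} with right hand side zero, so by \Cref{lemma_pktExistence} (whose second part testing with $u_\alpha$, taking the imaginary part, invoking \eqref{eq_ImRand} together with $\Im n_p^2\ge0$ to obtain $u_\alpha=0$ on the open set $\{\Im n_p^2>0\}$, and then unique continuation, is precisely the "same argumentation" referred to) we get $u_\alpha=0$. Hence $u=0$ in $L^2(I_{};\tilde H^1_\alpha(\Omega_0^R))$.

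I expect the only delicate point to be the measure-theoretic bookkeeping in the localisation step, namely making the exceptional $\alpha$-null set independent of the test function; this is the usual fibrewise density / measurable-selection issue for the direct-integral space $L^2(I_{};\tilde H^1_\alpha(\Omega_0^R))$, and once it is settled the rest is a verbatim repetition of the injectivity argument in \Cref{lemma_pktExistence}. One could instead transfer the statement via \Cref{thm_equivalence} to uniqueness for \Cref{prob_Var1} with $q=0$, but since that uniqueness is itself established by the same fibrewise reasoning, staying in the transformed setting as above is the cleaner route.
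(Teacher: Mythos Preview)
Your argument is correct, but the paper takes a more direct route. Instead of first localising in $\alpha$ to recover the pointwise problem \eqref{eq_VarPointwise} and then invoking \Cref{lemma_pktExistence}, the paper simply tests the integrated identity \eqref{eq_VarUnterIntegral} with $v=w$ (the solution itself) and takes the imaginary part of the whole $\alpha$-integral at once: by \eqref{eq_ImRand} the boundary contribution is nonnegative for every $\alpha$, hence so is its integral, and together with $\Im n_p^2\ge0$ this forces $\int_{I}\int_{\Omega_0^R}k^2\,\Im n_p^2\,|w_\alpha|^2\dd x\dd\alpha=0$, i.e.\ $w_\alpha=0$ on $\{\Im n_p^2>0\}$ for a.e.\ $\alpha$. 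Unique continuation then finishes.

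The trade-off: the paper's approach sidesteps the separable-test-function and countable-density bookkeeping you correctly flag as the delicate point, since a single test function $v=w$ already yields vanishing on the absorption set. On the other hand, the paper still needs that $w_\alpha$ satisfies the Helmholtz equation for a.e.\ $\alpha$ in order to invoke unique continuation, which it merely asserts (justifiable via \Cref{thm_equivalence}, or by exactly the kind of localisation you carry out). Your route makes that step explicit and is in this sense more self-contained; the paper's route is shorter.
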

            \begin{proof}
                Because of the inequality \eqref{eq_ImRand}, we have for $(\J f) = 0$ and the corresponding solution $w \in L^2(I_{}; \tilde{H}^1_{\alpha}(\Omega_0^R))$ the estimation
                \begin{equation*}
                    0
                    = \Im \left(\int_{I_{}} \int_{\Omega_0^R} k^2 n_p^2 |w_\alpha|^2  \dd x + \int_{\Gamma_0^{R}} T_\alpha(w_\alpha) \overline{w}_\alpha \dd S  \dd \alpha \right)
                    \geq 0
                    ,
                \end{equation*}
                where $w_\alpha := w(\alpha, \cdot)$. This implies that $w_\alpha$ vanishes on an open ball for almost every $\alpha \in I_{}$. Since $w_\alpha$ solves the Helmholtz equation almost everywhere in $I_{}$, the unique continuation property implies that $w_\alpha$ vanishes everywhere w.r.t.\ to $x$ and almost everywhere in $I_{}$.
            \end{proof}

            Now, we prove the connection between the pointwise variational problem and the integrated form.
            \begin{lemma}
                The variational problem \eqref{eq_VarUnterIntegral} is uniquely solvable.
            \end{lemma}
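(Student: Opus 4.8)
The plan is to construct a solution of \eqref{eq_VarUnterIntegral} from the pointwise solutions supplied by \Cref{lemma_pktExistence} and to take uniqueness from the preceding corollary. Concretely, for almost every $\alpha\in I$ let $w_\alpha\in\tilde H^1_\alpha(\Omega_0^R)$ be the unique solution of \eqref{eq_VarPointwise} with data $(\J f)(\alpha,\cdot)$, and set $w(\alpha,\cdot):=w_\alpha$. Since the corollary above already shows that \eqref{eq_VarUnterIntegral} has at most one solution, it remains only to verify that $w$ is well defined as an element of $L^2(I;\tilde H^1_\alpha(\Omega_0^R))$ and that it solves \eqref{eq_VarUnterIntegral}. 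The latter is then immediate: integrating the pointwise identity $a_\alpha(w_\alpha,v(\alpha,\cdot))=\int_{\Omega_0^R}(\J f)(\alpha,\cdot)\,\overline{v(\alpha,\cdot)}\dd x$ over $\alpha\in I$ against an arbitrary test function $v\in L^2(I;\tilde H^1_\alpha(\Omega_0^R))$ reproduces \eqref{eq_VarUnterIntegral}, both sides being integrable in $\alpha$ by Cauchy--Schwarz once $w$ is known to lie in the Bochner space.

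The core of the argument is therefore a bound on $w_\alpha$ that is uniform in $\alpha$. I would first remove the $\alpha$-dependence of the function spaces by the substitution $\Phi_\alpha\colon w\mapsto e^{\ii\alpha\cdot\tx}w$, which is an isomorphism of $\tilde H^1_\alpha(\Omega_0^R)$ onto the single space of $2\pi$-periodic $H^1$-functions vanishing on $\Gamma_0^0$, with $\|\Phi_\alpha\|$ and $\|\Phi_\alpha^{-1}\|$ bounded uniformly for $\alpha\in\overline{I}$. Conjugating $a_\alpha$ by $\Phi_\alpha$ produces a family of sesquilinear forms on one fixed Hilbert space whose coefficients depend polynomially on $\alpha$ in the volume terms and, through $T_\alpha$, continuously on $\alpha\in\overline{I}$; the only delicate point is continuity at the Rayleigh frequencies $|\alpha+j|=k$, where one checks that $\alpha\mapsto\sqrt{k^2-|\alpha+j|^2}$ stays continuous because, with the chosen branch cut, it vanishes at these points and is real resp.\ purely imaginary on the two sides. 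Hence the associated bounded operators $A_\alpha$ form a norm-continuous family on the compact set $\overline{I}$, and each $A_\alpha$ is boundedly invertible by \Cref{lemma_pktExistence}; since inversion is continuous on the open set of invertible operators, $\alpha\mapsto A_\alpha^{-1}$ is norm-continuous, so $C:=\sup_{\alpha\in\overline{I}}\|A_\alpha^{-1}\|<\infty$.

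With this uniform bound, $\|w_\alpha\|_{\tilde H^1_\alpha(\Omega_0^R)}\le C\,\|(\J f)(\alpha,\cdot)\|_{L^2(\Omega_0^R)}$ for a.e.\ $\alpha$, and squaring and integrating over $I$, together with the isometry $\|\J f\|_{L^2(I;L^2(\Omega_0^R))}=\|f\|_{L^2(\Omega^R)}$ from \cite{Lechl2016}, yields $\|w\|_{L^2(I;\tilde H^1_\alpha(\Omega_0^R))}\le C\,\|f\|_{L^2(\Omega^R)}<\infty$. Measurability of $\alpha\mapsto w_\alpha$, which is needed for $w$ to define a genuine element of the Bochner space, follows from writing $w_\alpha=A_\alpha^{-1}(\J f)(\alpha,\cdot)$ as the composition of the norm-continuous (hence strongly measurable) operator family $\alpha\mapsto A_\alpha^{-1}$ with the measurable map $\alpha\mapsto(\J f)(\alpha,\cdot)$.

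I expect the main obstacle to be precisely the norm-continuity of $\alpha\mapsto A_\alpha$ up to the boundary of $I$ and across the Rayleigh frequencies, since this is what upgrades the purely pointwise solvability of \Cref{lemma_pktExistence} into the uniform bound $C<\infty$ and hence into membership of $w$ in $L^2(I;\tilde H^1_\alpha(\Omega_0^R))$; the tail of the DtN multiplier (large $j$) has to be controlled uniformly as well, but there the difference quotient in $\alpha$ is bounded. Once this is in place the remaining steps are routine, and the exact correspondence between \eqref{eq_VarUnterIntegral} and \eqref{eq_VarPointwise}---that any solution of the former restricts to a solution of the latter for a.e.\ $\alpha$---can be read off afterwards by testing with products $v(\alpha,\tx)=\phi(\alpha)\psi(\tx)$, which are dense in $L^2(I;\tilde H^1_\alpha(\Omega_0^R))$, and which incidentally gives a second route to uniqueness.
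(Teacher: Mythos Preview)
Your proposal is correct and follows essentially the same route as the paper: construct $w$ from the pointwise solutions of \Cref{lemma_pktExistence}, pass to the fixed periodic space via multiplication by $e^{\ii\alpha\cdot\tx}$, prove norm-continuity of the resulting operator family on the compact set $\overline{I}$ (with the DtN term handled across the Rayleigh frequencies and in the large-$j$ tail exactly as you anticipate), and use the uniform bound on the inverses to place $w$ in the Bochner space. The paper carries out precisely these steps with explicit estimates; your additional remark on measurability is a welcome clarification that the paper leaves implicit.
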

            \begin{proof}
                If we define the function $w$, $w(\alpha, \cdot) := {u}_\alpha$, where $u_\alpha$ solve \eqref{eq_VarPointwise} for all $\alpha \in \overline{I_{}}$, \Cref{lemma_pktExistence} implies that $w$ solves the problem \eqref{eq_VarUnterIntegral}.
                What still needs to be checked, is that $w$ lies in $L^2(I_{}; \tilde{H}^1_{\alpha}(\Omega_0^R))$. 
                For that, we show that the solution operator $L_\alpha$ for the problem \eqref{eq_VarPointwise} is uniformly continuous in $\overline{I_{}}$.

                At first, we consider the continuity of the sesquilinear form \eqref{lemma_pktExistence}. For every function ${v}_\alpha \in \tilde{H}^1_{\alpha}(\Omega^R)$, there exists a function ${v}^p_\alpha \in \tilde{H}^1_p(\Omega_0^R)$, where $\tilde{H}^1_p(\Omega_0^R)$ is the space with $\alpha = 0$, such that ${v}_\alpha (\tx, x_d) = e^{i \alpha \cdot \tx} {v}^p_\alpha(\tx, x_d)$. 
                Moreover, the norms of the two functions are equal: $||{v}_\alpha||_{\tilde{H}^1_{\alpha}(\Omega_0^R)}=||{v}^p_\alpha||_{\tilde{H}^1_p(\Omega_0^R)}$. Now, we choose ${v}_\alpha$, ${u}_\alpha \in \tilde{H}^1_{\alpha}(\Omega^R)$ and ${v}^p_\alpha$, ${u}^p_\alpha \in \tilde{H}^1_p(\Omega^R)$ as described, and plugging them into the sesquilinear form \eqref{lemma_pktExistence}  yields
                \begin{align*}
                    {b}_\alpha({u}^p_\alpha, {v}^p_\alpha)
                    :=
                    &
                    \int_{\Omega_0^R} \left( \nabla {u}^p_\alpha \cdot \nabla \overline{{v}}^p_\alpha + (|\alpha|^2  -  k^2 n_p^2) {u}^p_\alpha \overline{{v}}^p_\alpha  +  \ii {u}^p_\alpha \alpha \cdot \nabla_{\tx} \overline{{v}}^p_\alpha - \ii \overline{{v}}^p_\alpha \alpha \cdot \nabla_{\tx} {u}^p_\alpha \right) \dd x 
                    \\
                    &\quad \quad \quad 
                    -\int_{\Gamma_0^{R}} S_\alpha( {u}^p_\alpha\big|_{\Gamma_0^R})  \overline{{v}}^p_\alpha\big|_{\Gamma_0^R} \dd S
                    ,
                \end{align*}
                where
                \begin{align*}
                    \int_{\Gamma_0^{R}} S_\alpha( {u}^p_\alpha\big|_{\Gamma_0^R})  \overline{{v}}^p_\alpha\big|_{\Gamma_0^R} \dd S
                    := \sum_{j \in \Z^2} \ii \beta_j \widehat{( {u}^p_\alpha\big|_{\Gamma_0^R})}_j (\alpha)\overline{\widehat{( {v}^p_\alpha \big|_{\Gamma_0^R})}_j}(\alpha)
                    .
                \end{align*}
                In contrary to  $T_\alpha$, the operator $S_\alpha$ only depends on $\alpha$ by the coefficients $\beta_j(\alpha) := \sqrt{k^2 - |\alpha + j|^2}$.
                
                Fix $\varepsilon > 0$, $\alpha \in \R^{d-1}$ and $\alpha_\varepsilon \in \R^{d-1}$, where $|\alpha_\varepsilon - \alpha| < \varepsilon$, such that for every $u$, $v \in \tilde{H}^1_p(\Omega_0^R)$ it holds
                \begin{align*}
                    |{b}_{\alpha_\varepsilon} (u,v) - {b}_{\alpha}(u,v) |
                    &\leq \left( | |\alpha_\varepsilon|^2 - |\alpha|^2 | +2|\alpha_\varepsilon - \alpha| \right) ||u ||_{\tilde{H}^1_p(\Omega_0^R)} ||v||_{\tilde{H}^1_p(\Omega_0^R)} 
                    \\
                    &\ \ \ \ + \sum_{j \in \Z^{d-1}} |\beta_j(\alpha_\varepsilon) - \beta_j(\alpha)|  \left|(\widehat{ {u}\big|_{\Gamma_0^R}})_j \overline{(\widehat{ {v}\big|_{\Gamma_0^R}})_j}\right|
                    \\
                    &\leq \left( | |\alpha_\varepsilon|^2 - |\alpha|^2 | +2|\alpha_\varepsilon - \alpha| \right) ||u ||_{\tilde{H}^1_p(\Omega_0^R)} ||v||_{\tilde{H}^1_p(\Omega_0^R)} 
                    \\
                    &\ \ \ \ +  C(\alpha_\varepsilon,\alpha ) ||u ||_{\tilde{H}^1_p(\Omega_0^R)} ||v||_{\tilde{H}^1_p(\Omega_0^R)}
                    ,
                \end{align*}
                where
                \begin{align*}
                    C(\alpha_\varepsilon,\alpha )
                    := c_{\operatorname{trace}}\sup_{j \in \Z^{d-1}} \frac{|({k^2 - | j + \alpha_\varepsilon |^2})^{1/2} - ({k^2 - | j + \alpha |^2})^{1/2}|}{(1+|j|^2)^{1/2}}
                    .
                \end{align*}
                For $j = 0$ and for $j \in \Z^{d-1}$ with $k^2 = |j + \alpha |^2$, the fraction $C(\alpha_\varepsilon,\alpha )$ is continuous in $I_{}$. For other $j \in  \Z^{d-1}$, it holds
                \begin{align*}
                    &
                    \frac{|({k^2 - | j + \alpha_\varepsilon |^2})^{1/2} - ({k^2 - | j + \alpha |^2})^{1/2}|}{(1+|j|^2)^{1/2}}
                    \\
                    &
                    =  \frac{| | j + \alpha_\varepsilon |^2 -  | j + \alpha |^2 |}{(1+|j|^2)^{1/2}|({k^2 - | j + \alpha_\varepsilon |^2})^{1/2} + ({k^2 - | j + \alpha |^2})^{1/2}|}
                    .
                \end{align*}
                For every $j\in \Z^{d-1}$ with $k^2 \neq | j + \alpha |^2$, the value $\beta_j(\alpha)$ is contained either in $(-\infty, 0)$, or $\ii(0, \infty)$, and fulfills $|\beta_j(\alpha)| > \delta$ for a small constant $\delta > 0$ independent of $j$. It follows
                \begin{align*}
                    &|({k^2 - | j + \alpha_\varepsilon |^2})^{1/2} + ({k^2 - | j + \alpha |^2})^{1/2}|
                    \geq \delta
                \end{align*}
                for all $\alpha_\varepsilon \in \overline{I_{}}$.
                Thus, it holds the estimation
                \begin{align*}
                    C(\alpha_\varepsilon,\alpha )
                    \leq  \sup_{j \in \Z^{d-1}} \frac{c  }{|j|\delta} \left| \sum^2_{i=1} (\alpha_\varepsilon - \alpha)_i (\alpha_\varepsilon + \alpha + 2   j )_i  \right|
                    \to 0 
                    \quad\text{for }
                    \alpha_\varepsilon \to \alpha
                    ,
                \end{align*}
                which implicates that the operator $\alpha \mapsto {b}_\alpha$
                is continuous from $\overline{I_{}}$ into  $\mathcal{L}(\tilde{H}^1_{\alpha}(\Omega_0^R); \mathcal{L}(\tilde{H}^1_{\alpha}(\Omega_0^R); \CC))$.
                
                Since the sesquilinear form $a_\alpha$ is equivalent to $b_\alpha$, and since the norms of the spaces are equivalent, the sesquilinear form $a_\alpha$ is also continuous. Applying the Neumann series argument, we obtain that the solution operator $\alpha \mapsto L_\alpha$ is continuous on the compact set $\overline{I_{}}$, and thus, bounded by a constant $C$ independent of $\alpha$. In particular, the function $w$ lies in  ${L^2(I_{};\tilde{H}^1_{\alpha}(\Omega_0^R))}$, since
                \begin{align*}
                    ||{w}||^2_{L^2(I_{}; \tilde{H}^1_{\alpha}(\Omega_0^R))}
                    &= \int_{I_{}} ||{u}_\alpha||^2_{\tilde{H}^1_{\alpha}(\Omega_0^R)} \dd \alpha
                    \leq \int_{I_{}} ||L_\alpha||^2\, ||(\J f)(\alpha, \cdot)||^2_{ L^2(\Omega_0^R)}  \dd \alpha
                    \\
                    &\leq C^2  ||\J f||^2_{L^2(I_{} \times \Omega_0^R)}
                    .
                \end{align*}
            \end{proof}
        }
    }
	\subsection{Locally perturbed periodic inhomogeneous layer scattering}\label{Sec23}
    {
		Combining \Cref{thm_uniquesolution_per} and \Cref{thm_equivalence}, we obtain the unique existence of a solution for the unperturbed scattering problem. 
        Now we consider the case that the perturbation $q \in L^\infty(\Omega_0^{R_0})$ is not vanishing.
        With the results from the subsection above, we are able to prove \Cref{thm_whole}.
        \begin{proof}[Proof of \Cref{thm_whole}]
            The sesquilinear form ${l} : \tilde{H}^1(\Omega^R) \times \tilde{H}^1(\Omega^R) \to \CC$,
            \[
                {l}(u, v) 
                := \int_{\Omega^R}  k^2 q u  \overline{v}  \dd x 
                ,
            \]
            is a compact perturbation, since $q$ vanishes outside of $\Omega_0^{R_0}$. As we showed earlier, the unperturbed problem is uniquely solvable, such that the variational formulation \ref{prob_Var1} corresponds to a Fredholm operator of index zero. Thus, we have to show uniqueness, which can be proven by using the same argumentation as in \Cref{lemma_pktExistence}, if \Cref{assumption_absorption} holds, since for the solution $w = \J u$ to $f = 0$ it holds
            \begin{align*}
                0
                &= \Im \left(
                \int_{I_{}} \int_{\Omega_0^R} k^2 n_p^2 |w|^2  \dd x 
                + \int_{I_{}} \int_{\Omega_0^R} k^2 q u \overline{w} \dd x 
                \dd \alpha
                \right)
                \\
                &=
                \int_{I_{}} \int_{\Omega_0^R} k^2 \Im (n_p^2) |w|^2  \dd x \dd \alpha
                + \int_{\Omega_0^R} k^2 \Im (q) |u|^2  \dd x 
                \\
                &\geq 0
                .
            \end{align*}
        \end{proof}
		
        As the last point, we show the regularity of the quasi-periodic solutions w.r.t.\ parameter $\alpha$. We will use this result for the implementation of the algorithm, since we can improve the convergence rate of the inverse transform with it.        
        As the first step and defining the set
        \[
            \mathcal{A}
            := \{
                    \alpha \in \overline{I_{}} : |\alpha + j| = k
                    \text{ for some }
                    j \in \Z^{d-1}
                \}
            ,
        \]
        one can show the regularity result for the unperturbed case by applying the Neumann series argument.
        \begin{theorem}\label{thm_regularity_per}
            If the right hand side $\J f$ is analytical in $\alpha \in \overline{I_{}}$, then the map $\alpha \mapsto u_\alpha$, where $u_\alpha$ solves the quasiperiodic problem \eqref{eq_VarPointwise}, is analytically in $\overline{I_{}} \setminus \mathcal{A}$, and for any $\hat{\alpha} \in \mathcal{A}$, there exists a $j_0 \in \Z^{d-1}$ and a neighborhood $U(\hat{\alpha}) \subseteq \R^{d-1}$ of $\hat{\alpha}$, such that the function can be decomposed into two analytical functions $u^{(1)}_\alpha$ and $u^{(2)}_\alpha$ in the form
            \[
                u_\alpha
                = u^{(1)}_\alpha + \sqrt{k^2 - |\alpha + j_0|^2} u^{(2)}_\alpha
                \quad\text{for }
                \alpha \in U(\hat{\alpha})
                .
            \]
        \end{theorem}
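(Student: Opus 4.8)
The plan is to analyze the solution operator $L_\alpha$ of the quasi-periodic problem \eqref{eq_VarPointwise} as a function of $\alpha$ and show that, away from the critical set $\mathcal{A}$, everything in sight depends analytically on $\alpha$, so a Neumann-series/Cramer's-rule type argument yields analyticity of $\alpha \mapsto u_\alpha$; near a point $\hat\alpha \in \mathcal{A}$ the only source of non-analyticity is the square root $\beta_j(\alpha) = \sqrt{k^2 - |\alpha+j|^2}$ for the finitely many $j$ with $|\hat\alpha + j| = k$, and I would isolate it explicitly.

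First I would fix the reference periodic space $\tilde H^1_p(\Omega_0^R)$ and, exactly as in the proof of the previous lemma, transfer the $\alpha$-quasi-periodic problem to this fixed space via $u_\alpha(\tx,x_d) = e^{\ii\alpha\cdot\tx} u^p_\alpha(\tx,x_d)$, obtaining the sesquilinear form $b_\alpha$ on $\tilde H^1_p(\Omega_0^R) \times \tilde H^1_p(\Omega_0^R)$. The volume part of $b_\alpha$ is a polynomial in $\alpha$ (entries $|\alpha|^2$, $\ii\alpha$), hence entire in $\alpha \in \CC^{d-1}$. The boundary part is $\sum_{j} \ii\beta_j(\alpha)\, \widehat{u^p_j}(\alpha)\overline{\widehat{v^p_j}(\alpha)}$; here each Fourier coefficient depends polynomially on $\alpha$ through the factor $e^{\ii\alpha\cdot\tx}$, and $\beta_j(\alpha) = \sqrt{k^2-|\alpha+j|^2}$ is analytic as long as $|\alpha+j| \neq k$. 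For $\alpha$ in a complex neighborhood of a point of $\overline{I_{}}\setminus\mathcal{A}$ one checks that the tail sum over large $|j|$ converges uniformly (the estimates from the previous proof, with $|\beta_j(\alpha)| \gtrsim |j|$ for large $|j|$, survive complexification), so $\alpha \mapsto b_\alpha$ is an analytic $\mathcal{L}(\tilde H^1_p;\mathcal{L}(\tilde H^1_p;\CC))$-valued map there. Since $b_\alpha$ is invertible on $\overline{I_{}}$ (Theorem~\ref{thm_uniquesolution_per} together with the equivalence of $a_\alpha$ and $b_\alpha$), the inverse stays invertible and analytic on a complex neighborhood by the Neumann series; composing with the analytic right-hand side $\J f$ and undoing the transformation $u_\alpha = e^{\ii\alpha\cdot\tx} u^p_\alpha$ (analytic in $\alpha$) gives analyticity of $\alpha\mapsto u_\alpha$ on $\overline{I_{}}\setminus\mathcal{A}$.

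For $\hat\alpha \in \mathcal{A}$ I would pick, among the finitely many $j$ realizing $|\hat\alpha+j|=k$, one index $j_0$, and a neighborhood $U(\hat\alpha)$ small enough that $|\alpha+j| \ne k$ there for every other $j$ and that the generic branch $\sqrt{k^2-|\alpha+j_0|^2}$ can be written on $U(\hat\alpha)$ as $\sqrt{k^2-|\alpha+j_0|^2} = g(\alpha)\,\sqrt{k^2-|\alpha+j_0|^2}$ — more precisely, factor $k^2-|\alpha+j_0|^2 = h(\alpha)$ with $h$ analytic, $h(\hat\alpha)=0$, $\nabla h(\hat\alpha)\ne 0$ (since $\nabla_\alpha|\alpha+j_0|^2 = 2(\alpha+j_0)\ne 0$), so $\sqrt{h}$ is the single non-analytic building block. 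For the remaining $j$ with $|\hat\alpha+j|=k$, $j\ne j_0$, note $|\alpha+j|^2$ and $|\alpha+j_0|^2$ differ by a linear function of $\alpha$ vanishing nowhere interesting, so each $\sqrt{k^2-|\alpha+j|^2}$ can itself be expressed as an analytic function of $\alpha$ times $\sqrt{h(\alpha)}$ after shrinking $U(\hat\alpha)$; substituting these into $b_\alpha$ shows $b_\alpha = b^{(0)}_\alpha + \sqrt{h(\alpha)}\, b^{(1)}_\alpha$ with $b^{(0)}, b^{(1)}$ analytic and operator-valued. Treating $s := \sqrt{h(\alpha)}$ as an independent variable, $(s,\alpha)\mapsto b^{(0)}_\alpha + s\,b^{(1)}_\alpha$ is analytic and invertible at $(0,\hat\alpha)$ (it equals $b_{\hat\alpha}$, which is invertible), so its inverse is jointly analytic in $(s,\alpha)$ near $(0,\hat\alpha)$; expanding that inverse to first order in $s$ and applying it to $\J f$ yields $u^p_\alpha = v^{(0)}_\alpha + s\, v^{(1)}_\alpha + s^2(\dots)$, and collecting the $s^2 = h(\alpha)$ terms into the analytic part gives precisely $u^p_\alpha = u^{p,(1)}_\alpha + \sqrt{h(\alpha)}\,u^{p,(2)}_\alpha$ with analytic $u^{p,(i)}$; undoing the phase factor produces the stated decomposition $u_\alpha = u^{(1)}_\alpha + \sqrt{k^2-|\alpha+j_0|^2}\,u^{(2)}_\alpha$.

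The main obstacle I expect is the bookkeeping for the tail of the boundary series when complexifying $\alpha$: one must verify that the uniform bound $|\beta_j(\alpha)| \ge \delta$ and the $|j|$-growth used in the previous lemma's estimate of $C(\alpha_\varepsilon,\alpha)$ persist for $\alpha$ in a genuine complex polydisc around each real point, so that the operator-valued series for $b_\alpha$ (and for $b^{(1)}_\alpha$) converges in operator norm and defines an analytic map; once that is in place, the Neumann-series/implicit-function step and the explicit extraction of the $\sqrt{h}$ factor are routine. A secondary point requiring care is ensuring $U(\hat\alpha)$ is chosen uniformly small enough to handle all (finitely many) resonant indices simultaneously and to keep all nonresonant branches analytic and bounded below.
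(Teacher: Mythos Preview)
Your approach matches the paper's: its proof is only a brief sketch, referring to Kirsch (1993), which splits the differential operator as $D_\alpha = D^{(1)}_\alpha + \sqrt{k^2-|\alpha+j_0|^2}\,D^{(2)}_\alpha$ with both pieces analytic in $\alpha$ and then invokes a Neumann-series argument to obtain the same decomposition for $D_\alpha^{-1}$ --- exactly your strategy via $b_\alpha = b^{(0)}_\alpha + s\,b^{(1)}_\alpha$ and inversion in the joint variable $(s,\alpha)$.

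One point to tighten: your treatment of the case where several indices $j$ satisfy $|\hat\alpha+j|=k$ is not right as written. The affine function $|\alpha+j|^2-|\alpha+j_0|^2$ \emph{does} vanish at $\hat\alpha$ (subtract the two equalities $|\hat\alpha+j|^2=k^2=|\hat\alpha+j_0|^2$), so ``vanishing nowhere interesting'' is false; in $d=2$ the ratio $(k^2-|\alpha+j|^2)/(k^2-|\alpha+j_0|^2)$ still extends analytically with nonzero value at $\hat\alpha$, so your conclusion survives, but in $d=3$ the two zero-circles are generically transversal at $\hat\alpha$ and no single $\sqrt{h(\alpha)}$ captures all the branch behaviour. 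The paper's sketch is silent on this too, so the theorem should be read under the implicit assumption that $j_0$ is the unique resonant index at $\hat\alpha$ (or, in $d=3$, that the finitely many resonant $\hat\alpha+j$ are pairwise parallel).
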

        \begin{proof}
            This can be showed analogously to \cite[Theorem~a]{Kirsch1993}, which treats the case of the quasi-periodic scattering problem with sound-soft boundary conditions. Loosely speaking, one can split the differential operator $D_\alpha$ into $D^{(1)}_\alpha + \sqrt{k^2 - |\alpha + j_0|^2} D^{(2)}_\alpha$, where both operators $D^{(1)}_\alpha$ and $D^{(2)}_\alpha$ are analytical in $\alpha$. Since $\sqrt{k^2 - |\alpha + j_0|^2} \to 0$ for $|\alpha + j_0|^2 \to k^2$, the Neumann series argument implies that the inverse of $D_\alpha$ can be decomposed in the same way.
        \end{proof}
        
		Since the compact perturbation of the sesquilinear form is independent of $\alpha$, one gets an analogous decomposition result to \Cref{thm_regularity_per}.
        \begin{theorem}\label{thm_regularity_perturbed}
        	If the right hand side $\J f$ is analytical in $\alpha \in \overline{I_{}}$, the function $u_\alpha = \J u(\alpha, \cdot)$, where $u \in \tilde{H}^1(\Omega^R)$ solves the (perturbed) variational problem \ref{prob_Var1}, is analytically dependent on $\alpha \in \overline{I_{}} \setminus \mathcal{A}$. For any $\hat{\alpha} \in \mathcal{A}$, one can find a $j_0 \in \Z^{d-1}$, a neighborhood $U(\hat{\alpha})$ of $\hat{\alpha}$, and two analytical functions $u^{(1)}_\alpha$ and $u^{(2)}_\alpha$, such that $u_\alpha$ can be written as
			\begin{equation}\label{eq_pertReg_represent}
				u_\alpha
				= u^{(1)}_\alpha + \sqrt{k^2 - |\alpha + j_0|^2} u^{(2)}_\alpha
				\quad\text{for }
				\alpha \in U(\hat{\alpha})
				.
			\end{equation}
		\end{theorem}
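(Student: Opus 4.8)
The plan is to deduce \Cref{thm_regularity_perturbed} from the unperturbed regularity statement \Cref{thm_regularity_per} by absorbing the local perturbation into the source term. Let $u\in\tilde{H}^1(\Omega^R)$ be the unique solution of the perturbed \Cref{prob_Var1} guaranteed by \Cref{thm_whole}. Since $n^2=n_p^2+q$, moving the term $\int_{\Omega^R}k^2 q u\,\overline v\dd x$ from the left to the right side of \eqref{eq_Var1} shows that this same $u$ solves \Cref{prob_Var1} with $q=0$ (i.e.\ with $n^2$ replaced by $n_p^2$) and the modified source $\tilde f:=f+k^2 q u$; moreover $\tilde f\in L^2(\Omega^R)$, since $q\in L^\infty(\Omega^R)$ has compact support and $u\in H^1(\Omega^R)\hookrightarrow L^2(\Omega^R)$. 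By \Cref{thm_equivalence} applied with source $\tilde f$, together with \Cref{lemma_pktExistence} and the uniqueness of \eqref{eq_VarPointwise}, the Bloch--Floquet component $u_\alpha=\J u(\alpha,\cdot)$ then solves the pointwise quasi-periodic problem \eqref{eq_VarPointwise} with right-hand side $(\J\tilde f)(\alpha,\cdot)$ for every $\alpha\in\overline{I_{}}$.

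The second step is to check that $\alpha\mapsto(\J\tilde f)(\alpha,\cdot)$ is analytic on $\overline{I_{}}$. Writing $\J\tilde f=\J f+k^2\,\J(q u)$, the first summand is analytic by hypothesis. For the second, the decisive observation is that $\supp q\subseteq\Omega_0^{R_0}=(-\pi,\pi)^{d-1}\times(0,R_0)$ is contained in a single periodicity cell, so that in the series $\J(qu)(\alpha,\tx,x_d)=\sum_{j\in\Z^{d-1}}(qu)(\tx+2\pi j,x_d)\,e^{2\pi\ii\alpha\cdot j}$ only the index $j=0$ contributes when $\tx$ ranges over the fundamental domain; hence $\J(qu)(\alpha,\cdot)=q\,u|_{\Omega_0^{R_0}}$ is \emph{independent of $\alpha$}, in particular analytic as an $L^2(\Omega_0^R)$-valued (equivalently $(\tilde{H}^1_\alpha(\Omega_0^R))^{*}$-valued) map. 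This is the precise sense in which the compact perturbation of the sesquilinear form ``is independent of $\alpha$''.

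The third step is to re-run the proof of \Cref{thm_regularity_per} with this analytic source. That argument does more than produce an analytic solution from an analytic source: near any $\hat\alpha\in\mathcal A$ it splits the operator $D_\alpha$ associated with $a_\alpha$ as $D_\alpha=D^{(1)}_\alpha+\sqrt{k^2-|\alpha+j_0|^2}\,D^{(2)}_\alpha$ with $D^{(1)}_\alpha,D^{(2)}_\alpha$ analytic, whence the Neumann series yields $D_\alpha^{-1}=R^{(1)}_\alpha+\sqrt{k^2-|\alpha+j_0|^2}\,R^{(2)}_\alpha$ with $R^{(1)}_\alpha,R^{(2)}_\alpha$ analytic on a neighborhood $U(\hat\alpha)$ (and $D_\alpha^{-1}$ analytic on $\overline{I_{}}\setminus\mathcal A$). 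Applying this to the analytic family $(\J\tilde f)(\alpha,\cdot)$ yields $u_\alpha=u^{(1)}_\alpha+\sqrt{k^2-|\alpha+j_0|^2}\,u^{(2)}_\alpha$ on $U(\hat\alpha)$ with $u^{(i)}_\alpha:=R^{(i)}_\alpha(\J\tilde f)(\alpha,\cdot)$ analytic, and analyticity of $\alpha\mapsto u_\alpha$ on $\overline{I_{}}\setminus\mathcal A$, which is exactly \eqref{eq_pertReg_represent}.

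I expect the only delicate point to be the first step, and within it the apparent circularity of defining $\tilde f$ through the unknown $u$. This is harmless: $u$ is fixed once and for all by \Cref{thm_whole}, so $\tilde f$ is simply a fixed $L^2$ source and no fixed-point argument is needed; the substantive input is the localisation $\supp q\subseteq\Omega_0^{R_0}$, which is what makes $\J(qu)$ genuinely $\alpha$-independent. Everything else is a direct appeal to results already established in \Cref{Sec22}.
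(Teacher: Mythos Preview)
Your proof is correct and follows essentially the same approach as the paper: both move the compactly supported perturbation $k^2 q u$ to the right-hand side, observe that its Bloch--Floquet transform is independent of $\alpha$ (hence analytic) because $\supp q$ lies in a single cell, and then invoke \Cref{thm_regularity_per} for the unperturbed operator with this analytic source. The paper phrases this via Riesz representations (an operator $K_q$ and $w=A^{-1}\tilde f+A^{-1}K_q w$) rather than working directly with the source $\tilde f=f+k^2qu$, but the content is identical.
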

		\begin{proof}
            Let $K_q \in \mathcal{L}(L^2(I_{}; \tilde{H}^1_{\alpha} (\Omega_0^R)))$
            be the Riesz representation of $(w \mapsto k^2q \J^{-1} w) \in \mathcal{L}(L^2(I_{}; \tilde{H}^1_{\alpha} (\Omega_0^R)), L^2(I_{}; \tilde{H}^1_{\alpha} (\Omega_0^R))')$,
            \[
                (K_q w, v)_{L^2(I_{}; \tilde{H}^1_{\alpha} (\Omega_0^R))}
                =    k^2 \int_{I_{}} \int_{\Omega^R_0} q \J^{-1} w \overline{v} \dd x \dd\alpha
                .
            \]
            The operator $K_q$ maps functions from $L^2(I_{}; \tilde{H}^1_{\alpha} (\Omega_0^R))$ to functions, which are independent of $\alpha$, and thus, in particular, analytical in $\alpha$.
            
            Let $w  = \J u\in L^2(I_{}; \tilde{H}^1_{\alpha} (\Omega_0^R))$ be the solution to the perturbed variational problem \ref{prob_Var1}, and $A \in \mathcal{L}(L^2(I_{}; \tilde{H}^1_{\alpha} (\Omega_0^R)))$ the Riesz representation of the unperturbed invertible differential operator for $q=0$. If we call the Riesz representation of the right hand side  as $\tilde{f}$, then it holds
            \[
                w
                = A^{-1} \tilde{f} + A^{-1}  {K}_q w
                \text{ in }
                L^2(I_{}; \tilde{H}^1_{\alpha} (\Omega_0^R))
                .
            \]
            Since the right hand side $\tilde{f}$ and the function ${K}_q w$ are analytical in $\alpha$, \Cref{thm_regularity_per} implies that $w$ can be represented in the form of \eqref{eq_pertReg_represent}.
		\end{proof}
        
        In \cite{Zhang2018} you can find comparable results for the sound-soft obstacle scattering problem and a detailed description, how to use the regularity to get a better convergence of the discretized inverse Bloch-Floquet transform.
         
        \begin{remark}
            One can extend the regularity result in \Cref{thm_regularity_perturbed} easily for the case that the right hand site $f_\alpha$ can be decomposed in the same way as $f_\alpha = f_\alpha^{(1)} + \sqrt{k^2 - |\alpha + j_0|^2} f_\alpha^{(2)}$, where $f_\alpha^{(1)}$ and $f_\alpha^{(2)}$ are analytical in $\alpha$. 
        \end{remark}
    }

	\section{The Inverse Problem}\label{Sec3}
    {
        In this section we consider the inverse problem of reconstructing the perturbation. 
        For that, we will consider the operator, which maps the perturbation to the solution operator for every right hand side $f \in L^2(\Omega^R)$ with the support in one periodic cell $\Omega_0^{R_0}$.
        
        At first, we will define the domain of definition for the measurement operators. For that, notice that in $d=2,3$ the space $\tilde{H}^1(\Omega^R_0)$ is continuously embedded in $L^4(\Omega_0^R)$. Thus, for a $q_0 \in L^\infty(\Omega^R_0)$, $\Im q_0 \geq 0$, and $q \in L^2(\Omega_0^{R_0})$, such that $||q-q_0||_{L^2(\Omega_0^{R_0})} < \delta$ with $\delta > 0$ small, for every $u$ and $v \in \tilde{H}^1(\Omega^R)$ it holds the estimation
        \begin{align*}
            |a_q(u, v)|
            &\leq |a_{q_0}(u, v)| + k^2 ||q-q_0||_{L^2(\Omega^R_0)} ||u||_{L^4(\Omega^R_0)}||v||_{L^4(\Omega^R_0)}
            \\
            &\leq C(q_0)||u||_{H^1(\Omega^R)}||v||_{H^1(\Omega^R)} + C ||q-q_0||_{L^2(\Omega^R_0)} ||u||_{H^1(\Omega^R_0)}||v||_{H^1(\Omega^R_0)}
            \\
            &\leq (C(q_0) + C \delta)||u||_{H^1(\Omega^R)}||v||_{H^1(\Omega^R)}
            .
        \end{align*}
        Consequently, for a small $\delta(q_0)$, the sesquilinear form $a_q$ is a small perturbation of $a_{q_0}$, and the Neumann series argument guaranties the invertibility of the differential operator for perturbation $q$ of $n^2_p$.
        Since we need an open set as the domain of definition of the measurement operators, and the inversion methods for inverse problems depend on Hilbert spaces, we define the domain of definition $Q$ as 
        \[
            Q :=
                \bigcup_{\substack{q_0 \in L^\infty(\Omega_0^R) \\ \Im q_0 \geq 0}}
                    B_{\delta(q_0)}(q_0)
            \subseteq L^2(\Omega_0^{R_0})
                    ,
        \]
        where $B_{\delta(q_0)}(q_0) \subseteq L^2(\Omega_0^R)$ is an open ball in $L^2(\Omega^R_0)$ around a perturbation $q_0 \in L^\infty(\Omega_0^R)$ with the radius $\delta(q_0)$ depending on $q_0$. Because of the Neumann series and the continuity of the sesquilinear form, the solution operator is well-defined for every $q \in Q$.
        \begin{definition}
            Consider the linear and bounded operator $\Lambda_{q_0} : L^2(\Omega_0^{R_0}) \to L^2(\Omega_0^R)$, which maps a right hand side $f \in L^2(\Omega_0^{R_0})$ to the restriction of the solution $u_{q_0} \in \tilde{H}^1(\Omega^R)$ of \Cref{prob_Var1} with $q = q_0$ to $\Omega_0^R$. We define the first measurement operator as 
            \begin{align*}
            	\Lambda : Q \subseteq L^2(\Omega_0^R) &\to \mathcal{L}(L^2(\Omega_0^{R_0}),  L^2(\Omega_0^R))
            	,
            	\\
            	q &\mapsto \Lambda_{q},
           	\end{align*}
           	mapping the perturbation $q \in Q$ to the operator $\Lambda_{q}$.
        \end{definition}
        
        \begin{definition}
  			Let $\tilde{\Lambda}_{q_0} : L^2(\Omega^{R_0}) \to \tilde{H}^1(\Omega_0^R)$ be the operator from above with codomain $\tilde{H}^1(\Omega_0^R)$ and let $\gamma_{\Gamma_0^R} : \tilde{H}^1(\Omega^R) \to H^{\nicefrac{1}{2}}(\Gamma_0^R)$ be the trace operator, restricted to $\Gamma_0^R$. We define the second measurement operator 
            \begin{align*}
				\mathcal{S} : Q &\to \mathcal{L}(L^2(\Omega_0^{R_0}),  L^2(\Gamma_0^R))
				,
            	\\
				q &\mapsto \gamma_{\Gamma_0^R} \circ \tilde{\Lambda}_{q}
				,
           	\end{align*}
			which only measures the scattered field on one periodic cell of the upper boundary.
        \end{definition}
    }
	\subsection{Uniqueness of the inverse problem}
    {
        In this section, we will proof the injectivity of both operators $\Lambda$ and $\mathcal{S}$.
        \begin{theorem}
            Consider two perturbations $q_1$ and $q_2 \in Q$. Then it holds:
            \[
                \text{ If } 
                \Lambda(q_1) 
                = \Lambda_{q_1} 
                = \Lambda_{q_2} 
                = \Lambda(q_2)
                ,\ 
                \text{ then }
                q_1 = q_2
                .
	        \]
		\end{theorem}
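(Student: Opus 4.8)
The plan is to derive from the equality of the two solution operators that the scattered fields generated by $q_1$ and $q_2$ coincide on one periodic cell $\Omega_0^R$ for every $L^2$ right-hand side supported in $\Omega_0^{R_0}$, and then to parlay this into a statement about all of $\R^d_+$. Concretely, fix an arbitrary $f \in L^2(\Omega_0^{R_0})$ and let $u_1 = u_{q_1}$ and $u_2 = u_{q_2}$ be the corresponding solutions of Problem~\ref{prob_Var1} with refractive indices $n_p^2 + q_1$ and $n_p^2 + q_2$. The hypothesis $\Lambda_{q_1} = \Lambda_{q_2}$ says exactly that $u_1 = u_2$ on $\Omega_0^R$. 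The difference $w := u_1 - u_2$ then satisfies $\Delta w + k^2 n_p^2 w = -k^2 q_2 u_2 + k^2 q_1 u_1$ in $\R^d_+$; since $q_1, q_2$ are supported in $\Omega_0^{R_0} \subseteq \Omega_0^R$ and $w$ vanishes on $\Omega_0^R \supseteq \Omega_0^{R_0}$, the right-hand side vanishes outside $\Omega_0^R$ and $w$ itself vanishes on the slab $\Omega_0^R$. So $w$ solves the \emph{unperturbed} Helmholtz equation $\Delta w + k^2 n_p^2 w = 0$ in $\R^d_+ \setminus \overline{\Omega_0^R}$, with Dirichlet data zero on $\Gamma_0^R$ and satisfying the radiation condition \eqref{eq_radiationCond}.

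The next step is to propagate the vanishing of $w$ from $\Omega_0^R$ to the rest of the upper half space. Above $\Gamma^R$ we have $n_p^2 = 1$, so $w$ is a radiating solution of the homogeneous Helmholtz equation in $\{x_d > R\}$ whose trace $w|_{\Gamma^R}$ and normal derivative $\partial_{x_d} w|_{\Gamma^R}$ vanish on $\Gamma_0^R$; but the trace on all of $\Gamma^R$ need not obviously vanish. Here I would use the unique continuation principle: $w$ solves a second-order elliptic equation with (locally) bounded coefficients on the connected open set $\R^d_+ \setminus \{x_d = 0\}$ minus nothing — more carefully, on any ball contained in $\R^d_+$ — and since $w \equiv 0$ on the open set $\Omega_0^R$, unique continuation forces $w \equiv 0$ on the whole connected component, hence on all of $\R^d_+$. (This is the same unique-continuation tool already invoked in the proof of Lemma~\ref{lemma_pktExistence}.) Therefore $u_1 = u_2$ everywhere in $\R^d_+$, and in particular $\nabla u_1 = \nabla u_2$ a.e.

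Finally, subtract the two governing equations on $\R^d_+$: since $\Delta u_1 + k^2(n_p^2+q_1)u_1 = -f = \Delta u_2 + k^2(n_p^2+q_2)u_2$ and $u_1 = u_2 =: u$, we obtain $k^2(q_1 - q_2)\, u = 0$ a.e.\ in $\R^d_+$, i.e.\ $(q_1 - q_2) u = 0$. To conclude $q_1 = q_2$ I must exhibit, for a.e.\ point of $\Omega_0^{R_0}$, at least one choice of $f$ whose associated solution $u$ is nonzero there; it suffices to show $u$ cannot vanish on an open subset of $\Omega_0^{R_0}$ for \emph{all} admissible $f$. The cleanest route is again unique continuation run in reverse: if $u$ vanished on an open ball $B \subseteq \Omega_0^{R_0}$, then (as $u$ solves $\Delta u + k^2 n^2 u = -f$ with $f$ supported in $\Omega_0^{R_0}$) one argues that $u$ vanishes outside $\supp f$ too; choosing $f$ supported in a small ball disjoint from $B$ makes $u$ a solution of the homogeneous equation near $B$, so $u \equiv 0$ in the component — but then $u$ vanishes on the support of $f$ as well, and testing the equation against $u$ gives $\int f \overline{u} = 0$ for that particular $f$, which combined with the jump/source structure yields a contradiction (e.g.\ pick $f = \overline{u}$ localized, or use that $\Lambda_q f$ determines $u$ which cannot be identically zero when $f \neq 0$ by well-posedness). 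I expect the main obstacle to be making this last argument fully rigorous: ensuring that the family $\{u_{q} f : f \in L^2(\Omega_0^{R_0})\}$ has no common zero set of positive measure in $\Omega_0^{R_0}$, which is where one genuinely uses the freedom in $f$ together with unique continuation and the well-posedness guaranteed by the Neumann-series construction of $\Lambda_q$.
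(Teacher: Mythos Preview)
Your unique-continuation step to propagate $u_1=u_2$ from $\Omega_0^R$ to all of $\R^d_+$ is essentially the paper's argument; the only imprecision is that you should apply unique continuation on the connected open set $\R^d_+\setminus\overline{\Omega_0^{R_0}}$ (where the source term for $w$ vanishes) rather than on $\R^d_+\setminus\overline{\Omega_0^{R}}$, because you need the set to contain an open piece on which $w$ is already known to vanish, namely $\Omega_0^R\setminus\overline{\Omega_0^{R_0}}$.

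The genuine gap is in your last paragraph. From $(q_1-q_2)u=0$ a.e.\ for every admissible $u$, you try to argue that the common zero set cannot have positive measure by choosing $f$ with small support and invoking unique continuation again. But the set $\{q_1\neq q_2\}$ is only measurable (recall $q_i\in L^2$, not $L^\infty$ or continuous) and need not contain any open ball, so there is no open set on which $u$ is forced to vanish and your unique-continuation argument cannot start. The paper avoids this completely with a one-line observation you are circling around but never land on: for \emph{any} $\phi\in C_0^\infty(\Omega_0^{R_0})$, set
\[
f:=-\Delta\phi-k^2(n_p^2+q_2)\phi\in L^2(\Omega_0^{R_0}),
\]
and then $\phi$ (extended by zero) is itself the unique solution $u_2$ of Problem~\ref{prob_Var1}. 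Hence the identity $(q_1-q_2)u_2=0$ becomes $(q_1-q_2)\phi=0$ for all test functions $\phi$, and $q_1=q_2$ follows immediately. This is exactly the ``freedom in $f$'' you invoke, but used constructively rather than through an indirect vanishing-set argument.
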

        \begin{proof}
            For a fixed right hand side $f \in L^2(\Omega_0^{R_0})$, we have two solutions $u_1$ for the variational problem \ref{prob_Var1} with $n^2 =n^2_p + q_1$ and $u_2$ the solution to the \Cref{prob_Var1} with $n^2 =n^2_p + q_2$. 
            Since $u_1$ equals $u_2$ on the set $\Omega_0^R$, the function $w := u_1 - u_2 \in \tilde{H}^1(\Omega^R)$ solves the problem
            \begin{equation*}
                \int_{\Omega^R} \nabla w \cdot \nabla \overline{v} - k^2 n_p^2 w \overline{v}  - k^2 q_1 w\overline{v} \dd x - \int_{\Gamma^R} T w  \overline{v} \dd S
                = \int_{\Omega^R} - k^2(q_1 - q_2) u_2\overline{v} \dd x
                ,
            \end{equation*}
            and vanishes, in especially, on $\Omega_0^R \setminus \Omega^{R_0}_0 \neq \emptyset$. Applying the theorem of unique continuation, it follows that $u_1 = u_2$ on $\Omega^R \setminus \Omega_0^R$, and consequently, the functions are identical on the whole domain $\Omega^R$.
            Thus, for every $v \in \tilde{H}^1(\Omega^R)$ it holds
            \begin{equation*}
                \int_{\Omega^R} (q_1 - q_2) u_2\overline{v} \dd x
                = 0
                ,
            \end{equation*}
            and the lemma of fundamental calculus implies $(q_1 - q_2) u_2 = 0$ almost everywhere. Since we can choose an arbitrary function $u_2 \in C^\infty_0(\Omega_0^{R_0})$, we conclude the identity $q_1 = q_2$ in $\R^d$.
        \end{proof}
        {
            In the case of $d=3$, and additional regularity of the parameter $n_p^2$ and $q$, we can moreover prove injectivity of the operator $\mathcal{S}$, at least, if the whole trace on $\Gamma^R$ is given as data instead of data on one periodic cell.
            For that, we utilize the so called complex geometrical optics. The following proposition is adapted from 
            \cite[Proposition~3.2]{IsakovLaiWang2016} (see also \cite{SylvesterUhlmann1988}).
            
            \begin{proposition}\label{thm_cgo}
                Let $D \subseteq \R^3$ be a bounded domain with Lipschitz boundary $\partial D$, $\xi \in \CC^3$ satisfying $\xi \cdot \xi = 0$ and $\rho \in H^2(D)$.
                Then there exist constants $C_0$ and $C_1$ depending on $D$, such that for $|\xi| > C_0||\rho||_{H^2(D)}$ there exists a solution $u$ of the form
                \begin{equation}\label{eq_cgo_form}
                    u(x)
                    = e^{\xi \cdot x} (1 + \psi_{\xi, \rho}(x))
                    ,
                \end{equation}
                which solves the equation
                \begin{equation*}
                    \Delta u + \rho u
                    = 0
                    \text{ in }
                    D
                \end{equation*}
                and satisfies
                \[
                    \psi_{\xi, \rho} \in H^2(D)
                    ,
                    \ 
                    ||\psi_{\xi, \rho}||_{H^2(D)}
                    \leq \frac{C_1}{|\xi|} ||\rho||_{H^2(D)}
                    .
                \]
            \end{proposition}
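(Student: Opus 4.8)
The plan is to carry out, in the $H^2$-scale, the classical construction of complex geometric optics solutions due to Sylvester and Uhlmann. First I would substitute $u = e^{\xi\cdot x} v$. Because $\nabla\cdot(e^{\xi\cdot x}\xi) = (\xi\cdot\xi)\,e^{\xi\cdot x} = 0$, a short computation gives $\Delta(e^{\xi\cdot x} v) = e^{\xi\cdot x}\,(\Delta + 2\xi\cdot\nabla)v$, so that $\Delta u + \rho u = 0$ in $D$ is equivalent to $(\Delta + 2\xi\cdot\nabla)v + \rho v = 0$. Inserting $v = 1 + \psi$ turns this into the inhomogeneous equation
\[
    (\Delta + 2\xi\cdot\nabla)\,\psi
    = -\rho\,(1 + \psi)
    \quad\text{in } D
    ,
\]
and it remains to produce a solution $\psi$ with $\|\psi\|_{H^2(D)} \le C_1|\xi|^{-1}\|\rho\|_{H^2(D)}$.

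The second and central step is the weighted resolvent estimate for $P_\xi := \Delta + 2\xi\cdot\nabla$. I would first extend $\rho$ to $\tilde\rho \in H^2(\R^3)$ with support in a fixed ball containing $D$ and $\|\tilde\rho\|_{H^2(\R^3)} \le C_D\|\rho\|_{H^2(D)}$ (a Stein extension for the Lipschitz domain $D$ followed by a cut-off). For $-1 < \delta < 0$ let $L^2_\delta$ and $H^2_\delta$ denote the Sobolev spaces with weight $(1+|x|^2)^{\delta/2}$. The Fourier symbol of $P_\xi$ is $-|\zeta|^2 + 2\ii\,\xi\cdot\zeta$, which vanishes only on the circle $\{\zeta \perp \Re\xi,\ |\zeta + \Im\xi| = |\Im\xi|\}$ of radius $\sim|\xi|$; the associated multiplier nevertheless defines, for $|\xi|\ge 1$, a bounded operator $G_\xi$ with $P_\xi G_\xi = \id$ and $\|G_\xi g\|_{L^2_\delta} \le C|\xi|^{-1}\|g\|_{L^2_{\delta+1}}$. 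Since $G_\xi$ is translation invariant it commutes with $\partial^\alpha$, and applying the estimate to $\partial^\alpha g$ for $|\alpha|\le 2$ upgrades it to $\|G_\xi g\|_{H^2_\delta} \le C|\xi|^{-1}\|g\|_{H^2_{\delta+1}}$.

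Third, I would solve the $\psi$-equation by a contraction argument in $H^2_\delta(\R^3)$. As $\tilde\rho$ is supported in a bounded set, where the weight is comparable to $1$, and $H^2(\R^3)$ is a Banach algebra in dimension three, multiplication by $\tilde\rho$ obeys $\|\tilde\rho\,h\|_{H^2_{\delta+1}} \le C\|\tilde\rho\|_{H^2(\R^3)}\|h\|_{H^2_\delta}$. Hence $\Phi(\psi) := -G_\xi(\tilde\rho\,(1+\psi))$ maps $H^2_\delta$ to itself with $\|\Phi(\psi)-\Phi(\psi')\|_{H^2_\delta} \le C|\xi|^{-1}\|\tilde\rho\|_{H^2(\R^3)}\|\psi-\psi'\|_{H^2_\delta}$. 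Choosing $C_0$ large enough that $|\xi| > C_0\|\rho\|_{H^2(D)}$ makes $\Phi$ a contraction with constant $\le 1/2$, which yields a unique fixed point $\psi \in H^2_\delta(\R^3)$ with $\|\psi\|_{H^2_\delta} \le 2\|\Phi(0)\|_{H^2_\delta} = 2\|G_\xi\tilde\rho\|_{H^2_\delta} \le C_1|\xi|^{-1}\|\rho\|_{H^2(D)}$. Its restriction $\psi_{\xi,\rho} := \psi|_D$ then lies in $H^2(D)$ (weighted and unweighted norms being equivalent on the bounded set $D$) and satisfies the asserted bound, while $u = e^{\xi\cdot x}(1+\psi_{\xi,\rho})$ solves $\Delta u + \tilde\rho u = 0$, hence $\Delta u + \rho u = 0$ in $D$.

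The only non-soft ingredient is the weighted bound for $G_\xi$ in the second step: its symbol degenerates on a codimension-two sphere of radius $\sim|\xi|$, and showing that the corresponding multiplier is still bounded from $L^2_{\delta+1}$ into $L^2_\delta$ with the gain $|\xi|^{-1}$ requires the careful (essentially one-dimensional) analysis of Sylvester--Uhlmann transverse to that set; I would simply invoke it, as the cited references do. The Ansatz, the extension operator, the algebra estimate and the Banach fixed point are then routine.
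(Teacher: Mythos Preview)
Your sketch is correct and is precisely the classical Sylvester--Uhlmann construction. The paper does not supply its own proof of this proposition: it is stated as a quotation from \cite[Proposition~3.2]{IsakovLaiWang2016} (with reference to \cite{SylvesterUhlmann1988}), and those sources proceed exactly along the lines you describe --- the Faddeev-type right inverse $G_\xi$ for $\Delta + 2\xi\cdot\nabla$ with the weighted $L^2_{\delta+1}\to L^2_\delta$ gain of $|\xi|^{-1}$, the Stein extension and cut-off to a compactly supported $\tilde\rho$, and the contraction in $H^2_\delta$ using that $H^2(\R^3)$ is a Banach algebra.
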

            
            \begin{theorem}
                Consider for $d=3$ two perturbations $q_1$ and $q_2 \in Q \cap C^2(\R^3_+)$ with compact support in $\R^3_+$, and assume $n_p^2 \in C^2_p(\Omega_0^R)$. If we call the solution operator $\tilde{\Lambda}_{q} : L^2(\Omega_0^{R_0}) \to \tilde{H}^1(\Omega^R)$ for $q \in Q$, and define $\tilde{\mathcal{S}}: Q \to \mathcal{L}(L^2(\Omega^{R_0}), L^2(\Gamma^R))$, $q \mapsto \gamma_{\Gamma^R} \circ \tilde{\Lambda}_{q}$, where $\gamma_{\Gamma^R}$ is the trace operator, then it holds:
                \[
                    \text{If }
                    \tilde{\mathcal{S}}(q_1) 
                    = \gamma_{\Gamma^R} \circ \tilde{\Lambda}_{q_1}
                    =  \gamma_{\Gamma^R} \circ \tilde{\Lambda}_{q_2}
                    = \tilde{\mathcal{S}}(q_2)
                    ,
                    \text{ then }
                    q_1 
                    = q_2
                    .
            \]
            \end{theorem}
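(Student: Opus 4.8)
The plan is to reduce the equality $\tilde{\mathcal S}(q_1)=\tilde{\mathcal S}(q_2)$ to an interior integral identity on a box that contains $\supp(q_1-q_2)$, to rewrite that identity by means of the (bilinear) transpose of the source-to-solution map, and to conclude with a single complex geometrical optics (CGO) solution supplied by \Cref{thm_cgo}. The three hypotheses --- $d=3$, $n_p^2,q_j\in C^2$, and that the \emph{whole} trace on $\Gamma^R$ (not only one cell) is measured --- each enter at a definite place.

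First I would fix $f\in L^2(\Omega_0^{R_0})$, put $u_j:=\tilde\Lambda_{q_j}f$ and $w:=u_1-u_2\in\tilde H^1(\Omega^R)$. For $x_d>R_0$ both indices equal $1$, so $\Delta w+k^2w=0$ there and $w$ satisfies the angular spectrum representation \eqref{eq_radiationCond}; since $w|_{\Gamma^R}=0$, the transform $\hat w(\cdot,R)$ vanishes, hence $w=0$ for $x_d>R$, and by analyticity $w\equiv0$ for $x_d>R_0$. Choose a box $D:=(-\pi-\varepsilon,\pi+\varepsilon)^2\times(\delta,R)$ with $0<\delta$ so small that $\overline D\subset\R^3_+$ and (using that $q_1,q_2$ have compact support in $\R^3_+$ and lie in $\overline{\Omega_0^{R_0}}$) $\supp(q_1-q_2)\subset D$. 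In $\R^3_+\setminus\overline{\supp(q_1-q_2)}$ one has $q_1=q_2$, so $\Delta w+k^2n_p^2w=0$ there; $\partial D$ lies entirely in the connected component of this set that contains $\{x_d>R_0\}$, so unique continuation gives $w\equiv0$ near $\partial D$. Thus $w$ and $\partial_\nu w$ vanish on $\partial D$, while in $D$ we have $\Delta w+k^2(n_p^2+q_1)w=-k^2(q_1-q_2)u_2$.

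Next, for every $v\in H^1(D)$ with $\Delta v+k^2(n_p^2+q_1)v=0$ in $D$, two integrations by parts using the vanishing Cauchy data of $w$ give $\int_D(q_1-q_2)\,u_2\,v\,\dd x=0$; letting $f$ vary, $\int_{\Omega^R}(\tilde\Lambda_{q_2}f)\,(q_1-q_2)v\,\dd x=0$ for all $f\in L^2(\Omega_0^{R_0})$. The bilinear form obtained from $a_{q_2}$ by omitting the complex conjugation is symmetric (the DtN symbol $\ii\sqrt{k^2-|\xi|^2}$ is even in $\xi$), so $\int_{\Omega^R}(\tilde\Lambda_{q_2}f)\,g\,\dd x=\int_{\Omega_0^{R_0}}f\,\psi_g\,\dd x$, where $\psi_g\in\tilde H^1(\Omega^R)$ solves \Cref{prob_Var1} with $q=q_2$ and right-hand side $g$. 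With $g:=(q_1-q_2)v\in L^2(\Omega^R)$ this forces $\psi_g=0$ a.e.\ on $\Omega_0^{R_0}$, and restricting $\Delta\psi_g+k^2(n_p^2+q_2)\psi_g=-(q_1-q_2)v$ to the open set $\Omega_0^{R_0}$ then yields $(q_1-q_2)\,v=0$ a.e.\ on $\Omega_0^{R_0}$. Finally, since $n_p^2\in C^2_p(\Omega_0^R)$ and $q_1\in C^2$, $\rho:=k^2(n_p^2+q_1)\in H^2(D)$, so for $\xi\in\CC^3$ with $\xi\cdot\xi=0$ and $|\xi|$ large \Cref{thm_cgo} gives a solution $v=e^{\xi\cdot x}(1+\psi_{\xi,\rho})$ of $\Delta v+\rho v=0$ in $D$ with $\|\psi_{\xi,\rho}\|_{H^2(D)}\le C_1|\xi|^{-1}\|\rho\|_{H^2(D)}$; by $H^2(D)\hookrightarrow C(\overline D)$ (here $d=3$ is used) $\|\psi_{\xi,\rho}\|_{L^\infty(D)}\to0$, so $|1+\psi_{\xi,\rho}|\ge\tfrac12$ on $\overline D$ for $|\xi|$ large and $v$ never vanishes on $\overline D$. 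Inserting this $v$ gives $q_1=q_2$ a.e.\ on $\Omega_0^{R_0}$, hence, by continuity and $\supp q_j\subset\overline{\Omega_0^{R_0}}$, on $\R^3_+$.

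The hard part is the first step: converting the boundary data into \emph{vanishing Cauchy data} of $w$ on all of $\partial D$. This is where the full trace on $\Gamma^R$ is genuinely needed --- through the angular spectrum representation it forces $w\equiv0$ for $x_d>R_0$ --- and where one must invoke unique continuation in the unbounded component of $\R^3_+\setminus\overline{\supp(q_1-q_2)}$, which calls for some care about the geometry of $\supp(q_1-q_2)$ and the behaviour of $w$ at horizontal infinity. A secondary technical point is making the transpose identity of the second step rigorous for the unbounded, radiating problem; the $C^2$ regularity is used only to place $\rho$ in $H^2(D)$, everything else working for merely $L^\infty$ data.
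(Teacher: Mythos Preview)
Your argument is correct (modulo the cosmetic slip that outside $\supp(q_1-q_2)$ the difference $w$ satisfies $\Delta w+k^2(n_p^2+q_1)w=0$, not $\Delta w+k^2n_p^2 w=0$; this does not affect the unique-continuation step). The route, however, differs genuinely from the paper's. After the same starting observation that $w$ has vanishing Cauchy data on the boundary of a box containing $\supp(q_1-q_2)$, the paper proceeds in the classical Calder\'on fashion: it introduces the homotopy $\rho(t,\cdot)=t(n_p^2+q_2)+(1-t)(n_p^2+q_1)$, differentiates a symmetric bilinear functional $B_{\rho(t,\cdot)}(f',g')$ in $t$, and derives the orthogonality relation
\[
\int_{\Omega^R}(q_2-q_1)\int_0^1 u^{f'}_{\rho(t,\cdot)}\,v^{g'}_{\rho(t,\cdot)}\dd t\dd x=0
\]
for all sources $f',g'$ supported outside the defect; it then inserts a \emph{pair} of CGO solutions with complementary phases $\xi^{1}=\ii(m+p)+l$ and $\xi^{2}=\ii(m-p)-l$, so that the product carries the oscillation $e^{2\ii m\cdot x}$, and concludes via the Fourier transform after sending $|p|\to\infty$. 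You bypass the homotopy completely: from $\int(q_1-q_2)(\tilde\Lambda_{q_2}f)\,v=0$ you use the bilinear symmetry of the source-to-solution map to infer that the radiating solution $\psi_g$ with source $g=(q_1-q_2)v$ vanishes on $\Omega_0^{R_0}$, hence so does $g$; a \emph{single} nowhere-vanishing CGO solution (via $H^2(D)\hookrightarrow C(\overline D)$ in dimension three) then forces $q_1=q_2$ pointwise. The paper's approach is the standard Sylvester--Uhlmann machinery and transplants to many related settings; yours is more direct here, avoids the density-of-products argument for CGO pairs, and makes clear why one CGO suffices --- at the price of exploiting the additional structure that the sources exhaust $L^2(\Omega_0^{R_0})$ and that the transpose of $\tilde\Lambda_{q_2}$ is again a solution operator of the same type.
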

            \begin{proof}
                For a fixed $f \in L^2(\Omega_0^{R_0})$, the operators $\tilde{\mathcal{S}}(q_1)$ and $\tilde{\mathcal{S}}(q_2)$ map the right hand side $f$ to the traces of the solutions $u_1$ and $u_2$ of \Cref{prob_Var1} with $n^2 = n^2_p + q_1$, or, $n^2 = n^2_p + q_2$, respectively. The functions $u_1$ and $u_2$ coincide on $\Gamma^R$ and, of course, on $\Gamma^0$. The difference $w := u_1 - u_2$ satisfies the equation
                \begin{align*}
                    \Delta w + k^2 (n_p^2 + q_1) w
                    &= \left[ \Delta u_1 + k^2 (n_p^2 + q_1) u_1 \right]
                      - \left[ \Delta u_2 + k^2 (n_p^2 + q_2) u_2 \right]
                      + k^2 (q_2 - q_1) u_2
                    \\
                    &= k^2 (q_2 - q_1) u_2
                \end{align*}
                with homogeneous Dirichlet boundary conditions.
                Since the upper trace determines the extension by the radiation condition, we can conclude that $w$ vanishes on an open set for some $R' > R > R_0 > 0$. The unique continuation theorem implies that the function $w$ vanishes on the biggest connected subset $D^c \subseteq \{q_1 = q_2 \} \subseteq \R^3$, which includes the boundary $\Gamma^R \cup \Gamma^0$. Hence $u_1 = u_2$ on $D^c$.
                Putting the functions into the sesquilinear form, we obtain
                \begin{align*}
                    \int_{\Gamma^R} T u_1 u_2  \dd S
                    &= \int_{\Omega^R} \nabla u_1 \cdot \nabla u_2 - k^2 (n_p^2 + q_1) u_1 u_2 \dd x
                        - \int_{\Omega^R} f u_2 \dd x
                    \\
                    &= \int_{\Omega^R} \nabla u_2 \cdot \nabla u_1 - k^2 (n_p^2 + q_2) u_2 u_1 \dd x
                        - \int_{\Omega^R} f u_1 \dd x
                    \\
                    &\quad \quad 
                    +\int_{\Omega^R} k^2 (q_2 - q_1) u_1 u_2 + f(u_1-u_2) \dd x
                    \\
                    &= \int_{\Gamma^R} T u_2 u_1  \dd S
                    +\int_{\Omega^R} k^2 (q_2 - q_1) u_1 u_2 + f(u_1-u_2) \dd x
                    .
                \end{align*}

                Now, we choose two arbitrary right hand sides $f'$ and $g' \in L^2(\Omega_0^{R_0})$ with support in $D^c$, and define, for $\rho(t, x) := t (n_p^2(x) + q_2(x)) + (1-t)(n_p^2(x) + q_1(x))$, $\rho \in C^\infty([0,1] ; L^2(\Omega_0^{R_0}))$, the two solutions to \Cref{prob_Var1} with $n^2 = \rho(t, \cdot)$ as $u^{f'}_{\rho(t, \cdot)}$, or, $v^{g'}_{\rho(t, \cdot)}$, respectively. Then it holds
                \begin{align*}
                    B_{\rho(t, \cdot)} (f', g')
                    &:= \int_{\Omega^R} f' v^{g'}_{\rho(t, \cdot)} + g' u^{f'}_{\rho(t, \cdot)} \dd x
                    \\
                    &= 2 \int_{\Omega^R} \nabla u^{f'}_{\rho(t, \cdot)} \cdot \nabla v^{g'}_{\rho(t, \cdot)} - k^2 \rho u^{f'}_{\rho(t, \cdot)} v^{g'}_{\rho(t, \cdot)} \dd x
                        - 2 \int_{\Gamma^R} T u^{f'}_{\rho(t, \cdot)} v^{g'}_{\rho(t, \cdot)}  \dd S
                    .
                \end{align*}
                Since $u^{f'}_{\rho(0, \cdot)} = u^{f'}_{\rho(1, \cdot)}$ and $v^{g'}_{\rho(0, \cdot)} = v^{g'}_{\rho(1, \cdot)}$ on $D^c$, as we showed earlier, and both function $f'$ and $g'$ are chosen to be zero on $D$, the complement of $D^c$, we have
                \begin{align*}
                    B_{\rho(1, \cdot)} (f', g') - B_{\rho(0, \cdot)} (f', g')
                    = \int_{\Omega^R} f' (v^{g'}_{\rho(1, \cdot)} - v^{g'}_{\rho(0, \cdot)}) + g' (u^{f'}_{\rho(1, \cdot)} - u^{f'}_{\rho(0, \cdot)}) \dd x
                    = 0
                    .
                \end{align*}
                Set $u := u^{f'}_{\rho(t, \cdot)}$, $v := v^{g'}_{\rho(t, \cdot)}$, $u' := \frac{\partial}{\partial t} u$ and $v' := \frac{\partial}{\partial t} v$ for now, then, the upper equation implies
                \begin{align*}
                    0 
                    &= \frac{1}{2} B_{\rho(1, \cdot)} (f', g') - \frac{1}{2}  B_{\rho(0, \cdot)} (f', g')
                    = \frac{1}{2} \int_0^1 \frac{\partial}{\partial t} B_{\rho(t, \cdot)} (f', g') \dd t
                    \\
                    &
                    = \int_0^1 
                        \int_{\Omega^R} \nabla u' \cdot \nabla v + \nabla  u \cdot \nabla v' - k^2 \frac{\partial}{\partial t} \rho u v - k^2 \rho u' v - k^2 \rho u v' \dd x 
                    \\
                    & \quad \quad \quad - \int_{\Gamma^R} T u' v + T v' u  \dd S
                        \dd t
                    .
                \end{align*}
                Differentiating the variational problem \ref{prob_Var1} for $n^2 = \rho$ w.r.t.\ $t$, we conclude that $u'$ and $v'$ solve the problem
                \begin{align*}
                    \Delta w' + k^2 \rho w'
                    &= -k^2 \frac{\partial} { \partial t} \rho w
                    &&\text{in }
                    \Omega^R
                    \\
                    \frac{\partial }{\partial x_3} w'
                    &= T(w')
                    &&\text{on }
                    \Gamma^R
                    \\
                    w'
                    &= 0
                    &&\text{on }
                    \Gamma^0
                    .
                \end{align*}
                Since the derivative of $\rho$ w.r.t.\ $t$ is given by $\frac{\partial }{\partial t} \rho = (q_2 - q_1)$, we obtain
                \begin{equation}\label{eq_intQisZero}
                    \int_{\Omega^R}  (q_2 - q_1) \int_0^1 u^{f'}_{\rho(t, \cdot)}  v^{g'}_{\rho(t, \cdot)} \dd t \dd x
                    = 0
                \end{equation}
                for every $f'$ and $g' \in L^2(\Omega_0^{R_0})$ with $f'\big|_{D} = g'\big|_{D} = 0$.
                
                The assumption that $d=3$, allows us to choose vectors $\xi^i \in \CC^3$, $i = 1,2$ such that the norms $|\xi^i|^2$ are large for $i=1,2$, and both can be decomposed into
                \[
                    \xi^1 
                    = \ii (m + p) + l
                    \text{ and }
                    \xi^2 
                    = \ii (m - p) - l
                \]
                with pairwise orthogonal real vectors $l$, $m$ and $p$, such that $|l|^2 = |m|^2 + |p|^2$.
                Choosing some Lipschitz domain $\tilde{D} \supseteq D$, \Cref{thm_cgo} gives us two functions $u$ and $v \in H^2(\tilde{D})$ of the form \eqref{eq_cgo_form}. 
                Multiplying a cut-off function $\chi \in C^\infty(\R^3)$ to the functions $u$ and $v$, which fulfills $\chi\big|_{D} = 1$ and $\chi\big|_{\R^3 \setminus \tilde{D}} = 0 $, one can see that these functions are solutions to \Cref{prob_Var1} with suitable right hand sides $f'$ and $g'$ supported in $\Omega_0^{R_0} \setminus D$.
                Inserting these two functions into \eqref{eq_intQisZero}, we obtain
                \[
                    \int_{\Omega^R} (q_2 - q_1) e^{2 \ii m \cdot x} \left(1 + \mathcal{O}\left(\frac{1}{|p|}\right)\right) \dd x 
                    = 0
                    .
                \]
                Letting $|p|$ go to infinity, we deduce that the Fourier transform of the function $(q_1 -q_2)$ equals to zero. 
                Consequently, the identity $q_1 = q_2$ holds everywhere in $\R^3$.
            \end{proof}
        }        
	}
    
	\subsection{Fr\'echet differentiability and ill-posedness of the inverse problem}
    {
        In the following, we will apply an inexact Newton-method, called {CG-REGINN} (\cite{Rieder2005}), to reconstruct the shape of the perturbation. For that, we prove differentiability of the measurement operators $\Lambda$ and $\mathcal{S}$ as well as the ill-posedness of the inverse problems.
        \begin{theorem}
			Fix $q \in Q$ and let $u_{f}$ be the solution of \Cref{prob_Var1} for the right hand side $f \in L^2(\Omega_0^{R_0})$. Furthermore, for $h \in L^2(\Omega_0^{R_0})$ let $W_h : L^2(\Omega_0^{R_0}) \to L^2(\Omega_0^R)$, $f \mapsto w_{h, f}\big|_{\Omega_0^R}$, be the operator mapping $f$ to the solution $w_{h, f}$ for the \Cref{prob_Var1} with the right hand side $ k^2 h u_{f}$ replacing $f$, i.e., the function $w_{h, f}$ solves 
            \[
                a_q(w_{h, f}, v)
                = \int_{\Omega_0^R} k^2 h u_{f} \dd x
            \]
            for all $v \in \tilde{H}^1(\Omega^R)$.

			Then, the derivative of $\Lambda$ is given by 
            \begin{align*}
                \Lambda'(q) 
                &\in \mathcal{L}(L^2(\Omega_0^R), \mathcal{L}(L^2(\Omega_0^{R_0}),  L^2(\Omega_0^R)))
                \\
                h &\mapsto W_h
                .
            \end{align*}
		\end{theorem}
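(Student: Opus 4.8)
The plan is to read off Fréchet differentiability directly from the variational formulation, using that for every $q\in Q$ the sesquilinear form $a_q$ induces a boundedly invertible operator on $\tilde{H}^1(\Omega^R)$ and that this inverse is uniformly bounded on a small ball around $q$ in $L^2(\Omega_0^{R_0})$ — which is exactly the Neumann-series statement recorded just before the definition of $Q$. Throughout I write $u_f:=u_{q,f}$ for the solution of \Cref{prob_Var1} with parameter $n_p^2+q$ and right-hand side $f\in L^2(\Omega_0^{R_0})$, and $u_f^h$ for the solution with parameter $n_p^2+q+h$ and the same $f$; for $\|h\|_{L^2(\Omega_0^{R_0})}$ small enough both exist and $\|u_f^h\|_{H^1(\Omega^R)}\le C\|f\|_{L^2(\Omega_0^{R_0})}$ with $C$ independent of $h$.

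First I would derive the equation for the difference. Subtracting the two variational problems and using the identity $a_{q+h}(w,v)=a_q(w,v)-\int_{\Omega_0^R}k^2 h\,w\,\overline v\,dx$, one obtains $a_q(u_f^h-u_f,v)=\int_{\Omega_0^R}k^2 h\,u_f^h\,\overline v\,dx$ for all $v\in\tilde{H}^1(\Omega^R)$, whereas by definition $a_q(w_{h,f},v)=\int_{\Omega_0^R}k^2 h\,u_f\,\overline v\,dx$. Hence the remainder $r_{h,f}:=u_f^h-u_f-w_{h,f}$ solves $a_q(r_{h,f},v)=\int_{\Omega_0^R}k^2 h\,(u_f^h-u_f)\,\overline v\,dx$.

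Then I would estimate. Since in $d=2,3$ the embedding $\tilde{H}^1(\Omega^R)\hookrightarrow L^4(\Omega_0^R)$ is continuous, the functional $v\mapsto\int_{\Omega_0^R}k^2 h\,g\,\overline v\,dx$ has norm at most $C\|h\|_{L^2(\Omega_0^{R_0})}\|g\|_{L^4(\Omega_0^{R_0})}$ on $\tilde{H}^1(\Omega^R)$. Applied with $g=u_f^h$ together with the uniform invertibility of $a_q$, this gives $\|u_f^h-u_f\|_{H^1(\Omega^R)}\le C\|h\|_{L^2(\Omega_0^{R_0})}\|f\|_{L^2(\Omega_0^{R_0})}$; applied with $g=u_f^h-u_f$ it gives $\|r_{h,f}\|_{H^1(\Omega^R)}\le C\|h\|_{L^2(\Omega_0^{R_0})}\|u_f^h-u_f\|_{L^4(\Omega_0^{R_0})}\le C\|h\|_{L^2(\Omega_0^{R_0})}^2\|f\|_{L^2(\Omega_0^{R_0})}$. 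Taking the supremum over $\|f\|_{L^2(\Omega_0^{R_0})}\le 1$ and restricting to $\Omega_0^R$ yields $\|\Lambda_{q+h}-\Lambda_q-W_h\|_{\mathcal{L}(L^2(\Omega_0^{R_0}),L^2(\Omega_0^R))}\le C\|h\|_{L^2(\Omega_0^{R_0})}^2=o(\|h\|_{L^2(\Omega_0^{R_0})})$, which is precisely Fréchet differentiability with derivative $h\mapsto W_h$. It remains to note that $h\mapsto W_h$ is linear (the right-hand side $k^2 h\,u_f$ is linear in $h$ and \Cref{prob_Var1} is linear) and bounded, the same $L^4$-estimate giving $\|W_h\|_{\mathcal{L}(L^2(\Omega_0^{R_0}),L^2(\Omega_0^R))}\le C\|h\|_{L^2(\Omega_0^{R_0})}$, so that indeed $\Lambda'(q)\in\mathcal{L}(L^2(\Omega_0^R),\mathcal{L}(L^2(\Omega_0^{R_0}),L^2(\Omega_0^R)))$.

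The main obstacle — a mild one — is the bookkeeping forced by $h\in L^2$ only: the product $h\,u$ cannot be paired against an $H^1$ test function directly, and it is the Hölder splitting $L^4$–$L^2$–$L^4$ combined with the Sobolev embedding (valid exactly because $d\le 3$) that produces the two factors of $\|h\|_{L^2(\Omega_0^{R_0})}$ in the remainder and hence genuine differentiability rather than mere directional differentiability. Everything else reduces to the linear theory of \Cref{prob_Var1} and the uniform invertibility of $a_q$ already available on $Q$.
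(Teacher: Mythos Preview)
Your proof is correct. The approach differs from the paper's in presentation rather than substance: the paper argues abstractly — it passes to the Riesz representation $B_q\in\mathcal L(\tilde H^1(\Omega^R))$ of the sesquilinear form, observes that $q\mapsto B_q$ is affine (hence smooth), and then quotes the general fact that inversion $B\mapsto B^{-1}$ is Fréchet differentiable with derivative $H\mapsto -B^{-1}HB^{-1}$; unwinding $-\Lambda_qB_q'[h]\Lambda_q$ gives exactly $W_h$. You instead compute the remainder $r_{h,f}=u_f^h-u_f-w_{h,f}$ by hand, derive its variational equation, and bound it directly via the $L^2$--$L^4$--$L^4$ H\"older splitting and the Sobolev embedding, obtaining the explicit quadratic estimate $\|r_{h,f}\|_{H^1}\le C\|h\|_{L^2}^2\|f\|_{L^2}$. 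What your route buys is precisely that explicit remainder bound: it is the same estimate the paper later establishes separately to verify the tangential cone condition (\Cref{thm_ill_posedness}), so your argument delivers both differentiability and the tangential cone in one stroke. The paper's abstract route is shorter at this point but defers the concrete estimate to the ill-posedness section.
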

		\begin{proof}
            Applying the Riesz theorem, we can reformulate the variational problem \ref{prob_Var1} as
            \begin{align*}
                (B_{q} u, v)_{\tilde{H}^1(\Omega_0^R)}
                = (g, v)_{\tilde{H}^1(\Omega_0^R)}
                \quad\text{for all }
                u,v \in {\tilde{H}^1(\Omega^R)}
                ,
            \end{align*}
            where $B_{q} \in \mathcal{L}(\tilde{H}^1(\Omega_0^R))$ is the Riesz representation of the differential operator of \Cref{prob_Var1} and $g$ the Riesz representation of $f$.
            One can check easily that the sesquilinear form is Fr\'echet differentiable w.r.t.\  the perturbation $q$. It follows that the operator $q \mapsto B_{q}$ has a Fr\'echet derivative $q \mapsto B_{q}' \in \mathcal{L}(L^2(\Omega_0^{R_0}), \mathcal{L}(\tilde{H}^1(\Omega_0^R)) )$.
            
            Since the operator $B_{q}$ is invertible for every $q \in Q$, a corollary of the Neumann series argument implies that the operator $q \mapsto \Lambda_q = B_{q}^{-1}$ is also Fr\'echet differentiable and the linearization can be written as $\Lambda'(q)[h] = -\Lambda_q B'_{q}h \Lambda_q$, which corresponds to the claiming representation.
		\end{proof}
        
        As a consequence, we obtain the differentiability of the measurement operator $\mathcal{S}$.
        \begin{corollary}
			The forward operator $\mathcal{S}$ is Fr\'echet differentiable in $q \in Q$. 
			The derivative is given by $\mathcal{S}'q \in \mathcal{L}(L^2(\Omega_0^{R_0}), \mathcal{L}(L^2(\Omega_0^{R_0}),  L^2(\Gamma_0^R)))$, which maps a function $h \in L^2(\Omega_0^{R_0})$ to the operator $\gamma_{\Gamma_0^R} \circ \tilde{W}_h : L^2(\Omega^{R_0}) \to L^2(\Gamma_0^R)$, where $\tilde{W}_h : L^2(\Omega_0^{R_0}) \to \tilde{H}^1(\Omega_0^R)$ does the same as $W_h$, just mapping to $\tilde{H}^1(\Omega_0^R)$.
        \end{corollary}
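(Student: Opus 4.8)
The plan is to derive this corollary directly from the preceding theorem together with the chain rule for Fréchet derivatives, observing that $\mathcal{S}$ is by construction the post-composition of the solution operator $q\mapsto\tilde{\Lambda}_q$ with the fixed bounded linear trace operator $\gamma_{\Gamma_0^R}$.

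First I would note that the preceding theorem in fact yields Fréchet differentiability of the $\tilde{H}^1(\Omega_0^R)$-valued solution operator $q\mapsto\tilde{\Lambda}_q$, and not merely of its $L^2(\Omega_0^R)$-restriction $\Lambda$: its proof is carried out entirely at the level of the Riesz representation $B_q\in\mathcal{L}(\tilde{H}^1(\Omega_0^R))$, showing that $q\mapsto B_q$ is Fréchet differentiable into $\mathcal{L}(\tilde{H}^1(\Omega_0^R))$ and, since $B_q$ is boundedly invertible for every $q\in Q$ by the Neumann series argument, that $q\mapsto B_q^{-1}$ is Fréchet differentiable with $(B_\cdot^{-1})'(q)[h]=-B_q^{-1}B_q'[h]B_q^{-1}$. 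Composing on the right with the fixed Riesz map $f\mapsto g$ then gives that $q\mapsto\tilde{\Lambda}_q\in\mathcal{L}(L^2(\Omega_0^{R_0}),\tilde{H}^1(\Omega_0^R))$ is Fréchet differentiable with derivative $h\mapsto\tilde{W}_h$, where $\tilde{W}_h$ is precisely the operator in the statement, i.e.\ $\tilde{W}_h f=w_{h,f}$ solves $a_q(w_{h,f},v)=\int_{\Omega_0^R}k^2h\,u_f\,\overline{v}\dd x$ for all $v\in\tilde{H}^1(\Omega^R)$.

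Next I would apply the chain rule. The trace operator $\gamma_{\Gamma_0^R}:\tilde{H}^1(\Omega_0^R)\to H^{\nicefrac{1}{2}}(\Gamma_0^R)\hookrightarrow L^2(\Gamma_0^R)$ is bounded and linear, hence Fréchet differentiable with derivative equal to itself, and the map $A\mapsto\gamma_{\Gamma_0^R}\circ A$ from $\mathcal{L}(L^2(\Omega_0^{R_0}),\tilde{H}^1(\Omega_0^R))$ into $\mathcal{L}(L^2(\Omega_0^{R_0}),L^2(\Gamma_0^R))$ is again bounded and linear. Since $\mathcal{S}(q)=\gamma_{\Gamma_0^R}\circ\tilde{\Lambda}_q$, the composition is Fréchet differentiable in $q\in Q$ and
\[
    \mathcal{S}'(q)[h]=\gamma_{\Gamma_0^R}\circ\bigl(\tilde{\Lambda}'(q)[h]\bigr)=\gamma_{\Gamma_0^R}\circ\tilde{W}_h\in\mathcal{L}(L^2(\Omega_0^{R_0}),L^2(\Gamma_0^R)),
\]
which is the asserted formula; the remainder estimate required for Fréchet differentiability is inherited from that of $\tilde{\Lambda}$ after multiplication by $\|\gamma_{\Gamma_0^R}\|$.

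I do not anticipate a genuine obstacle here. The only point needing care is the bookkeeping of the nested operator spaces and the remark that the differentiability in the preceding theorem must be read off at the level of $\tilde{H}^1$-valued (rather than $L^2$-valued) solution operators, so that post-composition with $\gamma_{\Gamma_0^R}$ is admissible — which is immediate from the structure of that proof, as it is conducted in $\mathcal{L}(\tilde{H}^1(\Omega_0^R))$ throughout. Everything else is the standard fact that composing a Fréchet differentiable map with a fixed bounded linear map preserves differentiability and transforms the derivative by that same linear map.
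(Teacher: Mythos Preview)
Your proposal is correct and is exactly the argument the paper has in mind: the corollary is stated without proof, introduced only by the sentence ``As a consequence, we obtain the differentiability of the measurement operator $\mathcal{S}$,'' so you have simply spelled out the implicit chain-rule step of post-composing the $\tilde{H}^1$-valued differentiability from the preceding theorem with the bounded linear trace $\gamma_{\Gamma_0^R}$. Your care in noting that the preceding proof already operates at the $\tilde{H}^1$ level (via the Riesz representation $B_q$) is precisely the point that makes the corollary immediate.
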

	}    	
	{
    
		In the rest of the section, we show that the measurement operator $\Lambda$, and its derivative $\Lambda'$, yields an locally ill-posed inverse problem by proving that the operator satisfies the tangential cone condition. The local ill-posedness of the inverse problem for $\mathcal{S}$ and its derivative $\mathcal{S}'$ can be showed analogously, which we will sketch afterwards.
		
		For a general (non-linear) operator $\Phi : \mathcal{D}(\Phi) \subseteq X \to Y$ between Banach spaces $X$ and $Y$, the operator $\Phi$ is called locally ill-posed in $x^* \in \mathcal{D}(\Phi)$, if for all $r>0$, there exists a sequence $\{x_n\}_{n \in \N} \subseteq B_r(x^*) \cap \mathcal{D}(\Phi)$, such that $||\Phi(x_n) - \Phi(x^*)||_{Y} \to 0$ but $||x_n - x^*||_{X} \not \to 0$ for $n \to \infty$ (\cite[Definition~3.15]{Thomas2012}).
		
		To prove ill-posedness, we will show that $\Lambda : Q \to \mathcal{L}(L^2(\Omega_0^{R_0}),  L^4(\Omega_0^R))$ is locally ill-posed, which implicates that also $\Lambda : Q \to \mathcal{L}(L^2(\Omega_0^{R_0}),  L^2(\Omega_0^R))$ is locally ill-posed. For that, we first show ill-posedness of the linear operator $\Lambda'q_0: L^2(\Omega_0^{R_0})\to \mathcal{L}(L^2(\Omega_0^{R_0}),  L^4(\Omega_0^R))$ for a $q_0 \in Q$, and  conclude afterwards that the inverse problem for $\Lambda$ is locally ill-posed by proving the tangential cone condition.
		
		\begin{lemma}\label{lemma_FrechetIllPosed}
			The operator $\Lambda'(q) : L^2(\Omega_0^{R_0}) \to \mathcal{L}(L^2(\Omega_0^{R_0}), L^4(\Omega_0^R))$ is a compact operator for all $q \in Q$. 
            In particular, the linearized operator equation is locally ill-posed in $\mathcal{L}(L^2(\Omega_0^{R_0}), L^4(\Omega_0^R))$.
		\end{lemma}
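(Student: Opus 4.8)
The plan is to show that $\Lambda'(q)$ is compact by writing it as a composition involving a compact Sobolev embedding, and then to invoke the standard fact that a linear operator equation with compact forward operator between infinite-dimensional spaces is locally ill-posed. First I would recall from the preceding theorem that $\Lambda'(q)[h] = -\Lambda_q B'_q h \,\Lambda_q$; equivalently, $\Lambda'(q)[h]$ maps $f$ to $W_h f = w_{h,f}\big|_{\Omega_0^R}$, where $w_{h,f} \in \tilde H^1(\Omega^R)$ solves $a_q(w_{h,f},v) = \int_{\Omega_0^R} k^2 h\, u_f\,\overline v\,\mathrm dx$. The key structural observation is that $w_{h,f}$ is genuinely an $\tilde H^1(\Omega^R)$-function (by unique solvability of Problem~\ref{prob_Var1}, which holds for all $q \in Q$ by the Neumann-series argument), while the codomain asks only for its restriction to $\Omega_0^R$ in the $L^4$-norm. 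Since $d = 2,3$, the embedding $\tilde H^1(\Omega_0^R) \hookrightarrow L^4(\Omega_0^R)$ is compact (Rellich--Kondrachov), and this is the source of compactness.

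Concretely, I would factor $\Lambda'(q)$ as follows. The map $h \mapsto \tilde W_h \in \mathcal L(L^2(\Omega_0^{R_0}), \tilde H^1(\Omega_0^R))$ is bounded and \emph{linear} in $h$: indeed $\tilde W_h f = \tilde\Lambda_q\big(k^2 h\, u_f\big)$ where $u_f = \tilde\Lambda_q f$, and $\|\tilde W_h f\|_{\tilde H^1(\Omega_0^R)} \le C\|\tilde\Lambda_q\|\,k^2\|h u_f\|_{L^2(\Omega_0^R)} \le C' \|h\|_{L^2}\|u_f\|_{L^4} \le C''\|h\|_{L^2}\|f\|_{L^2}$, using $\tilde H^1(\Omega_0^R) \hookrightarrow L^4(\Omega_0^R)$ for the factor $u_f$. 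So $h \mapsto \tilde W_h$ is bounded into $\mathcal L(L^2, \tilde H^1(\Omega_0^R))$. Post-composing with the restriction/embedding $\tilde H^1(\Omega_0^R) \hookrightarrow L^4(\Omega_0^R)$, which induces a compact operator $\mathcal L(L^2, \tilde H^1(\Omega_0^R)) \to \mathcal L(L^2, L^4(\Omega_0^R))$ on the relevant (bounded, since $u_f$ depends on $f$ through a fixed operator — one should check this gives a relatively compact family) set, yields $\Lambda'(q)$. The cleanest way to phrase the compactness is: let $(h_n)$ be bounded in $L^2(\Omega_0^{R_0})$; the solutions $w_{h_n, \cdot}$ are uniformly bounded in $\mathcal L(L^2(\Omega_0^{R_0}), \tilde H^1(\Omega_0^R))$, and by the compact embedding a subsequence converges in $\mathcal L(L^2(\Omega_0^{R_0}), L^4(\Omega_0^R))$; hence $\Lambda'(q)$ is compact as claimed.

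Once compactness is established, the local ill-posedness follows from the general principle (e.g.\ \cite[Definition~3.15]{Thomas2012} and the surrounding discussion): a compact linear operator $\Lambda'(q)$ between the infinite-dimensional Hilbert spaces $L^2(\Omega_0^{R_0})$ and $\mathcal L(L^2(\Omega_0^{R_0}), L^4(\Omega_0^R))$ cannot have a bounded inverse, so there is a bounded (even orthonormal) sequence $(h_n)$ with $h_n \not\to 0$ but $\Lambda'(q)[h_n] \to 0$; taking $q_n := q + r h_n / \|h_n\|$ for any $r>0$ exhibits the defining property of local ill-posedness at $q$.

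The main obstacle I anticipate is \emph{not} the compactness of the embedding but the book-keeping of the operator-valued codomain: one must argue that boundedness of $(h_n)$ in $L^2$ makes the family $\{w_{h_n,f} : \|f\|_{L^2(\Omega_0^{R_0})} \le 1\}$ relatively compact in $L^4(\Omega_0^R)$ \emph{uniformly in $f$}, i.e.\ that a single subsequence works for the operator norm and not merely pointwise in $f$. This is handled by noting the bilinear dependence $(h,f)\mapsto w_{h,f}$ factors through $h u_f \in L^2$ with $u_f$ ranging in a \emph{fixed} bounded subset of $\tilde H^1(\Omega_0^R)$ (the image of the unit ball under $\tilde\Lambda_q$), so the whole family lands in a fixed bounded subset of $\tilde H^1(\Omega_0^R)$ for each fixed $h_n$, and the compact embedding plus a diagonal argument over a countable dense set of $f$ upgrades pointwise to norm convergence. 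The remaining estimates (continuity of $a_q$, invertibility of $B_q$ on $Q$, the $H^1 \hookrightarrow L^4$ inequality) are exactly those already used in the definition of $Q$ and need only be cited.
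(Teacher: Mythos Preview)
Your overall strategy --- exploit the compact Sobolev embedding $\tilde H^1(\Omega_0^R)\hookrightarrow L^4(\Omega_0^R)$ --- is the right one, and it is also what the paper does. However, the step where you assert that a bounded sequence $(\tilde W_{h_n})$ in $\mathcal L(L^2,\tilde H^1(\Omega_0^R))$ has a subsequence converging in $\mathcal L(L^2,L^4(\Omega_0^R))$ is not justified: post-composition with a compact operator $K\colon\tilde H^1\to L^4$ does \emph{not} induce a compact map $\mathcal L(L^2,\tilde H^1)\to\mathcal L(L^2,L^4)$. For a counterexample take $T_nf=\langle f,e_n\rangle\, v$ with a fixed $0\ne v\in\tilde H^1$ and $(e_n)$ orthonormal in $L^2$; then $\|KT_n-KT_m\|\ge\|Kv\|_{L^4}>0$ for $n\ne m$, so no subsequence is Cauchy. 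Your proposed repair --- a diagonal argument over a countable dense set of $f$ --- only extracts a subsequence along which $W_{h_n}f$ converges for each $f$ (strong operator topology); it does \emph{not} give operator-norm convergence, and the same rank-one example shows why pointwise convergence cannot be upgraded this way. Incidentally, the paper's own proof slides over the identical issue: it establishes $W_{h_j}f\to0$ in $L^4$ for each fixed $f$ and then invokes ``Banach--Steinhaus'' to conclude that $W_{h_j}\to0$, but that theorem yields only uniform boundedness, not norm convergence.

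The missing idea is a \emph{second} use of compactness to obtain the uniformity in $f$ (and in the test function $v$). From the variational formulation and the continuous embedding one has
\[
\|W_{h_n}\|_{\mathcal L(L^2,L^4)}\le C\sup_{\substack{\|f\|_{L^2}\le1\\ \|v\|_{\tilde H^1(\Omega^R)}\le1}}\left|\int_{\Omega_0^{R_0}} h_n\,u_f\,\overline v\dd x\right|.
\]
The set $\{u_f|_{\Omega_0^{R_0}}:\|f\|\le1\}$ is relatively compact in $L^4(\Omega_0^{R_0})$ (bounded in $H^1$ plus Rellich), and so is $\{v|_{\Omega_0^{R_0}}:\|v\|_{\tilde H^1}\le1\}$; hence the set of products $\{u_f\overline v\}$ is relatively compact in $L^2(\Omega_0^{R_0})$, being the continuous image under multiplication $L^4\times L^4\to L^2$ of a relatively compact set. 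Since weak convergence $h_n\rightharpoonup0$ in $L^2$ is uniform on relatively compact subsets of $L^2$, the supremum above tends to zero, and compactness of $\Lambda'(q)$ follows. This closes the gap in both your proposal and the paper's argument; your derivation of local ill-posedness from compactness is then fine.
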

		\begin{proof}
            Let $\{ h_j \}_{j \in \N} \subseteq L^2(\Omega_0^{R_0})$ be a weakly convergent sequence, which means that for every functional $\psi \in L^2(\Omega_0^{R_0})'$, it holds $\psi(h_j) \to 0$ for $j \to \infty$.
            Thus, the right hand side $k^2 h_j \Lambda_q f$ converges weakly in $H^{-1}(\Omega^R)$ to zero for every $f \in L^2(\Omega_0^{R_0})$.
            The Sobolev space $H_0^1(\Omega_0^R)$ is compactly embedded in $L^4(\Omega_0^R)$, such that for every $f \in L^2(\Omega_0^{R_0})$ the sequence of solutions $\{W_{h_j} f \}$ converges to zero in $L^4(\Omega_0^R)$ for $j \to \infty$. Applying the theorem of Banach-Steinhaus, we conclude that the sequence of operators  $\{W_{h_j} \}$ converges to zero. Thus, $\Lambda'(q)$ is a compact operator.
		\end{proof}

		\begin{theorem}\label{thm_ill_posedness}
			The inverse problem related to the operator $\Lambda : Q \to \mathcal{L}(L^2(\Omega_0^{R_0}), L^2(\Omega_0^R))$ is locally ill-posed.
		\end{theorem}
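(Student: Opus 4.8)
The plan is to deduce local ill-posedness of the nonlinear operator $\Lambda$ from the already-established compactness of its Fréchet derivative $\Lambda'(q)$ in \Cref{lemma_FrechetIllPosed}, using the so-called tangential cone condition as the bridge. Concretely, I would fix an arbitrary $q_0 \in Q$ at which ill-posedness is to be shown, pick an arbitrary radius $r > 0$, and aim to produce a sequence $\{q_n\}_{n\in\N} \subseteq B_r(q_0) \cap Q$ with $\|\Lambda(q_n) - \Lambda(q_0)\|_{\mathcal{L}(L^2(\Omega_0^{R_0}),\,L^4(\Omega_0^R))} \to 0$ but $\|q_n - q_0\|_{L^2(\Omega_0^{R_0})} \not\to 0$.

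First I would record the tangential cone estimate: there is a constant $c_{tc} < 1$ (possibly after shrinking $r$) such that
\begin{equation*}
    \|\Lambda(q_1) - \Lambda(q_2) - \Lambda'(q_2)[q_1 - q_2]\|
    \leq c_{tc}\,\|\Lambda(q_1) - \Lambda(q_2)\|
\end{equation*}
for all $q_1, q_2 \in B_r(q_0) \cap Q$. This follows from the explicit linearization formula $\Lambda'(q)[h] = -\Lambda_q B'_q h \Lambda_q$ derived above: writing $\Lambda_{q_1} - \Lambda_{q_2} = -\Lambda_{q_1}(B_{q_1} - B_{q_2})\Lambda_{q_2}$ and comparing with $\Lambda'(q_2)[q_1-q_2] = -\Lambda_{q_2}(B_{q_1}-B_{q_2})\Lambda_{q_2}$, the difference is $-(\Lambda_{q_1} - \Lambda_{q_2})(B_{q_1}-B_{q_2})\Lambda_{q_2}$, whose operator norm is controlled by $\|B_{q_1}-B_{q_2}\| \lesssim \|q_1-q_2\|_{L^2}$ times $\|\Lambda_{q_1}-\Lambda_{q_2}\|$; the uniform bounds on $\Lambda_q$ on $Q$ (Neumann series, as used in the definition of $Q$) and the smallness of $r$ give $c_{tc}<1$. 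The tangential cone inequality then yields the two-sided comparison
\begin{equation*}
    \tfrac{1}{1+c_{tc}}\,\|\Lambda'(q_0)[q_n - q_0]\|
    \leq \|\Lambda(q_n) - \Lambda(q_0)\|
    \leq \tfrac{1}{1-c_{tc}}\,\|\Lambda'(q_0)[q_n - q_0]\|,
\end{equation*}
so that $\|\Lambda(q_n)-\Lambda(q_0)\| \to 0$ if and only if $\|\Lambda'(q_0)[q_n-q_0]\| \to 0$.

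Now I would invoke \Cref{lemma_FrechetIllPosed}: since $\Lambda'(q_0)$ is compact on the infinite-dimensional Hilbert space $L^2(\Omega_0^{R_0})$, it cannot have a bounded inverse; equivalently, there is a sequence of unit vectors $\{h_n\}$ with $h_n \rightharpoonup 0$ weakly and $\|\Lambda'(q_0)[h_n]\| \to 0$ (e.g. an orthonormal sequence, using that a compact operator maps weakly convergent sequences to norm-convergent ones). Set $q_n := q_0 + \tfrac{r}{2} h_n$. Then $q_n \in B_r(q_0)$, and after possibly shrinking $r$ once more we have $q_n \in Q$ since $Q$ is open around $q_0$; moreover $\|q_n - q_0\|_{L^2} = \tfrac{r}{2} \not\to 0$, while $\|\Lambda'(q_0)[q_n - q_0]\| = \tfrac{r}{2}\|\Lambda'(q_0)[h_n]\| \to 0$, hence by the comparison above $\|\Lambda(q_n) - \Lambda(q_0)\| \to 0$. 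This exhibits exactly the sequence required by the definition of local ill-posedness, completing the proof; the corresponding statement for the linearized equation is the last sentence of \Cref{lemma_FrechetIllPosed} itself, and by the same argument $\Lambda : Q \to \mathcal{L}(L^2(\Omega_0^{R_0}), L^2(\Omega_0^R))$ is locally ill-posed because the $L^2(\Omega_0^R)$-norm is dominated by the $L^4(\Omega_0^R)$-norm on the bounded domain $\Omega_0^R$.

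The main obstacle I anticipate is the careful verification of the tangential cone condition with a constant strictly below $1$: one must track how the radius $r$ and the base point $q_0$ enter the operator bounds $\|\Lambda_q\|$ and $\|B_q - B_{q'}\|$, using the $L^2 \hookrightarrow L^4$ embedding in $d=2,3$ to control the quadratic term $k^2(q_1-q_2)u\overline v$, and confirm that the same $r$ can be chosen to keep $q_n$ inside $Q$. Everything else is a standard functional-analytic packaging of compactness plus the cone estimate, and can be stated briefly.
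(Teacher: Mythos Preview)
Your proposal is correct and follows essentially the same strategy as the paper: establish the tangential cone condition in the $L^4$ image space, combine it with the compactness of $\Lambda'(q_0)$ from \Cref{lemma_FrechetIllPosed} to transfer ill-posedness from the linearized to the nonlinear problem, and then pass from $L^4$ to $L^2$ by the embedding. The only cosmetic differences are that you derive the cone estimate via the resolvent-type identity $\Lambda_{q_1}-\Lambda_{q_2}=-\Lambda_{q_1}(B_{q_1}-B_{q_2})\Lambda_{q_2}$ whereas the paper writes out the variational problem solved by $w=u_q-u_{q^*}-w_h$ (these are the same computation in different notation), and that you construct the ill-posedness sequence by hand from an orthonormal system whereas the paper cites an external result for this step.
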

		\begin{proof}
			We show that the operator $\Lambda$ satisfies the \emph{tangential cone condition}, which means that for some $q_0 \in Q$ there exist a constant $0 \leq \omega < 1$ and an $r > 0$, such that
			\[
				||\Lambda(q) - \Lambda(q^*) - \Lambda'(q^*)[q-q^*] ||_{\mathcal{L}(L^2(\Omega_0^{R_0}), L^4(\Omega_0^R))}
				\leq \omega ||\Lambda(q) - \Lambda(q^*)||_{\mathcal{L}(L^2(\Omega_0^{R_0}), L^4(\Omega_0^R))}
			\]
			holds for all $q$ and $q^* \in B_r(q_0) \cap Q$. Applying the triangle inequality, one deduces the relation
			
			\[
				1 - \omega
				\leq \frac{||\Lambda'(q^*)[q-q^*] ||_{\mathcal{L}(L^2(\Omega_0^{R_0}), L^4(\Omega_0^R))}}{||\Lambda(q) - \Lambda(q^*)||_{\mathcal{L}(L^2(\Omega_0^{R_0}), L^4(\Omega_0^R))}}
				\leq 1+ \omega
			\]
			for $q \neq q^*$. This, on the other hand, implies, together with \cite[Theorem~4.5]{Gerken2017}, that the local ill-posedness of the inverse problem $\Lambda(q) = \Lambda_q$ follows from the ill-posedness of the inverse problem for the Fr\'echet derivative $\Lambda'(q)$, which we showed in \Cref{lemma_FrechetIllPosed}.
			
			Fix a right hand side $f \in L^2(\Omega_0^{R_0})$ and set $u_q\in \tilde{H}^1(\Omega^R)$ as well as $u_{q^*} \in \tilde{H}^1(\Omega^R)$ as the solutions to \Cref{prob_Var1} for $n^2=n^2_p + q$, or, $n^2=n^2_p + q^*$, respectively. Moreover, let $w_h\in \tilde{H}^1(\Omega^R)$ be the solution to \Cref{prob_Var1} for $n^2=n^2_p + q^*$ and right hand side $k^2 h u_{q^*}$.
			If we define $w$ as $w:=u_q - u_{q^*} - w_h$, then it holds
			\[
				|| \Lambda_{q} f - \Lambda_{q^*} f - W_{(q-q^*)} f||_{L^4(\Omega_0^R)}
				\leq C|| w||_{H^1(\Omega_0^R)}
				.
			\]
			The function $w$ solves the variational problem
			\[
				\int_{\Omega^R} \nabla w \cdot \nabla \overline{v} - k^2 (n^2_p + q^*)w \overline{v} \dd x - \int_{\Gamma^R} T(w \big|_{\Gamma^R}) \overline{v} \dd S
				= \int_{\Omega_0^R} k^2 (q - q^*) (u_{q} - u_{q^*})\overline{v} \dd x
			\]
			for every $v \in \tilde{H}^1(\Omega^R)$.
			Consequently, it holds
			\begin{equation*}
				||w||_{H^1(\Omega_0^R)}
                \leq C k^2 || q - q^*||_{L^2(\Omega_0^{R_0})} || u_q - u_{q^*}||_{L^4(\Omega_0^R)}
                = C k^2 || q - q^*||_{L^2(\Omega_0^{R_0})} || \Lambda_q f - \Lambda_{q^*} f||_{L^4(\Omega_0^R)}
                .
			\end{equation*}
			If the distance $||q - q^*||_{L^2(\Omega_0^{R_0})}$ is small enough, we can set $\omega := C k^2 ||q - q^*||_{{L^2(\Omega_0^{R_0})}} < 1$, wherefrom the tangential cone condition follows, if we take the supremum on both sides:
			\[
				|| \Lambda_{q} - \Lambda_{q^*}  - W_{(q-q^*)} ||_{\mathcal{L}(L^2(\Omega_0^{R_0}), L^4(\Omega_0^R))}
				\leq \omega || \Lambda_q  - \Lambda_{q^*} ||_{\mathcal{L}(L^2(\Omega_0^{R_0}), L^4(\Omega_0^R))}
				.
			\]
            We conclude that the operator $\Lambda : Q \to  \mathcal{L}(L^2(\Omega_0^{R_0}), L^4(\Omega_0^R))$ is locally ill-posed, and thus, $\Lambda : Q \to  \mathcal{L}(L^2(\Omega_0^{R_0}), L^2(\Omega_0^R))$ is also locally ill-posed.
		\end{proof}
        
        The ill-posedness of the inverse problem related to the operator $\mathcal{S}$ can be shown analogously, which we will summarize in the next corollary.
        \begin{corollary}
			The inverse problem related to the operator $\mathcal{S} : Q \to \mathcal{L}(L^2(\Omega_0^{R_0}),  L^2(\Gamma_0^R))$ is locally ill-posed.
        \end{corollary}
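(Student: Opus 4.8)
The plan is to transcribe the proof of \Cref{thm_ill_posedness} almost line by line, replacing the target norm $\|\cdot\|_{L^4(\Omega_0^R)}$ by $\|\gamma_{\Gamma_0^R}\,\cdot\,\|_{L^2(\Gamma_0^R)}$ throughout, and using the boundedness of the trace operator $\gamma_{\Gamma_0^R}\colon\tilde H^1(\Omega^R)\to H^{\nicefrac12}(\Gamma_0^R)\hookrightarrow L^2(\Gamma_0^R)$ where the proof for $\Lambda$ used the compact embedding $\tilde H^1(\Omega_0^R)\hookrightarrow L^4(\Omega_0^R)$. As for $\Lambda$, the two ingredients are: (i) compactness of the Fr\'echet derivative $\mathcal{S}'(q)$, which already makes the \emph{linearized} problem ill-posed; and (ii) the tangential cone condition for $\mathcal{S}$ near some $q_0\in Q$; together with \cite[Theorem~4.5]{Gerken2017} these yield local ill-posedness of the nonlinear problem for $\mathcal{S}$ and of its linearization.

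For ingredient (i) I would copy \Cref{lemma_FrechetIllPosed}: if $h_j\rightharpoonup 0$ in $L^2(\Omega_0^{R_0})$ then, for every fixed $f\in L^2(\Omega_0^{R_0})$, the right-hand side $k^2 h_j u_f$ converges weakly to $0$ in $H^{-1}(\Omega^R)$ (test against $v\in H^1_0(\Omega_0^R)\hookrightarrow L^4(\Omega_0^R)$ and note $u_f\overline v\in L^2$), so $\tilde W_{h_j}f\rightharpoonup 0$ in $\tilde H^1(\Omega^R)$; composing with the trace, which maps into $H^{\nicefrac12}(\Gamma_0^R)$ and is therefore compact as a map into $L^2(\Gamma_0^R)$ (because $\Gamma_0^R$ is bounded), gives $\mathcal{S}'(q)[h_j]f=\gamma_{\Gamma_0^R}\tilde W_{h_j}f\to 0$ in $L^2(\Gamma_0^R)$, and the Banach--Steinhaus / precompactness argument of \Cref{lemma_FrechetIllPosed} upgrades this to operator-norm convergence. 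Hence $\mathcal{S}'(q)$ is compact for every $q\in Q$, its range is not closed, and the linearized equation is locally ill-posed.

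For ingredient (ii) I would, exactly as in \Cref{thm_ill_posedness}, fix $q,q^*\in B_r(q_0)\cap Q$ and $f\in L^2(\Omega_0^{R_0})$, set $h:=q-q^*$, let $w_h$ solve \Cref{prob_Var1} for $n^2=n_p^2+q^*$ with right-hand side $k^2 h\,u_{q^*}$, and put $w:=u_q-u_{q^*}-w_h\in\tilde H^1(\Omega^R)$. Then $w$ solves exactly the variational problem appearing in the proof of \Cref{thm_ill_posedness}, so $\|w\|_{H^1(\Omega_0^R)}\le C k^2\|q-q^*\|_{L^2(\Omega_0^{R_0})}\|u_q-u_{q^*}\|_{L^4(\Omega_0^R)}$, and boundedness of the trace yields
\[
 \bigl\|\mathcal{S}(q)f-\mathcal{S}(q^*)f-\mathcal{S}'(q^*)[q-q^*]f\bigr\|_{L^2(\Gamma_0^R)}
 =\|\gamma_{\Gamma_0^R}w\|_{L^2(\Gamma_0^R)}
 \le C'\|w\|_{H^1(\Omega_0^R)}.
\]
Taking the supremum over $\|f\|_{L^2(\Omega_0^{R_0})}\le 1$ reduces the tangential cone condition to bounding $\|u_q-u_{q^*}\|_{L^4(\Omega_0^R)}$ by the measured quantity $\|\mathcal{S}(q)-\mathcal{S}(q^*)\|$ and then choosing $\omega:=Ck^2\|q-q^*\|_{L^2(\Omega_0^{R_0})}<1$; the remainder (triangle inequality, the two-sided estimate $1-\omega\le\|\mathcal{S}'(q^*)[\,\cdot\,]\|/\|\mathcal{S}(q)-\mathcal{S}(q^*)\|\le 1+\omega$, and the appeal to \cite{Gerken2017}) is then verbatim.

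The main obstacle is exactly this last reduction, and it is the one genuine difference from the proof of \Cref{thm_ill_posedness}: for $\Lambda$ the quantity $\|u_q-u_{q^*}\|_{L^4(\Omega_0^R)}$ \emph{is} $\|\Lambda(q)f-\Lambda(q^*)f\|_{L^4(\Omega_0^R)}$, whereas for $\mathcal{S}$ one only controls $\|\gamma_{\Gamma_0^R}(u_q-u_{q^*})\|_{L^2(\Gamma_0^R)}$, which a priori does not dominate the interior norm $\|u_q-u_{q^*}\|_{L^4(\Omega_0^R)}$ (the one-cell trace map $f\mapsto\gamma_{\Gamma_0^R}u_q$ need not be bounded below). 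To close this I would exploit that $d:=u_q-u_{q^*}$ solves the \emph{homogeneous} equation $\Delta d+k^2 n_p^2 d=0$ on the connected set $\Omega^R\setminus\overline{\Omega_0^{R_0}}$ — which contains a full neighbourhood of $\Gamma_0^R$ — together with $d|_{\Gamma^0}=0$ and the radiation condition, and combine unique continuation with an interior (Caccioppoli-type / conditional-stability) estimate to control $\|d\|_{L^4(\Omega_0^R)}$ by the near-field data on $\Gamma_0^R$; alternatively, as is common for such partial near-field measurement operators, one works on the subclass of right-hand sides for which this stability holds. Everything else is a routine transcription of \Cref{lemma_FrechetIllPosed} and \Cref{thm_ill_posedness}.
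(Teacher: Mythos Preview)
Your ingredient (i) is fine, but ingredient (ii) does not close. The tangential cone condition for $\mathcal{S}$ would require a bound of the form
\[
  \|u_q-u_{q^*}\|_{L^4(\Omega_0^R)}\;\le\;C\,\|\gamma_{\Gamma_0^R}(u_q-u_{q^*})\|_{L^2(\Gamma_0^R)},
\]
and you correctly flag this as the obstacle. Your proposed fix via unique continuation/Caccioppoli/conditional stability does not deliver such a Lipschitz estimate: stability for the Cauchy problem from partial boundary data is at best H\"older or logarithmic, so you cannot obtain a constant $\omega<1$ independent of $q,q^*$ in a ball. The fallback ``work on a subclass of right-hand sides for which this stability holds'' is not a proof either, since the tangential cone condition must hold in operator norm, i.e.\ uniformly over all $f$.

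The paper avoids this obstacle altogether by changing the image space rather than the argument. Since the right-hand sides lie in $L^2(\Omega_0^{R_0})$, elliptic regularity puts $\Lambda_q f$ and $W_h f$ in $H^2(\Omega_0^R)$. One then reruns \Cref{lemma_FrechetIllPosed} and \Cref{thm_ill_posedness} with the image space $\mathcal{L}(L^2(\Omega_0^{R_0}),H^1(\Omega_0^R))$: compactness of $\Lambda'(q)$ now comes from $H^2(\Omega_0^R)\hookrightarrow H^1(\Omega_0^R)$ being compact, and the tangential cone condition is immediate because
\[
  \|w\|_{H^1(\Omega_0^R)}\le Ck^2\|q-q^*\|_{L^2}\,\|u_q-u_{q^*}\|_{L^4(\Omega_0^R)}\le C'k^2\|q-q^*\|_{L^2}\,\|u_q-u_{q^*}\|_{H^1(\Omega_0^R)},
\]
using only the Sobolev embedding $H^1\hookrightarrow L^4$ in the right direction. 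This gives local ill-posedness of $\tilde\Lambda:Q\to\mathcal{L}(L^2(\Omega_0^{R_0}),H^1(\Omega_0^R))$. Finally, since $\mathcal{S}(q)=\gamma_{\Gamma_0^R}\circ\tilde\Lambda(q)$ with $\gamma_{\Gamma_0^R}:H^1(\Omega_0^R)\to L^2(\Gamma_0^R)$ bounded, any sequence witnessing ill-posedness of $\tilde\Lambda$ in $H^1$ also witnesses ill-posedness of $\mathcal{S}$ in $L^2(\Gamma_0^R)$. No trace-to-interior stability is ever needed.
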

        \begin{proof}
            For  $q \in Q$ and $h \in L^2(\Omega_0^{R_0})$ the definition space of the operator $\Lambda_{q}$  and its Fr\'echet derivative $W_{h}$ is $L^2(\Omega_0^{R_0})$, such that both operators map into $H^2(\Omega^R_0)$.
            Thus, one can show analogously to \Cref{lemma_FrechetIllPosed} that the Fr\'echet derivative ${\Lambda}'q$ is a compact operator mapping into $\mathcal{L}(L^2(\Omega_0^{R_0}), H^1(\Omega_0^R))$, and further, one checks analogously to \Cref{thm_ill_posedness} that the tangential cone condition is satisfied for the image space $\mathcal{L}(L^2(\Omega_0^{R_0}), H^1(\Omega_0^R))$. Consequently, the inverse problem related to the operator $\tilde{\Lambda} : Q \to \mathcal{L}(L^2(\Omega_0^{R_0}),  H^1(\Omega_0^R))$ is ill-posed, which implicates that the inverse problem related to $\mathcal{S}$ is also ill-posed, since $ \mathcal{S} q = \gamma_{\Gamma_0^R} \circ \tilde{\Lambda} q$, where $\gamma_{\Gamma_0^R}$ is the trace operator.
        \end{proof}
    }
    
	\section{Numerical Solution Scheme and Reconstruction Method}\label{Sec4}
	{
        In this section, we discuss the discretization of the unbounded locally perturbed variational problem \ref{prob_Var1}, after applying the Bloch-Floquet transform to the variational formulation. To avoid having $\alpha$-dependent spaces $\tilde{H}^1_{\alpha}(\Omega_0^R)$, we will consider functions $w^p_\alpha \in \tilde{H}^1_p(\Omega_0^R)$, where $\tilde{H}^1_p(\Omega_0^R)$ is the space with $\alpha = 0$, instead of $w_\alpha \in \tilde{H}^1_{\alpha}(\Omega^R)$, since they can be identified by $w_\alpha (\tx, x_d) = e^{i \alpha \cdot \tx} w^p_\alpha(\tx, x_d)$. As the gradient $\nabla w_\alpha$ transforms to $(\nabla_x - \ii \alpha) w^p_\alpha$, the $\alpha$-quasi-periodic variational problem \eqref{eq_VarPointwise} for $w_\alpha$  is equivalently reformulated for ${w}^p_\alpha$ as 
        \begin{align}\label{eq_VarPointwiseAlpha}
            a'_\alpha(w^p_\alpha, \overline{v}) 
            &:=
            \int_{\Omega_0^R} \nabla_x w^p_\alpha \cdot \nabla_x \overline{v} 
            - \ii w^p_\alpha  \alpha \cdot \nabla_x  \overline{v} 
            + \ii \alpha \cdot \nabla_x  w^p_\alpha \overline{v} 
            + |\alpha|^2 w^p_\alpha \overline{v} 
            - k^2 n_p^2 w^p_\alpha \overline{v}
            \dd x
            \nonumber
            \\
            &\quad \quad \quad - \int_{\Gamma_0^R} T_\alpha(w^p_\alpha \big|_{\Gamma_0^R}) \overline{v} \big|_{\Gamma_0^R} \dd S
            =  \int_{\Omega_0^R} (\J f)(\alpha, \cdot) \overline{v} \dd x
        \end{align}
        for every $v \in \tilde{H}^1_p(\Omega_0^R)$, where the Dirichlet-to-Neumann operator $T_\alpha$ is defined in the same way, since the Fourier coefficients do not change. 
        We set
        \begin{equation*}
            b'(w, v)
            := - k^2 \int_{I_{}}\int_{\Omega_0^R} e^{\ii \alpha \cdot \tx} q (\J^{-1} w) \overline{v} \dd x \dd \alpha
        \end{equation*}
        for $w$, $v \in L^2(I_{}; \tilde{H}^1(\Omega_0^R))$,
        such that we can write the transformed \eqref{eq_VarUnterIntegral} problem as
        \begin{equation}\label{eq_perProblemUnderIntegral}
            \int_{I_{}} a'_\alpha(w(\alpha, \cdot), v(\alpha, \cdot)) \dd \alpha + b'(w, v)
            = \int_{I_{}}  \int_{\Omega_0^R} \J f \overline{v} \dd x \dd \alpha
            .
        \end{equation}
        Due to the perturbation, the sesquilinear form $b'$ couples the $\alpha$-quasi-periodic components of the transformed solution.
    }
    \subsection{Discretization of the scattering problem}
	{
        In this section, we discretize the variational problem \eqref{eq_perProblemUnderIntegral} as a family of problems, solved by finite elements method. 
        Let $\mathcal{T}$ be the triangulation of $\overline{\Omega_0^R} = [- \pi, \pi]^{d-1} \times [0, R]$, consisting of $2^{d \times M}$ hypercubes that satisfy $\overline{\Omega_0^R} = \bigcup_{T \in \mathcal{T}} \overline{T}$, where $M \in \N$ stands for refinement cycles. 
        Let ${\tilde{M}}$ be the number of nodal points $\{ x^{m} \}_{m=1}^{\tilde{M}} \subset \overline{\Omega^R_0}$, which are equidistant in every direction, and $\{ \phi_{\tilde{M}}^{m} \}_{m=1}^{\tilde{M}}$ the piecewise linear nodal functions, where  $\phi_{\tilde{M}}^{m}$ equals to one at the $m$-th nodal point $x^{m} $, and which equals to zero for other nodal points. Since the solution vanishes on the boundary $\Gamma_0^0$, we do not consider the nodal points there.
        Define the uniformly distributed grid points for $n = 1,2,\ldots,N^{d-1}$ as
        $
            \alpha_N^{n}
            := -\tfrac{1}{2} + \tfrac{1}{2N} + \tfrac{n}{N}
        $
        in the case of $d = 2$ and
        \[
            \alpha_N^{n}
            := \left(-\frac{1}{2} + \frac{1}{2N} + \frac{\lfloor\nicefrac{(n-1)}{N}\rfloor}{N}, -\frac{1}{2} + \frac{1}{2N} + \frac{(n-1) \mod N}{N} \right)
        \]
        in the case of $d=3$ as well as the nodal basis of functions $\{ \psi_N^{n} \}_{n=1}^{N^{d-1}}$, where $\psi_N^{n}$ equals to one on 
        $
            I_N^n
            :=  \alpha_N^{n} + [  \nicefrac{-1}{2N},  \nicefrac{1}{2N}]^{d-1}
        $
        and zero, otherwise. The finite element space $V_{N,{\tilde{M}}}$ is defined as
        \begin{equation}\label{eq_FiniElemRaum}
            V_{N,{\tilde{M}}}
            := 
                \left\{
                    \tilde{v}(\alpha, x)
                    = \sum_{n=1}^{N^{d-1}} \sum_{m=1}^{\tilde{M}} v^{n,m} e^{\ii \alpha_N^{n} \cdot x^{m} } \psi_N^{n}(\alpha) \phi_{\tilde{M}}^m(x)
                    , \ 
                    {v}^{n,m} \in \CC
                \right\}
            ,
        \end{equation}
        and we seek for a finite element solution $\tilde{w}\in V_{N,{\tilde{M}}}$, which solves
        \begin{equation*}
            \int_{I_{}} a'_\alpha(\tilde{w}(\alpha, \cdot), \tilde{v}(\alpha, \cdot)) \dd \alpha + b'(\tilde{w}, \tilde{v})
            = \int_{I_{}}  \int_{\Omega_0^R} \J f  \overline{\tilde{v}}\dd x \dd \alpha
            \quad\text{for all }
            \tilde{v} \in V_{N,{\tilde{M}}}
            .
        \end{equation*}
        For a function $\tilde{w} \in V_{N,{\tilde{M}}}$, the inverse operator $\J^{-1}$ of the Bloch-Floquet transform equals to the trapezoidal rule for integration, since
        \begin{align*}
            \J^{-1} \tilde{w}
            &= \sum_{j=1}^{N^{d-1}} \int_{I^n_N} \sum_{n=1}^{N^{d-1}} \sum_{m=1}^{\tilde{M}} e^{\ii \alpha_N^n \cdot x^{m}} {w}^{n,m} \psi_N^{n}(\alpha) \phi_{\tilde{M}}^m(x)
            \dd \alpha
            \\
            &= \frac{1}{N^{d-1}} \sum_{n=1}^{N^{d-1}} \sum_{m=1}^{\tilde{M}} e^{\ii \alpha_N^n \cdot x^{m}} {w}^{n,m}\phi_{\tilde{M}}^m(x)
            \\
            &=: \frac{1}{N^{d-1}} \sum_{m=1}^{\tilde{M}} u_{\tilde{M}}^{m} \phi_{\tilde{M}}^m(x)
            =: u_{\tilde{M}}(x)
            \\
            &=: \JRand_N^{-1} (\{ {w}^{n,m} \}_{n=1,\ldots,N^{d-1},m=1,\ldots,\tilde{M}})
            .
        \end{align*}
        For $n \in \{1, \ldots, N^{d-1}\}$, we approximate the value $\int_{I^n_N} a'_{\alpha} (\phi_{\tilde{M}}^l , \phi_{\tilde{M}}^m )  \dd \alpha$ by $\tfrac{1}{N^{d-1}} a'_{\alpha_N^n} (\phi_{\tilde{M}}^l , \phi_{\tilde{M}}^m )$ and $b'(\tilde{w}, \psi_N^n\phi_{\tilde{M}}^m)$ by
        \[
            b_N^n(\tilde{w}, \phi_{\tilde{M}}^m)
            :=
             \frac{- k^2}{N^{d-1}} \int_{\Omega_0^R} e^{\ii \alpha_N^n \cdot \tx} q \JRand_N^{-1} (\{ {w}^{n,m} \}) \phi_{\tilde{M}}^m \dd x
            = \frac{1}{N^{d-1}}\sum_{l=1}^{\tilde{M}} u_{\tilde{M}}^{l} b_N^n(\phi_{\tilde{M}}^l , \phi_{\tilde{M}}^m )
            .
        \]
        Thus, the discrete solution 
        \begin{align*}
            W 
            &=\left({w}^{1,1}, \ldots {w}^{1,{\tilde{M}}},{w}^{2,1}, \ldots , {w}^{{N^{d-1}},{\tilde{M}}}, u_{\tilde{M}}^{1}, \ldots, u_{\tilde{M}}^{{\tilde{M}}} \right)
            \\
            &= :\left(W_{1}, \ldots , W_{{N^{d-1}}}, U \right)
            \in \CC^{({N^{d-1}+1) \times {\tilde{M}}}}
            ,
        \end{align*}
        solves the linear system
        \begin{align*}
            \sum_{l=1}^{\tilde{M}} {w}^{n,l} a'_{\alpha_N^n} (\phi_{\tilde{M}}^l , \phi_{\tilde{M}}^m ) 
            + \sum_{l=1}^{\tilde{M}} u_{\tilde{M}}^{l} b_N^n (\phi_{\tilde{M}}^l , \phi_{\tilde{M}}^m ) 
            &
            = F_{m,n}
            &&\quad\text{for }
            m=1,\ldots,{\tilde{M}}
            ,
            \ 
            n=1,\ldots,N^{d-1}
            ,
            \\
            u_{\tilde{M}}^{m} - \frac{1}{N^{d-1}}\sum_{n=1}^{N^{d-1}} e^{- \ii \alpha_N^{n} \cdot x^{m}} {w}^{n,m}
            &=0
            &&\quad\text{for }
            m=1,\ldots,{\tilde{M}}
            ,
        \end{align*}
        where for $n=1,\ldots,N^{d-1}$ and $m=1,\ldots,{\tilde{M}}$ the discrete right hand side is defined by
        \[
            F_{m,n}
            := N^{d-1}\int_{I^n_N}\int_{\Omega_0^R} \J f(\alpha, \cdot)\phi_{\tilde{M}}^m  \dd x \dd \alpha
            .
        \]
        The matrix representation is given by
        \[
            \left( 
                \begin{array}{rr}
                    A       & B        \\
                    C     & I_{\tilde{M}}
                \end{array}
            \right)
            W
            := 
            \left( 
                \begin{array}{rrrrrr}
                    A_1     & 0         & \ldots            & 0         & B_1        \\
                    0       & A_2       & \ldots            & 0         & B_2        \\
                    \vdots  & \vdots    & \ddots            & \vdots    & \vdots     \\
                    0       & 0         & \ldots            & A_{N^{d-1}}       & B_{N^{d-1}}        \\
                    C_1     & C_2       & \ldots            & C_{N^{d-1}}       & I_{\tilde{M}}
                \end{array}
            \right)
            \left( 
                \begin{array}{r}
                    W_1     \\
                    W_2     \\
                    \vdots  \\
                    W_{N^{d-1}}   \\
                    U
                \end{array}
            \right)
            = 
            \left( 
                \begin{array}{r}
                    F_1     \\
                    F_2     \\
                    \vdots  \\
                    F_{N^{d-1}}   \\
                    0
                \end{array}
            \right)
            =: F
            ,
        \]
        where $F_n := \left(F_{1,n}, \ldots , F_{\tilde{M},n} \right)$ and the matrices $A_n$, $B_n$ and $C_n$ are defined as
        \begin{align*}
            A_n(m, l) 
            &= a'_{\alpha_N^n} (\phi_{\tilde{M}}^l , \phi_{\tilde{M}}^m )
            \\
            B_n(m, l) 
            &= b_N^n (\phi_{\tilde{M}}^l , \phi_{\tilde{M}}^m )
            \\
            C_n
            &= \frac{-1}{N^{d-1}} \operatorname{diag} \left(
            	e^{-\ii \alpha_N^{n} \cdot \tx^{1}}
            	, \ldots, 
            	e^{-\ii \alpha_N^{n} \cdot \tx^{{\tilde{M}}}}
            	 \right)
            .
        \end{align*}
        
        The error analysis is out of scope of this paper, we refer the reader to \cite{LechleiterZhang2017} and \cite{LechleiterZhang2017b}. But we note that considering \Cref{thm_regularity_perturbed} we can actually improve the convergence rate of the discrete inverse of the Bloch-Floquet operator, if the right hand side is smooth enough. For that, one has to find a variable transform $g : I_{} \to I_{}$, such that the integrand of
        \[
            \int_{I_{}} w(\alpha, x) \dd \alpha
            = \int_{I_{}} w(g(t), x)\ |\det D g(t)| \dd t
        \]
        is a smooth and periodic function on $I_{}$. It is well-known that the trapezoidal rule is converging very fast in the case of smooth periodic functions. 
        For $d=2$, one can choose a function $g \in C^{\infty}(I_{}; I_{})$, such that $g'(\hat{t}) \geq 0$ and all of the derivatives $\frac{d}{d t^m} g(\hat{t})$, $m \in \N$, vanish at $\hat{t}$, where $g(\hat{t}) = \hat{\alpha}$ holds. In this case, one gets convergence of order $\mathcal{O}(N^{-n} + 2^{-2\times M})$ for some $n \in \N_0$ w.r.t.\ the $L^2(\Omega_0^R)$-norm, if the right hand side is smooth enough (see \cite{Zhang2018} for details).

        The unperturbed system matrix $A$ is a block diagonal matrix consisting of the blocks $A_i$, $i=1,\ldots,N^{d-1}$. This emphasizes to invert the matrix block-wise using {GMRES} and the incomplete LU-decomposition for every block as the preconditioner. Furthermore, this allows the distribution of block-wise inversion tasks over a cluster of computers using \emph{Message Passing Interface} (MPI). To utilize the special structure of A, we first solve the Schur complement for $U$
        \[
            \left[I_M - B A^{-1} C \right] U = -  C A^{-1} F
            ,
        \]
        and in a second step, we solve the equation $A W = F - BU$.
    }
	
	\subsection{Regularization by inexact Newton method}
	{
        In this section, we summarize the regularization scheme for the problem. 
        Since the image spaces of the operators $\Lambda$ and $\mathcal{S}$ are not Hilbert spaces, we adjust these operators first.
        For that, we discretize $L^2(\Omega_0^{R_0})$ by the linear span of $N_f$ nodal functions $\{f_{R, m}, f_{I, m} \}_{m=1}^{N_f}$, which are locally constant with the value of either zero or $1$ in the case of $f_{R, m}$, or, $\ii$ in the case of $f_{I, m}$. All of the functions for the real part $\{f_{R, m}\}_{m=1}^{N_f}$ and all of the functions imaginary part $\{f_{I, m}\}_{m=1}^{N_f}$ are chosen to have disjoint support, such that it holds $\sum_{m=1}^{N_f}f_{R, m} =1$ and $\sum_{m=1}^{N_f}  f_{I, m} =\ii$.
        We define a modified operator $\tilde{\Lambda} : Q \to L^2(\Omega^R_0)^{2 \times N_f}$ of $\Lambda : Q \to \mathcal{L}(L^2(\Omega_0^{R_0}), L^2(\Omega^R_0))$, where a perturbation $q \in Q$ is mapped to the $2 \times N_f$ solutions of the variational problem \ref{prob_Var1} for the corresponding right hand sides. Analogously, we define $\tilde{\mathcal{S}} : Q \to L^2(\Gamma^R_0)^{2 \times N_f}$, which maps the perturbation to the traces of these solutions. Since both operators $\tilde{\Lambda}$ and $\tilde{\mathcal{S}}$ map between Hilbert spaces, we can apply the regularization method CG-REGINN.
        
        Assume now that
        \begin{align}\label{eq_SinverseProblem}
            \tilde{\Lambda} q^+
            &= U^+
            &&\in \tilde{H}^1(\Omega_0^R)^{2 \times N_f}
            &\quad\text{for }
            &q^+ \in Q \subseteq L^2(\Omega_0^{R_0})
            ,
            \\
            \nonumber
            \tilde{\mathcal{S}} q^+
            &= U^+\big|_{\Gamma_0^R}
            &&\in  H^{\nicefrac{1}{2}}(\Gamma_0^R)^{2 \times N_f}
            &\quad\text{for }
            &q^+ \in Q \subseteq L^2(\Omega_0^{R_0})
            .
        \end{align}
        We briefly summarize the regularization scheme {CG-REGINN} (``REGularization based on INexact Newton iteration'') stated and analyzed by Rieder in \cite{Rieder2005},
        which we propose for the inversion. We will only consider the first inverse problem in \eqref{eq_SinverseProblem} for the summary.
   
        We have given the noisy version $U^{\varepsilon}$ of the exact measurement $U^+$ with the relative noise level $\varepsilon \in (0, 1)$, i.e., $||U^{\varepsilon} - U^+|| \leq \varepsilon ||U^+|| \approx \varepsilon ||U^{\varepsilon}||$, which we assume to know a-priori.
        The algorithm generates a sequence $\{q_m \}_{m \in \N_0} \subseteq Q$ of approximations of $q^+$, starting with the initial guess $q_0 \in Q$. If we write $q^+ = q_m + s_m^+$ for each $m \in \N_0$, the best update $s_m^+$ solves the linearized problem
        \[
            \tilde{\Lambda}'(q_m)[s_m^+]
            = U^+ - \tilde{\Lambda}(q_m) - E(q^+, q_m)
            =: b_m^+
            ,
        \]
        where $E(q^+, q_m)$ is the linearization error. Since we do not know the linearization error, we only know the perturbed right hand side $b_m^\varepsilon := U^{\varepsilon} - \tilde{\Lambda}(q_m)$ with the upper bound $||b_m^\varepsilon - b_m^+|| \leq \varepsilon ||U^{\varepsilon}|| + \mathcal{O}(||q^+ - q_m||)$  for the noise level.
        
        {CG-REGINN} applies the regularization method of conjugate gradients (CG) for the linearized problem and stops, when the relative linear residuum is smaller than a tolerance times the non-linear residuum. CG creates an inner iteration that computes a sequence of approximations $\{s_{m,i} \}_{i \in \N_0}$ of $s^+_m$. The inner loop is terminated, when $||\tilde{\Lambda}'(q_m)[s_{m,i}] - b_m^\varepsilon|| < \mu_m ||b_m^\varepsilon||$ for a tolerance $\mu_m \in (0,1)$ is satisfied for the first time, which index we call $i_m$. Then, we use backtracking, to get $\tilde{s}_{m,i_m} := \beta \tilde{s}_{m,i_m} + (1-\beta) \tilde{s}_{m,i_m-1}$, where $\beta \in [0,1]$ is chosen, such that $||\tilde{S}'(q_m)[\tilde{s}_{m,i}] - b_m^\varepsilon|| = \mu_m ||b_m^\varepsilon||$. We define the update as $s_{m} :=\tilde{s}_{m,i_m}$ and set $q_{m+1} = q_m + s_{m}$, until the discrepancy principle with $\tau > 0$ is satisfied for the outer loop.
        
        Considering the suggestion in \cite{Rieder2005}, we chose $\mu_1 = \mu_2 = \mu_{start} = 0.55$, $\gamma = 0.9$, $\mu_{max}=0.99$, and
        \[
            \mu_m
            = \mu_{max} \max\left\{\tau \varepsilon ||U^{\varepsilon}|| / ||U^{\varepsilon} - \tilde{\Lambda}(q_m)||
            ,\ 
            \tilde{\mu}_m
            \right\}
            ,
        \]
        where
        \[
            \tilde{\mu}_m
            := \left\{
                \begin{array}{ll}
                    1 - \frac{i_{m-2}}{i_{m-1}} (1 - \mu_{m-1}) 
                    ,
                    & \text{for } i_{m-1} > i_{m-2}
                    \\
                    \gamma \mu_{m-1}
                    ,                    
                    & \text{else}
                    .
                    \\
                \end{array}
            \right.
        \]
        Taking the results in \cite{Hansen2018} into account, we use the adjoint matrix of the discretized problem for the inner loop of the numerical reconstruction, instead of the discretization of the theoretical adjoint of the Fr\'echet derivative, to have a more stable inversion.
    }	
	\section{Numerical Examples}\label{Sec5}
    {
        In this section, we present some numerical results for the Bloch transform based method and the two inverse problems.
        We note at this point that the \textrm{deal.II} library does not support complex numbers, such that the values of the functions are considered as elements of $\R^2$. In this case, we get a system of two partial differential equations with some couplings, which double the number of degrees of freedom.
        \subsection{Example for the Bloch transform based method}
            For the first and second example, we choose $d=2$, $R=5$, $k=\sqrt{0.4}$, the cut-off of the Fourier expansion of the boundary for $|j| \leq 300$, and
            \[
                u_1 (x_1, x_2)
                = \exp({-\tfrac{1}{10}(x_1 - 1)^2 + \tfrac{1}{10}(x_2 - 5)^2}) \frac{x_2}{5}
            \]
            as well as
            \[
                u_2 (x_1, x_2)
                = \frac{\ii}{4} \left(H^{(1)}_0 (k(x_1^2 + (x_2+7)^2) ) - H^{(1)}_0  (k(x_1^2 + (x_2+9)^2) ) \right) \frac{x_2}{5}
            \]
            as the reference solutions. The Bloch-Floquet transformed function of the second solution $u_2$ can be approximated by
            \begin{align*}
                \JRand_{\R} (u_2)(\alpha, x_1, x_2)
                &=  \sum_{j \in \Z}  e^{- \ii m x_1 + \ii \sqrt{k^2 - (\alpha + m)^2}(x_2+8)} \operatorname{sinc}(\sqrt{k^2 - (\alpha + m)^2}) 
                \\
                &\approx \sum_{m = -100}^{100} e^{- \ii m x_1 + \ii \sqrt{k^2 - (\alpha + m)^2}(x_2+8)} \operatorname{sinc}(\sqrt{k^2 - (\alpha + m)^2})
                ,
            \end{align*}
            where $\operatorname{sinc}(t) := \nicefrac{\operatorname{sin}(t)}{t}$ is a smooth function. The transformed function of $u_1$, we simply approximate by
            \begin{align*}
                \JRand_{\R} (u_1)(\alpha, x_1, x_2)
                &= \sum_{j \in \Z} \exp({-\tfrac{1}{10}(x_1 - 1 + 2 \pi j)^2 + \tfrac{1}{10}(x_2 - 5)^2}) \frac{x_2}{5} e^{2 \pi \ii \alpha (j + x)}
                \\
                &\approx \sum_{j = -30}^{30} \exp({-\tfrac{1}{10}(x_1 - 1 + 2 \pi j)^2 + \tfrac{1}{10}(x_2 - 5)^2}) \frac{x_2}{5} e^{2 \pi \ii \alpha (j + x)}
                ,
            \end{align*}
            since this function is decaying fast.
            Because of the extra factor $\nicefrac{x_2}{5}$, both do not satisfy the Neumann boundary condition, such that we add some correction factors $r_i$, $i=1,2$,
            \[
                r_i
                := 
                \frac{\partial }{\partial x_2} u_i - T(u_i)
                \text{ on }
                \R \times \{5\}
                \quad\text{for }
                i=1,2
                ,
            \]
            where $r_2$ can be simplified to
            \begin{equation*}
                r_2(x_1, 5) 
                = \frac{i}{20} \left(H^{(1)}_0 (k(x_1^2 + 144) ) - H^{(1)}_0  (k(x_1^2 + 196) ) \right)
                .
            \end{equation*}
            For the unperturbed refractive index, we take the function
            \[
                k^2 n^2_p
                =
                    \left\{
                        \begin{array}{ll}
                            0.8, & x \in ([\nicefrac{-3}{2}, \nicefrac{3}{2}] \times [0, \nicefrac{9}{2}] \cup [- \pi, \pi] \times [0, \nicefrac{7}{2}]) \setminus [-1, 1] \times [1, 3]\\
                            0.8 + 0.4\ii, & x \in [-1, 1] \times [1, 3] \\
                            1, & \, \textrm{else,} \\
                        \end{array}
                    \right.
            \]
            and the perturbation $k^2q$ is given by
            \[
                k^2 q
                =
                    \left\{
                        \begin{array}{ll}
                            2.2, & x \in [\nicefrac{-1}{2}, 1] \times [1, \nicefrac{7}{2}] \cup [- 2, 1] \times [1, 2]\\
                            0, & \, \textrm{else.} \\
                        \end{array}
                    \right.
            \]
            In \Cref{image_nSquareAndQ} both parameter are visualized. 
            We set $2^{2 \times M}$ as the number of cubic cells the domain is discretized in and $N$ the number of points for the discretization of the interval $I_{}$. Note that $2^{2 \times M}=\SI{65536}{}$ corresponds to $\SI{132098}{}$ degrees of freedom, since we have a system of two partial differential equations. The relative tolerance for GMRES is chosen to be $10^{-10}$. In \Cref{table1} and \Cref{table2}  one can see the relative $L^2(\Omega^R_0)$-errors for the two examples and in \Cref{table12} the computation time for Example 1 using three computers  ({Intel i7-4790}, $8 \times 3.6$GHz cores, $32$GB memory) in parallel. In both cases we use the variable transformation $g : I_{} \to I_{}$, which is defined as
            \[
                g(t) :=
                    \left\{
                        \begin{array}{ll}
                            \nicefrac{\phi_{\nicefrac{-1}{2}, \hat{k}}(t)}{\phi_{\nicefrac{-1}{2}, \hat{k}}(\hat{k})}, & t \in [\nicefrac{-1}{2}, -\hat{k}]
                            \\
                            \nicefrac{\psi_{-\hat{k}, \hat{k}}(t)}{\psi_{-\hat{k}, \hat{k}}(\hat{k})}, & t \in [-\hat{k}, \hat{k}]
                            \\
                            \nicefrac{\phi_{\nicefrac{1}{2}, \hat{k}}(t)}{\phi_{\nicefrac{1}{2}, \hat{k}}(\hat{k})}, & t \in [\hat{k}, \nicefrac{1}{2}]
                        \end{array}                        
                    \right.
                ,
            \]
            where $\hat{k} := |k - \lfloor k + 0.5\rfloor |$,
            \begin{equation*}
                \phi_{l, m}(t)
                := \int_l^t \exp\left({\frac{-(s - l)^2}{9 (s - m)^2} }\right) \dd s
                \text{ and }
                \psi_{l, m}(t)
                := \int_l^t \exp\left({\frac{-1}{9 (s - l)^2(s - m)^2} }\right) \dd s
                .
            \end{equation*}
            If we use the identity as variable transform in the case of Example 1, then the error would decrease faster for smaller $N$, since the function $\J u_1(\cdot, x)$ is $C_p^\infty(I_{})$ for every $x \in \Omega^R_0$. For $N=8$ we would already see near as good error values as for $N=256$ in \Cref{table1}. But in the case of the second example, the variable transform lets the error decrease much faster w.r.t.\ $\alpha$, since the second part of the decomposition  of $\J u_2$ shown in \Cref{thm_regularity_perturbed} does not vanish and the function has the square-root-line behavior.

            \begin{figure}[!ht]
                \begin{minipage}{\textwidth}
                    \hfill
                    \subfigure[Refractive index $\Re  k^2 n^2_p$.]{
                        \includegraphics[width=0.47\textwidth]{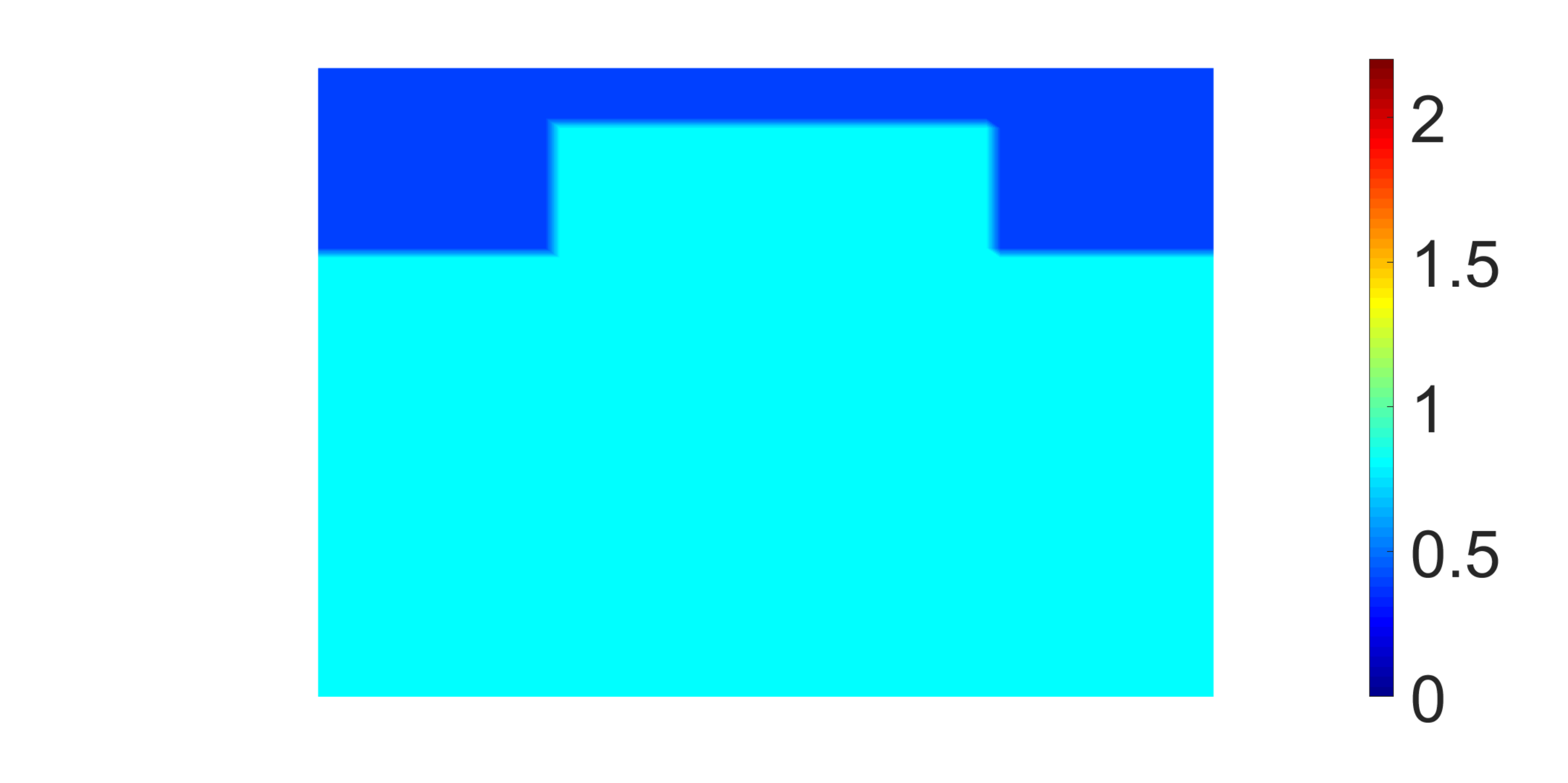}
                    }
                    \hfill
                    \subfigure[Refractive index $\Im  k^2 n^2_p$.]{
                        \includegraphics[width=0.47\textwidth]{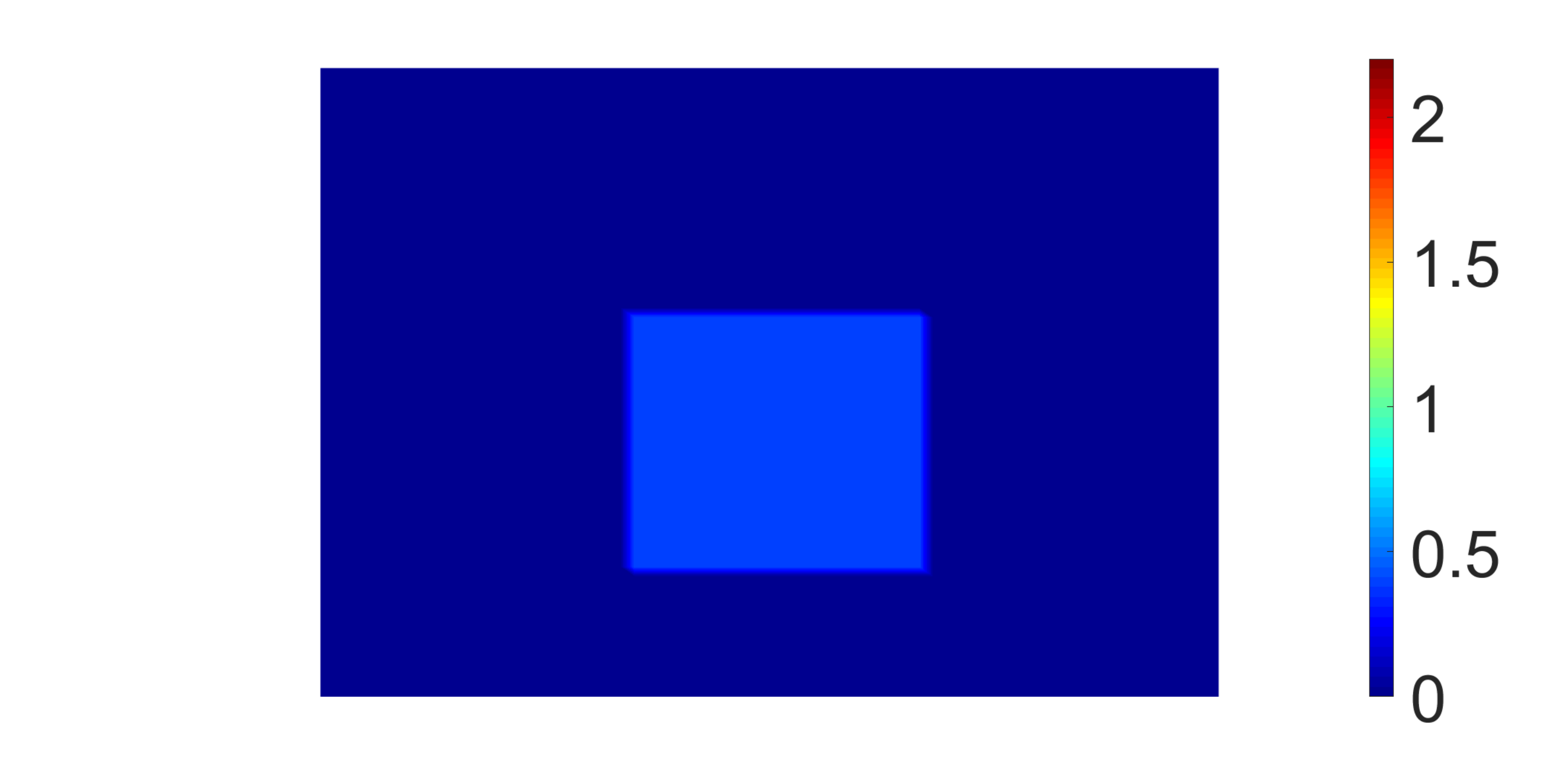}
                    }
                    \hfill
               \end{minipage}\\[1em]
               \begin{minipage}{\textwidth}
                    \hfill
                    \subfigure[Perturbation $\Re  k^2 q$.]{
                        \includegraphics[width=0.47\textwidth]{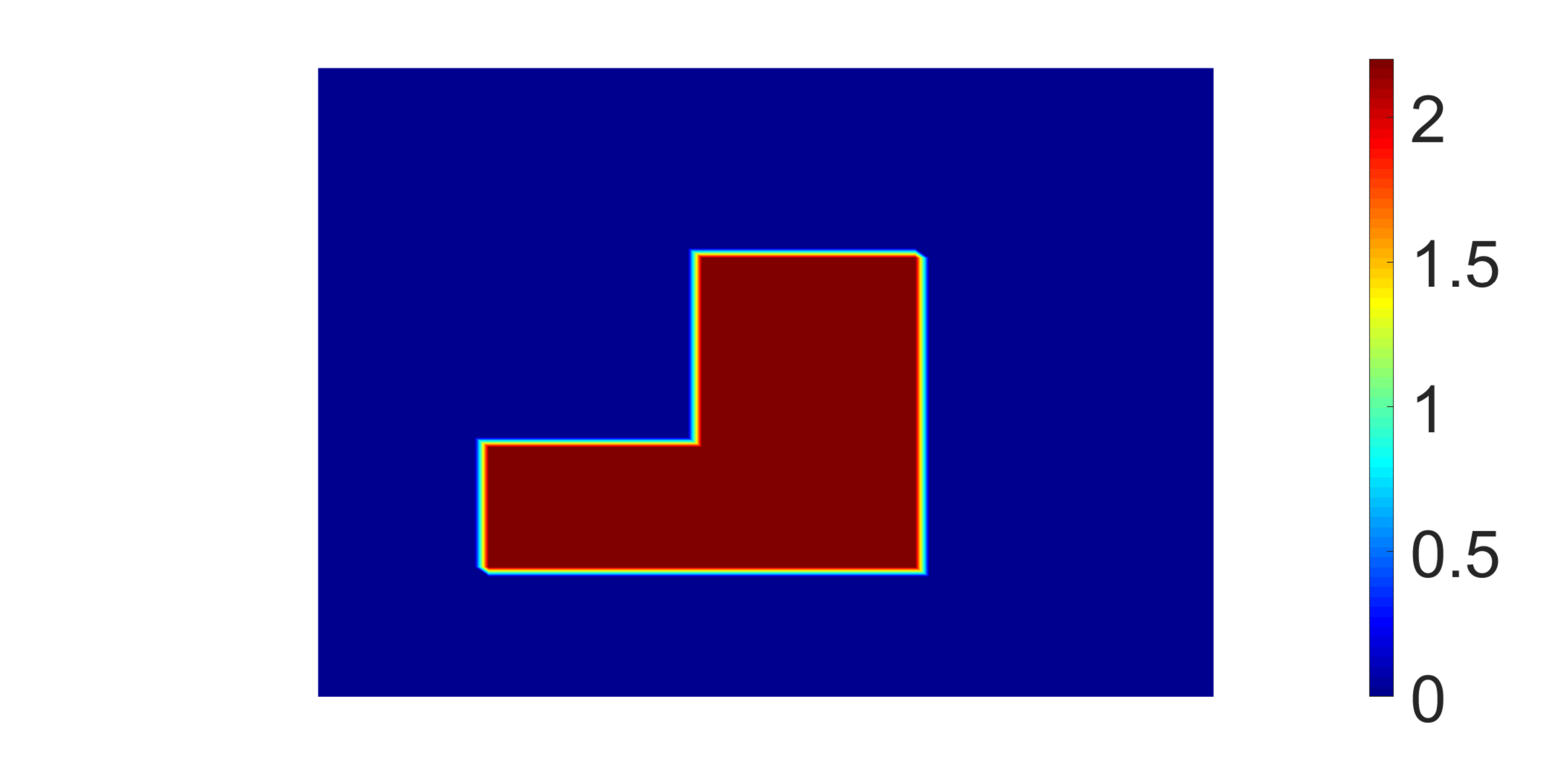}
                    }
                    \hfill
                    \subfigure[Perturbation $\Im  k^2 q$.]{
                        \includegraphics[width=0.47\textwidth]{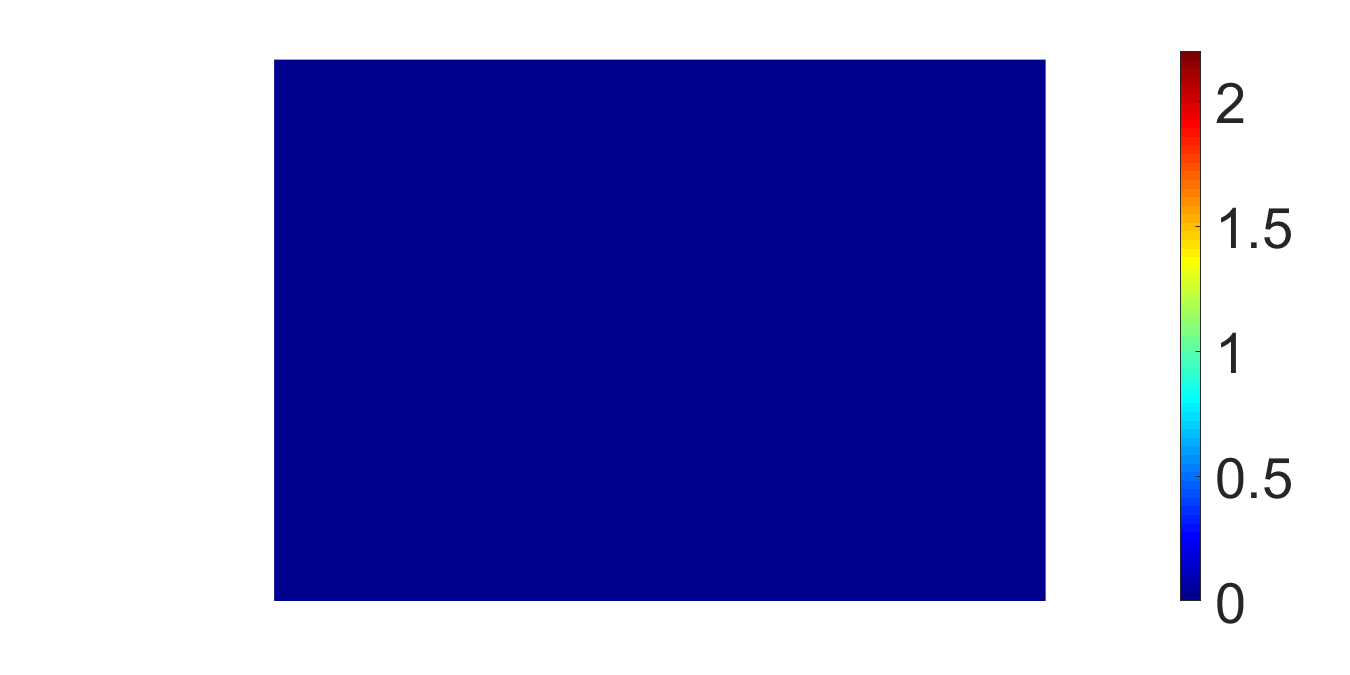}
                    }
                    \hfill
                    \caption{The refractive index and the perturbation for all 2D examples.}
                    \label{image_nSquareAndQ}
                \end{minipage}
            \end{figure}
            
            \begin{table}[H]
                \begin{center}
                    \begin{tabular}{|lr|c|c|c|c|c|c|c|} 
                        \hline
                                             & & $N=\textbf{8}$         & $ N=\textbf{16}$        & $ N=\textbf{32}$        & $ N=\textbf{64}$        & $ N=\textbf{128}$       & $ N=\textbf{256}$
                        \\
                        \hline
                           $2^{2 \times M}=$ & ${\textbf{256}}{}$                     & {6.210e-02}{}    & {1.741e-02}{}    & {1.964e-02}{}    & {1.825e-02}{}    & {1.826e-02}{}    & {1.826e-02}{}
                        \\
                          $2^{2 \times M}=$ & ${\textbf{1\,024}}{}$                     & {5.862e-02}{}    & {8.930e-03}{}    & {6.546e-03}{}    & {4.473e-03}{}    & {4.520e-03}{}    & {4.519e-03}{}
                        \\
                          $2^{2 \times M}=$ & ${\textbf{4\,096}}{}$                     & {5.901e-02}{}    & {9.473e-03}{}    & {3.871e-03}{}    & {1.088e-03}{}    & {1.128e-03}{}    & {1.127e-03}{}
                        \\
                         $2^{2 \times M}=$ & ${\textbf{16\,384}}{}$                     & {5.901e-02}{}    & {9.843e-03}{}    & {3.413e-03}{}    & {2.788e-04}{}    & {2.786e-04}{}    & {2.776e-04}{}
                        \\
                         $2^{2 \times M}=$ & ${\textbf{65\,536}}{}$                     & {5.894e-02}{}    & {9.910e-03}{}    & {3.309e-03}{}    & {1.662e-04}{}    & {6.866e-05}{}    & {6.847e-05}{}
                        \\
                        \hline
                    \end{tabular}
                    \captionof{table}{Relative $L^2(\Omega^R_0)$-error for Example 1.}
                    \label{table1}
                \end{center}
            \end{table}

            \begin{table}[H]
                \begin{center}
                    \begin{tabular}{|lr|r|r|r|r|r|r|r|} 
                        \hline
                                              && $N=\textbf{8}$         & $ N=\textbf{16}$        & $ N=\textbf{32}$        & $ N=\textbf{64}$        & $ N=\textbf{128}$       & $ N=\textbf{256}$
                        \\
                        \hline
                           $2^{2 \times M}=$ & ${\textbf{256}}{}$                       & $\SI{1}{}$         & $\SI{2}{}$         & $\SI{3}{}$         & \SI{6}{}         & $\SI{12}{}$        & $\SI{26}{}$
                        \\ 
                          $2^{2 \times M}=$ & ${\textbf{1\,024}}{}$                       & $\SI{1}{}$         & $\SI{2}{}$         & $\SI{4}{}$         & $\SI{7}{}$         & $\SI{14}{}$        & $\SI{28}{}$
                        \\ 
                          $2^{2 \times M}=$ & ${\textbf{4\,096}}{}$                       & $\SI{4}{}$         & $\SI{7}{}$         & $\SI{13}{}$        & $\SI{25}{}$        & $\SI{47}{}$        & $\SI{93}{}$
                        \\
                         $2^{2 \times M}=$ & ${\textbf{16\,384}}{}$                       & $\SI{36}{}$        & $\SI{70}{}$        & $\SI{139}{}$       & $\SI{265}{}$       & $\SI{539}{}$       & $\SI{1026}{}$
                        \\
                         $2^{2 \times M}=$ & ${\textbf{65\,536}}{}$                       & $\SI{643}{}$       & $\SI{1786}{}$      & $\SI{3337}{}$      & $\SI{6737}{}$      & $\SI{13724}{}$     & $\SI{27689}{}$
                        \\
                        \hline
                    \end{tabular}
                    \captionof{table}{Time in seconds for Example 1.}
                    \label{table12}
                \end{center}
            \end{table}

            \begin{table}[H]
                \begin{center}
                    \begin{tabular}{|lr|c|c|c|c|c|c|c|} 
                        \hline
                                              && $N=\textbf{8}$         & $ N=\textbf{16}$        & $ N=\textbf{32}$        & $ N=\textbf{64}$        & $ N=\textbf{128}$       & $ N=\textbf{256}$
                        \\
                        \hline
                           $ 2^{2 \times M}=$ & ${\textbf{256}}{}$                     & {3.558e-01}{}    & {5.420e-02}{}    & {2.157e-02}{}    & {8.410e-03}{}    & {8.729e-03}{}    & {8.722e-03}{}
                        \\
                          $ 2^{2 \times M}=$ & ${\textbf{1\,024}}{}$                     & {3.651e-01}{}    & {5.247e-02}{}    & {1.429e-02}{}    & {1.900e-03}{}    & {2.200e-03}{}    & {2.195e-03}{}
                        \\
                          $ 2^{2 \times M}=$ & ${\textbf{4\,096}}{}$                     & {3.631e-01}{}    & {5.221e-02}{}    & {1.257e-02}{}    & {4.486e-04}{}    & {5.352e-04}{}    & {5.305e-04}{}
                        \\
                         $ 2^{2 \times M}=$ & ${\textbf{16\,384}}{}$                     & {3.630e-01}{}    & {5.213e-02}{}    & {1.213e-02}{}    & {4.461e-04}{}    & {1.257e-04}{}    & {1.236e-04}{}
                        \\
                         $ 2^{2 \times M}=$ & ${\textbf{65\,536}}{}$                     & {3.644e-01}{}    & {5.195e-02}{}    & {1.203e-02}{}    & {5.003e-04}{}    & {4.726e-05}{}    & {5.279e-05}{}
                        \\
                        \hline
                    \end{tabular}
                    \captionof{table}{Relative $L^2(\Omega^R_0)$-error for Example 2.}
                    \label{table2}
                \end{center}
            \end{table}

        To show some three dimensional examples, we choose $d=3$, $R=5$, $k=\sqrt{0.4}$ and the Fourier expansion cut-off $|j| \leq 10$. For the reference solutions, we choose
        \[
            u_3 (x_1, x_2, x_3)
            = \exp({-\tfrac{1}{10}(x_1 - 1)^2-\tfrac{1}{10}(x_2 - 2)^2 + \tfrac{1}{10}(x_3 - 5)^2}) \frac{x_3}{5}
        \]
        and
        \[
            u_4 (x_1, x_2, x_3)
            = \left[\frac{e^{\ii k (x_1^2 + (x_2+7)^2)}}{4 \pi (x_1^2 + x_2^2 + (x_3+7)^2)}  - \frac{e^{\ii k (x_1^2 + x_2^2 + (x_3+9)^2)}}{4 \pi (x_1^2 + x_2^2 + (x_3+9)^2)} \right] \frac{x_3}{5}
            .
        \]
        We approximate the Bloch-Floquet transformed functions $\JRand_{\R^2} u_3$ and $\JRand_{\R^2} u_4$ by
        \[
            \JRand_{\R^2} (u_4)(\alpha, x_1, x_2)
            \approx \sum_{m \in \Z^2,\ |m|\leq 10} e^{- \ii m \cdot  \tx + \ii \sqrt{k^2 - (\alpha + m)^2}(x_3+8)} \operatorname{sinc}(\sqrt{k^2 - |\alpha + m|^2})
            ,
        \]
        or,
        \begin{align*}
            &\JRand_{\R^2} (u_3)(\alpha, x_1, x_2)
            \\
            &\approx \sum_{j  \in \Z^2,\ |j|\leq 10} \exp({-\tfrac{1}{10}(x_1 - 1 + 2 \pi j_1)^2-\tfrac{1}{10}(x_2 - 2 + 2 \pi j_2)^2 + \tfrac{1}{10}(x_3 - 5)^2}) \frac{x_3}{5} e^{2 \pi \ii \alpha \cdot (j + \tx)}
            ,
        \end{align*}
        respectively, and we add a correction term for the Neumann boundary condition.
            For the unperturbed refractive index, we choose the function
            \[
                k^2 n^2_p
                =
                    \left\{
                        \begin{array}{ll}
                            0.8, & x \in ([\nicefrac{-3}{2}, \nicefrac{3}{2}] \times [1, \pi] \times [0, \nicefrac{9}{2}] \cup [- \pi, \pi]^2 \times [0, \nicefrac{7}{2}]) \setminus [-1, 1]^2 \times [1, 3]\\
                            0.8 + 0.4\ii, & x \in [-1, 1]^2 \times [1, 3] \\
                            1, & \, \textrm{else,} \\
                        \end{array}
                    \right.
            \]
            and the perturbation $k^2q$ is given by
            \[
                k^2 q
                =
                    \left\{
                        \begin{array}{ll}
                            2.2, & x \in [\nicefrac{-1}{2}, 1] \times [0, 1] \times [1, \nicefrac{7}{2}] \cup [- 2, 1] \times [0, 1] \times [1, 2] \cup [\nicefrac{-1}{2}, 1] \times [\nicefrac{-5}{2}, 1] \times [1, 2]
                            \\
                            0, & \, \textrm{else.} \\
                        \end{array}
                    \right.
            \]
            Both parameter are visualized in \Cref{image_reconstruction3D}. The relative tolerance of GMRES is still chosen as $10^{-10}$, and we took the identity for the variable transform $g$ in both cases.
            \begin{table}[H]
                \begin{center}
                    \begin{tabular}{|lr|c|c|c|c|c|c|c|} 
                        \hline
                                              && $N^2=\textbf{16}$         & $ N^2=\textbf{64}$        & $ N^2=\textbf{256}$
                        \\
                        \hline
                           $4^{3 \times M}=$ & ${\textbf{8}}{}$                     & {7.534e-01}{}    & {7.591e-01}{}    & {7.590e-01}{}
                        \\
                          $4^{3 \times M}=$ & ${\textbf{64}}{}$                     & {4.807e-01}{}    & {5.256e-01}{}    & {5.260e-01}{}
                        \\
                          $4^{3 \times M}=$ & ${\textbf{512}}{}$                    & {1.216e-01}{}    & {8.489e-02}{}    & {1.498e-01}{}
                        \\
                         $4^{3 \times M}=$ & ${\textbf{4\,096}}{}$                  & {2.005e-02}{}    & {4.158e-02}{}    & {2.779e-02}{}
                        \\
                         $4^{3 \times M}=$ & ${\textbf{32\,768}}{}$                 & {6.150e-03}{}    & {9.793e-03}{}    & {6.611e-03}{}
                        \\
                         $4^{3 \times M}=$ & ${\textbf{262\,144}}{}$                & {1.577e-03}{}    & {2.450e-03}{}    & -
                        \\
                        \hline
                    \end{tabular}
                    \captionof{table}{Relative $L^2(\Omega^R_0)$-error for Example 3.}
                    \label{table3}
                \end{center}
            \end{table}
            \begin{table}[H]
                \begin{center}
                    \begin{tabular}{|lr|c|c|c|c|c|c|c|c|} 
                        \hline
                                              && $N^2=\textbf{16}$         & $ N^2=\textbf{64}$        & $ N^2=\textbf{256}$        & $ N^2=\textbf{1\,024}$        & $ N^2=\textbf{4\,096}$
                        \\
                        \hline
                           $4^{3 \times M}=$ & ${\textbf{8}}{}$                     & {3.656e-01}{}    & {3.775e-01}{}    & {3.775e-01}{}    & {3.502e-01}{}    & {3.490e-01}{}
                        \\
                          $4^{3 \times M}=$ & ${\textbf{64}}{}$                     & {4.769e-01}{}    & {1.017e-00}{}    & {5.350e-01}{}    & {4.858e-01}{}    & {4.718e-01}{}
                        \\
                          $4^{3 \times M}=$ & ${\textbf{512}}{}$                    & {6.182e-01}{}    & {1.480e-01}{}    & {4.499e-02}{}    & {5.636e-02}{}    & {4.387e-02}{}
                        \\
                         $4^{3 \times M}=$ & ${\textbf{4\,096}}{}$                  & {5.965e-01}{}    & {1.018e-01}{}    & {1.708e-02}{}    & {1.926e-02}{}    & {8.951e-03}{}
                        \\
                         $4^{3 \times M}=$ & ${\textbf{32\,768}}{}$                 & {5.959e-01}{}    & {8.270e-02}{}    & {1.716e-02}{}    & {1.330e-02}{}    & {-}{}
                        \\
                        \hline
                    \end{tabular}
                    \captionof{table}{Relative $L^2(\Omega^R_0)$-error for Example 4.}
                    \label{table4}
                \end{center}
            \end{table}
        
        \subsection{Examples for the reconstruction of the perturbation}
            In this subsection, we give the results, if we reconstruct the perturbation of the periodic refractive index, both shown in \Cref{image_nSquareAndQ}.
            To generate the data, we use the algorithm from above, and refine some of the parameter, such that, we have $\SI{16384}{}$ cells for $\Omega^R_0$, $128$ subintervals of $I_{}$ and with the cut-off of the Fourier expansion of the boundary is $|j| \leq 600$. After that, we interpolate the solution down to $\SI{4096}{}$ cells for $\Omega^R_0$, put some unified distributed noise of $5\%$ on it, and use this as the given data.
            For the reconstruction, we use $\SI{4096}{}$  cells for $\Omega^R_0$, $N=64$, and a cut-off of $300$.
            For the right hand sides, we choose $R_0 = 4.5$, and split the domain $\Omega_0^{R_0}$ into $16$ equal parts. We approximate the $L^2(\Omega_0^{R_0})$ space with $32$ local constant functions $\{f_{R, m}, f_{I, m} \}_{m=1}^{16}$, which are locally constant on the every part with the value of either zero or $1$ in the case of $f_{R, m}$, or,  $\ii$ in the other case, and such that it holds $\sum_{m=1}^{16}f_{R, m} =1$, $\sum_{m=1}^{16}  f_{I, m} =\ii$.
            
            We approximate the perturbation $k^2q$ as a function in the finite element space, which is spanned by the finite elements $\{\phi_{\tilde{M}}^m\}_{m=1}^{\tilde{M}}$ in \eqref{eq_FiniElemRaum}, and stop the outer iteration of REGINN by the discrepancy principle, when the relative discrepancy is smaller than $1.2 \times 0.05$. In \Cref{image_reconstruction} one can see the result of the reconstruction, where relative $L^2(\Omega_0^{^R})$ reconstruction error is about 
            $38.0087\%$
            in the case of $\tilde{\Lambda}$, and  a reconstruction error of 
            $57.1368\%$ 
            in the case of $\tilde{\mathcal{S}}$. The results for the inversion of $\tilde{\Lambda}$ are much better, since it has more data given to work with.
            Furthermore, the quality of the reconstruction  depends highly on the size and the value of the absorption area. 
            The bigger the set $\{\Im n^2_p > 0\}$ and the value inside is, the more the error of the reconstruction decreases.
            \begin{figure}[!ht]
                \begin{minipage}{\textwidth}
                    \hfill
                    \subfigure[Reconstruction of  $\Re  k^2 q$ in the case of $\tilde{\Lambda}$.]{
                        \includegraphics[width=0.47\textwidth]{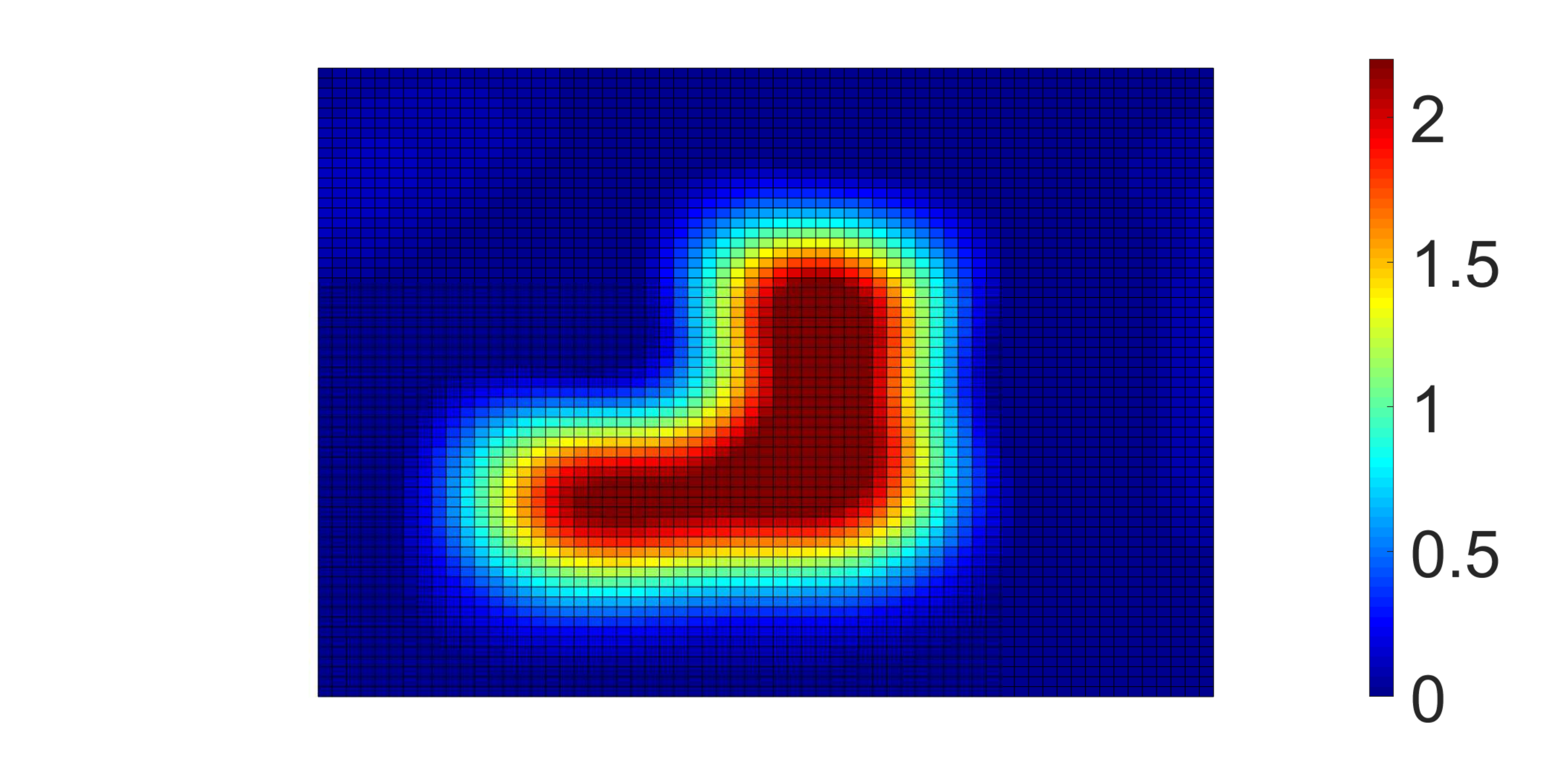}
                    }
                    \hfill
                    \subfigure[Reconstruction of  $\Re k^2 q$ in the case of $\tilde{\mathcal{S}}$.]{
                        \includegraphics[width=0.47\textwidth]{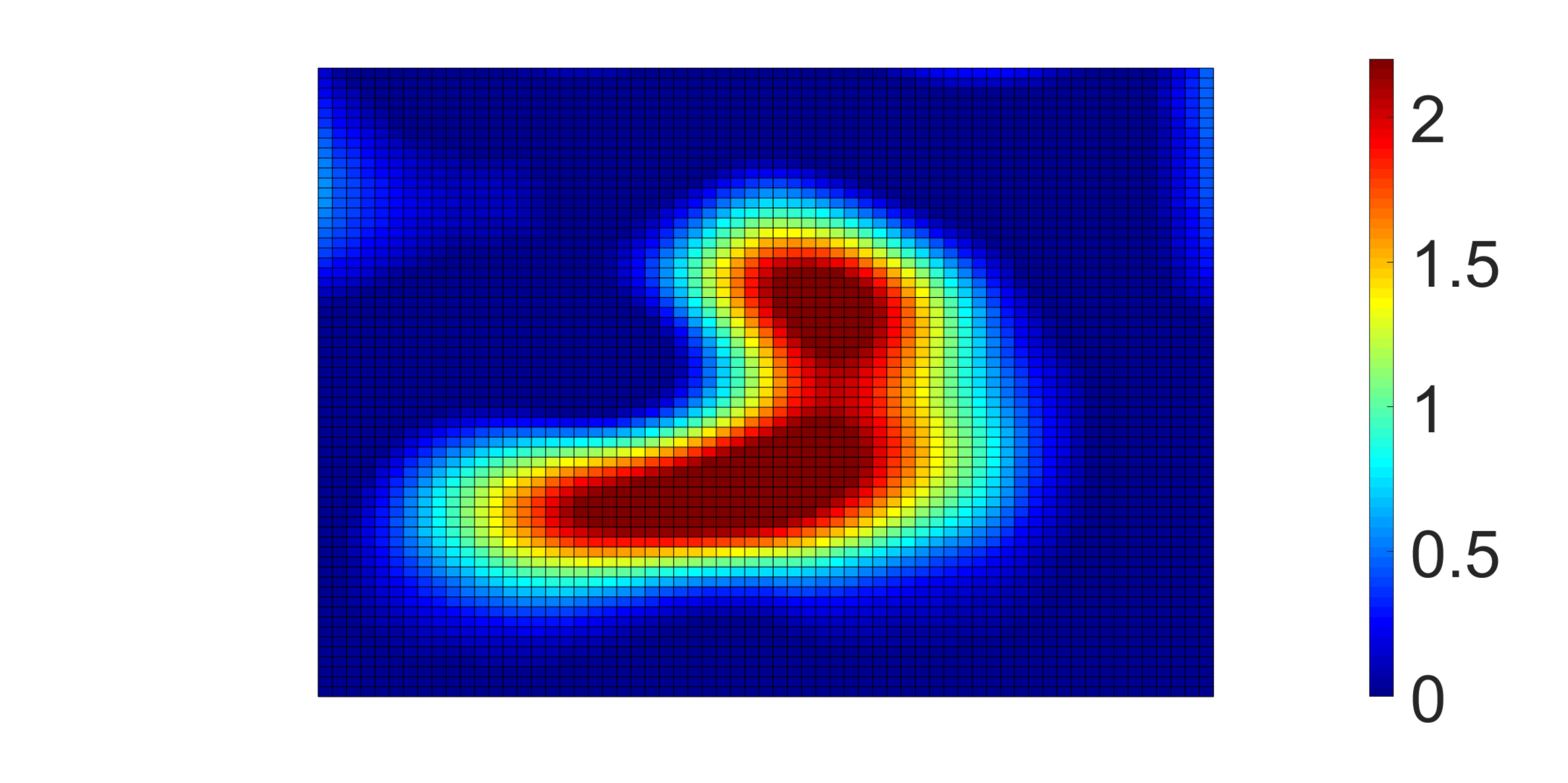}
                    }
                    \hfill
               \end{minipage}\\[1em]
               \begin{minipage}{\textwidth}
                    \hfill
                    \subfigure[Reconstruction of  $\Im  k^2 q$ in the case of $\tilde{\Lambda}$.]{
                        \includegraphics[width=0.47\textwidth]{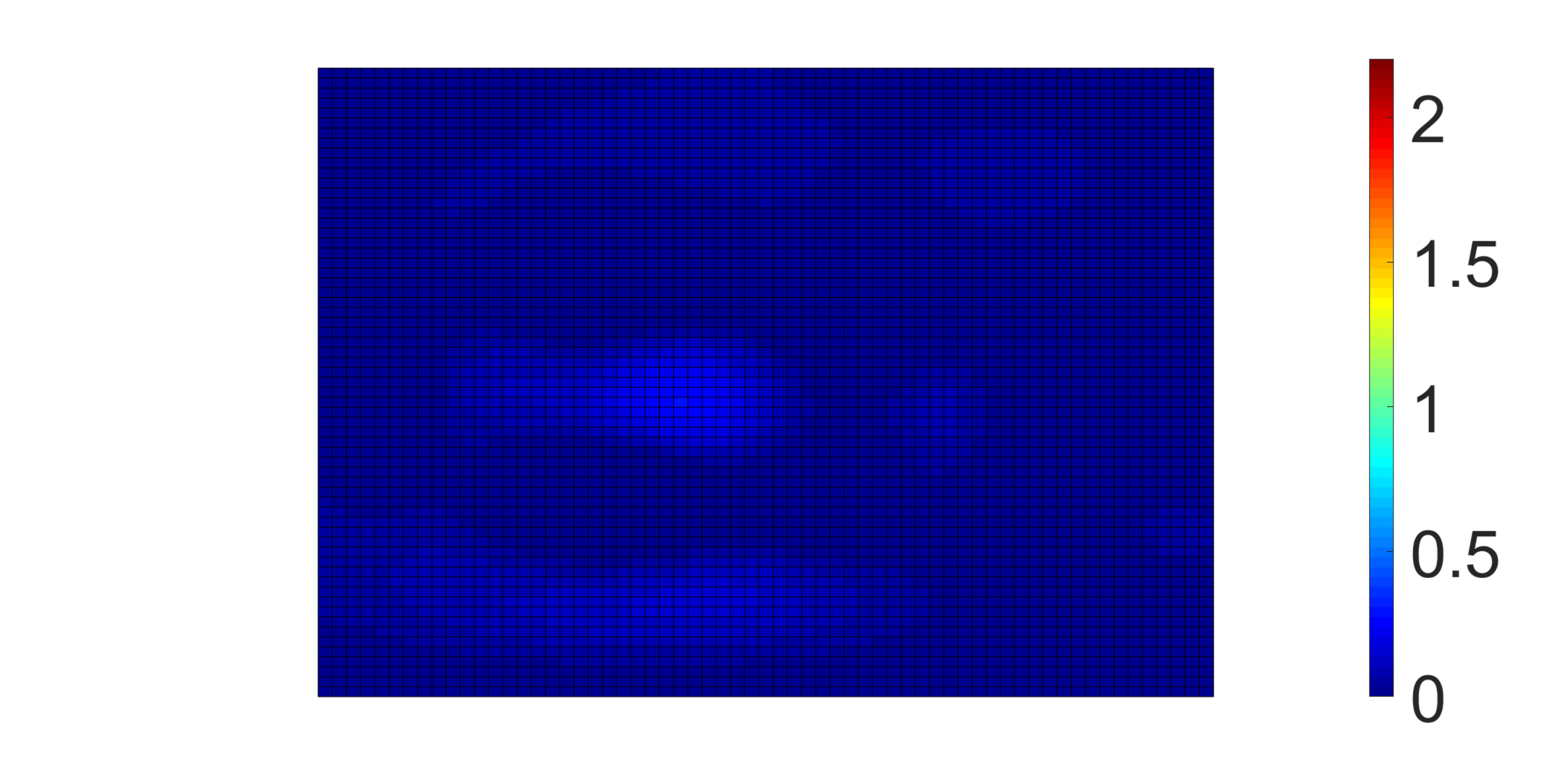}
                    }
                    \hfill
                    \subfigure[Reconstruction of  $\Im  k^2 q$ in the case of $\tilde{\mathcal{S}}$.]{
                        \includegraphics[width=0.47\textwidth]{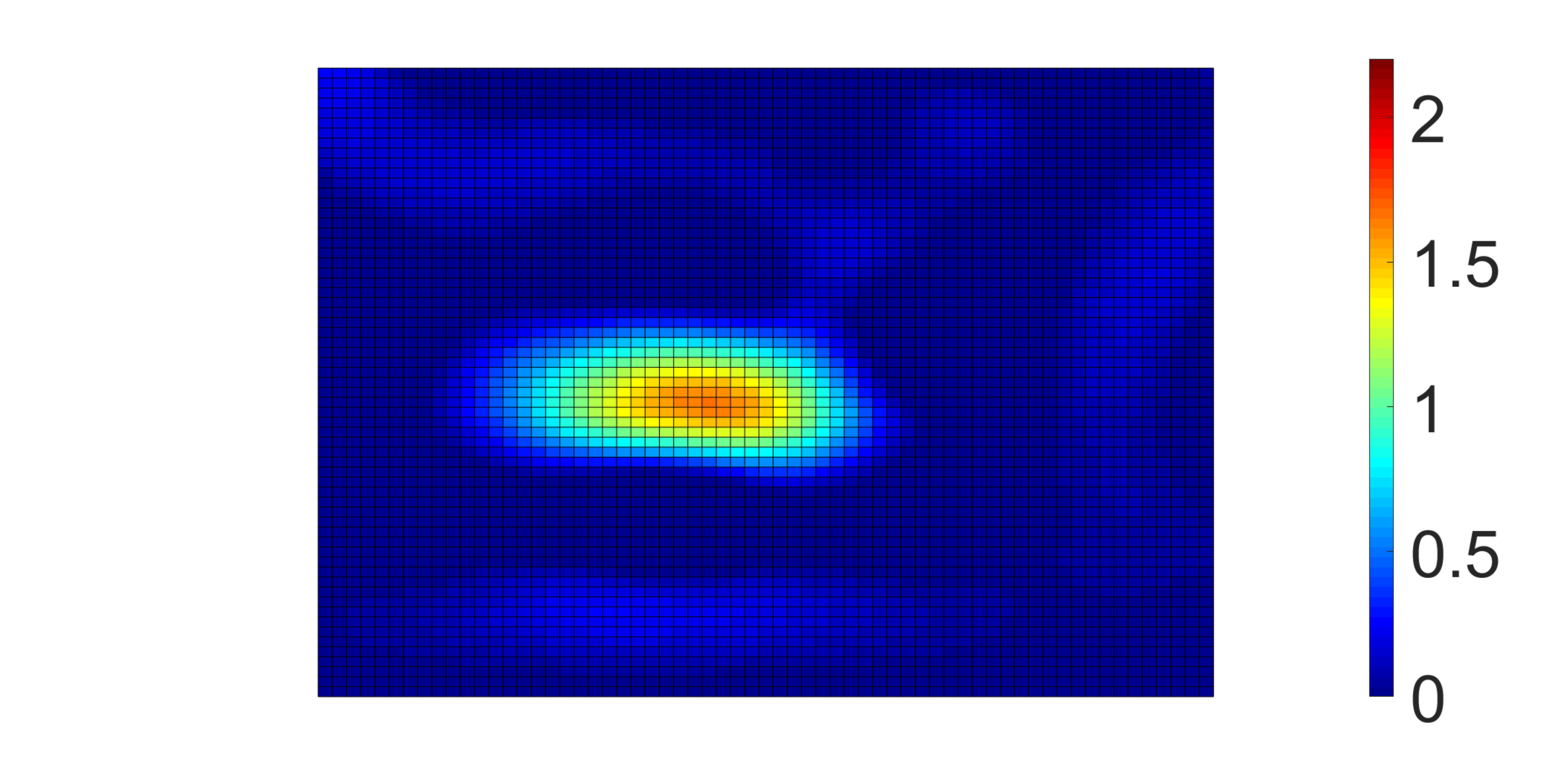}
                    }
                    \hfill
                    \caption{Reconstruction for both measurement operators $\tilde{\Lambda}$ and $\tilde{\mathcal{S}}$ ($d=2$).}
                    \label{image_reconstruction}
                \end{minipage}
            \end{figure}

            For the three dimensional example, we use $4096$ cells for $\Omega^R_0$, $256$ cells of $I_{}$ and a cut-off for the Rayleigh boundary condition of $|j| \leq 30$, $j \in \Z^2$. For the right hand side, we split the domain $\Omega_0^{R_0}$ into $8$ cubes, and for the data, we added $5 \%$ of unified distributed noise. 
            The relative reconstruction error in the case of $\tilde{\Lambda}$ is about $51.3783\%$ (compare \Cref{image_reconstruction3D}).
            
            \begin{figure}[!ht]
                \begin{minipage}{\textwidth}
                    \hspace*{0.1\textwidth}
                    \hfill
                    \subfigure[Exact refractive index $\Re  k^2 n_p^2$.]{
                        \includegraphics[width=0.3\textwidth]{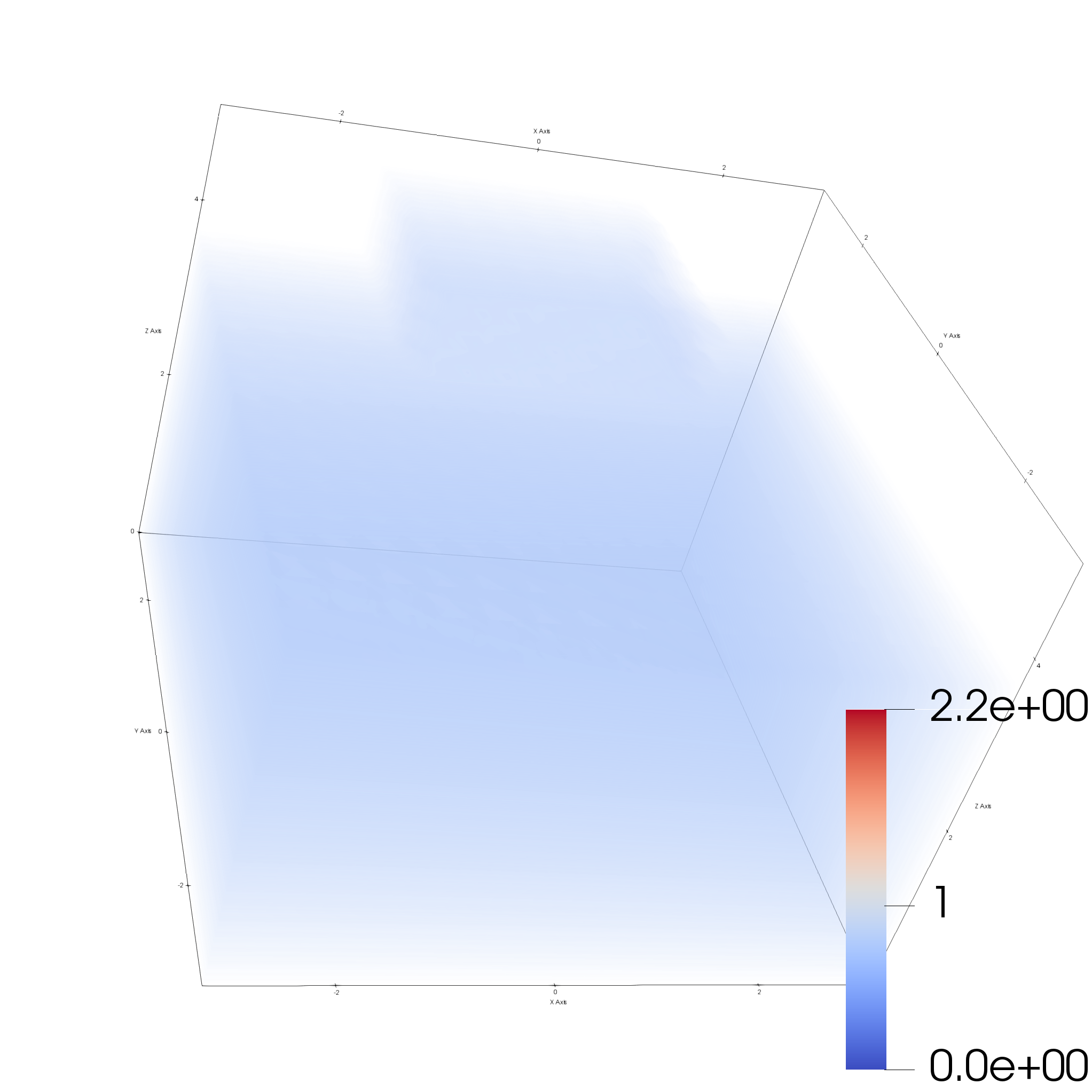}
                    }
                    \hspace*{0.1\textwidth}
                    \subfigure[Exact refractive index $\Im  k^2 n_p^2$.]{
                        \includegraphics[width=0.3\textwidth]{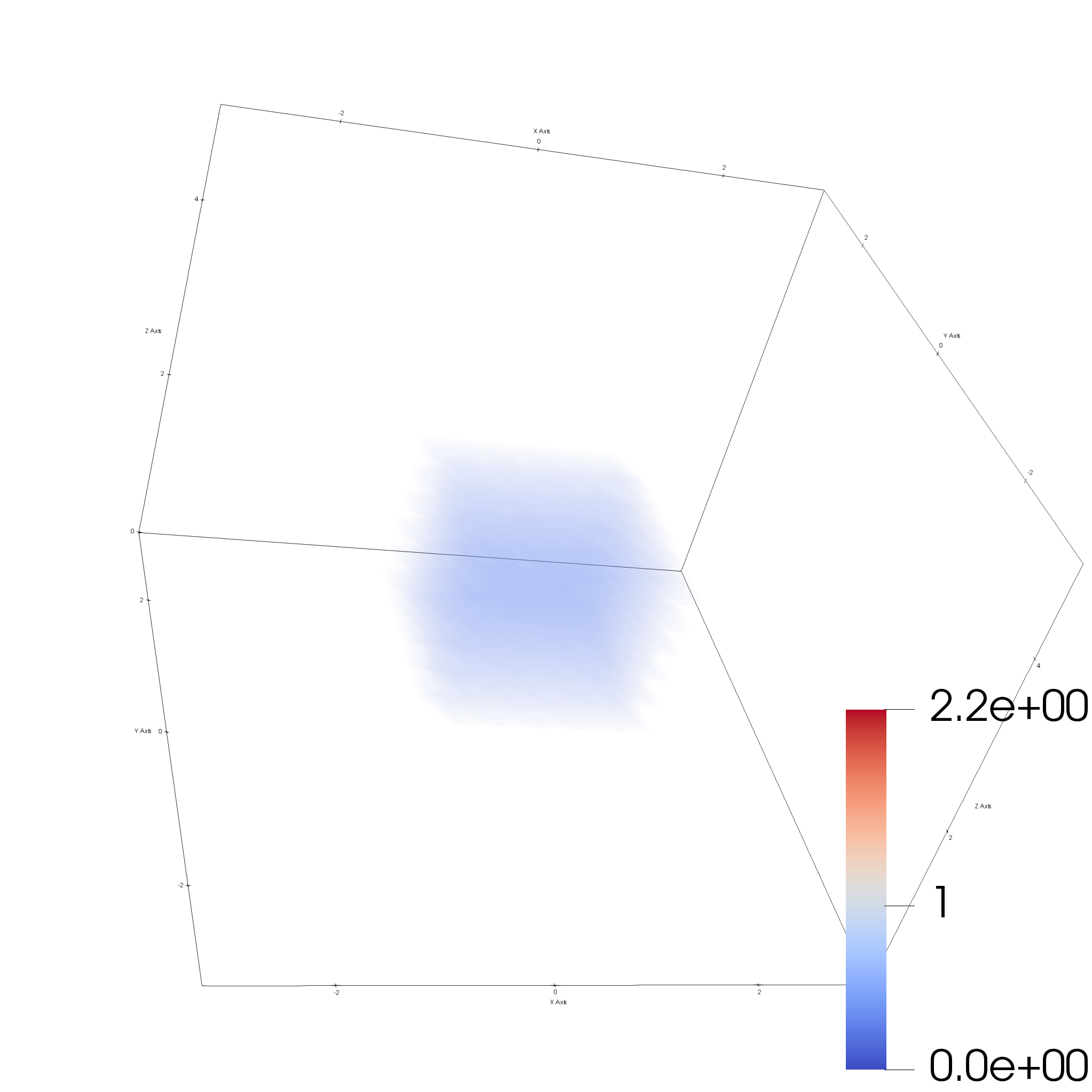}
                    }
                    \hfill
                    \hspace*{0.1\textwidth}
               \end{minipage}\\[1em]
               \begin{minipage}{\textwidth}
                    \hspace*{0.1\textwidth}
                    \hfill
                    \subfigure[Exact perturbation $\Re k^2 q$.]{
                        \includegraphics[width=0.3\textwidth]{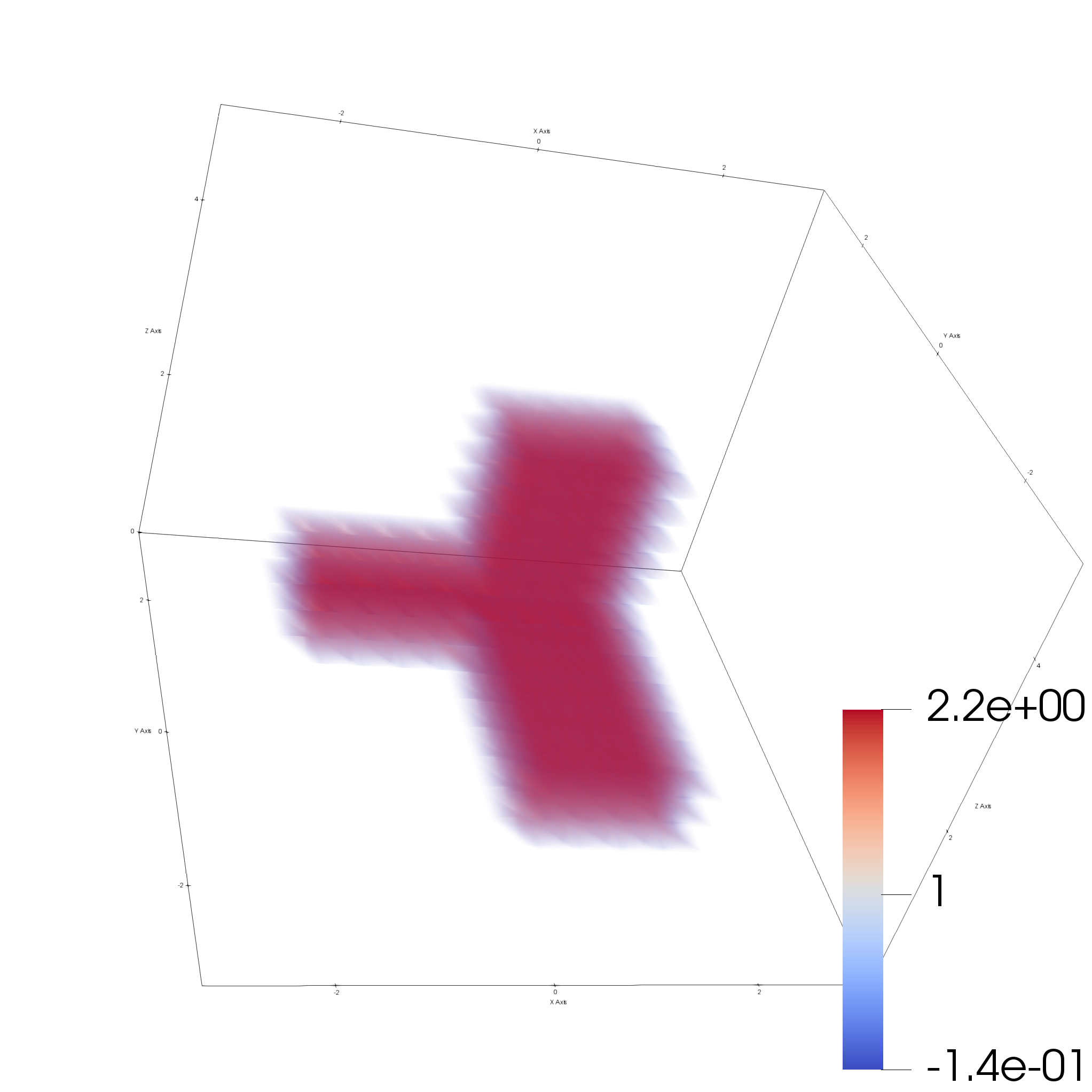}
                    }
                    \hspace*{0.1\textwidth}
                    \subfigure[Reconstruction of $\Re k^2 q$ in the case of $\tilde{\Lambda}$.]{
                        \includegraphics[width=0.3\textwidth]{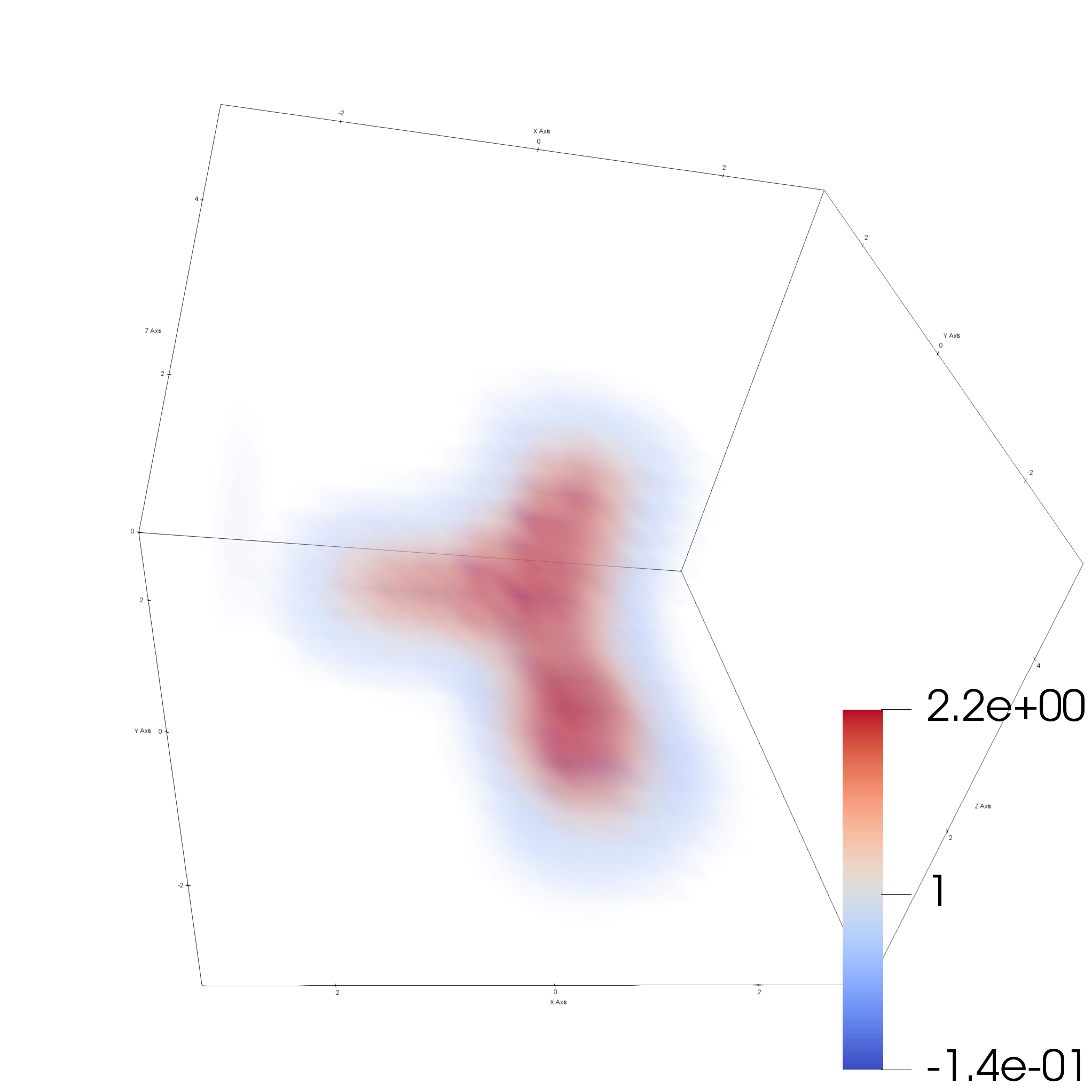}
                    }
                    \hfill
                    \hspace*{0.1\textwidth}
                    \caption{Reconstruction for $d=3$.}
                    \label{image_reconstruction3D}
                \end{minipage}
            \end{figure}
    }
    
    \paragraph{\textbf{Acknowledgement}}
    The first author is very grateful for the devoted and generous support of Armin Lechleiter during his master's and PhD program, who, although no longer with us, continues to inspire by his example and dedication to mathematics and teaching.
    
    This project was funded by the Deutsche Forschungsgemeinschaft (DFG, German Research Foundation) - Projektnummer 281474342/GRK2224/1.

    \printbibliography

\end{document}